\documentclass[11pt]{amsart}

\usepackage{amssymb,amsmath,amscd,stmaryrd}
\usepackage{hyperref,MnSymbol,enumitem,graphicx}

\newcommand{\A}{\mathbb{A}}
\newcommand{\Q}{\mathbb{Q}}
\newcommand{\R}{\mathbb{R}}
\newcommand{\Z}{\mathbb{Z}}
\newcommand{\F}{\mathbb{F}}
\newcommand{\p}{\mathfrak p}
\newcommand{\q}{\mathfrak q}
\renewcommand{\P}{\mathfrak P}
\newcommand{\calA}{\mathcal A}
\newcommand{\calB}{\mathcal B}
\newcommand{\calC}{\mathcal C}
\newcommand{\calE}{\mathcal E}
\newcommand{\calF}{\mathcal F}
\newcommand{\calG}{\mathcal G}
\newcommand{\calH}{\mathcal H}
\newcommand{\calI}{\mathcal I}
\newcommand{\calO}{\mathcal O}
\newcommand{\calR}{\mathcal R}
\newcommand{\calX}{\mathcal X}
\newcommand{\calW}{\mathcal W}

\renewcommand{\to}{\rightarrow}
\newcommand{\into}{\hookrightarrow}
\newcommand{\onto}{\twoheadrightarrow}
\newcommand{\disc}{\operatorname{disc}}

\renewcommand{\Im}{\operatorname{Im}} 
\newcommand{\Gal}{\operatorname{Gal}}
\newcommand{\Jac}{\operatorname{Jac}}
\newcommand{\Res}{\operatorname{Res}} 
\newcommand{\Spec}{\operatorname{Spec}} 
\newcommand{\Ann}{\operatorname{Ann}}
\newcommand{\ord}{\operatorname{ord}}

\setlength{\tabcolsep}{7pt}

\newtheorem{thm}{Theorem}[section]
\newtheorem{lem}[thm]{Lemma}
\newtheorem{prop}[thm]{Proposition}
\newtheorem{cor}[thm]{Corollary}
\theoremstyle{definition}
\newtheorem{rem}[thm]{Remark}
\newtheorem{prob}[thm]{Problem}

\newtheorem{notation}[thm]{Notation}
\numberwithin{equation}{section}

\title[Galois groups of dynatomic polynomials]{Galois groups in a family of dynatomic polynomials}

\author{David Krumm}
\address{Department of Mathematics and Statistics\\
Colby College}
\email{david.krumm@colby.edu}
\urladdr{http://colby.edu/\textasciitilde dkrumm/}

\begin{document}
\maketitle
\begin{abstract}
For every nonconstant polynomial $f\in\Q[x]$, let $\Phi_{4,f}$ denote the fourth dynatomic polynomial of $f$. We determine here the structure of the Galois group and the degrees of the irreducible factors of $\Phi_{4,f}$ for every quadratic polynomial $f$. As an application we prove new results related to a uniform boundedness conjecture of Morton and Silverman. In particular we show that if $f$ is a quadratic polynomial, then, for more than $39\%$ of all primes $p$, $f$ does not have a point of period four in $\Q_p$.
\end{abstract}

\section{Introduction}

Let $f\in\Q[x]$ be a nonconstant polynomial. For every positive integer $n$, let $f^n$ denote the $n$-fold composition of $f$ with itself. An algebraic number $\alpha$ is called \textit{periodic} under iteration of $f$ if there exists $n\ge 1$ such that $f^n(\alpha)=\alpha$; in that case the least such $n$ is called the \textit{period} of $\alpha$.

A fundamental conjecture of Morton and Silverman \cite{morton-silverman} would imply that as $f$ varies over all polynomials of fixed degree $d>1$, the possible periods of rational numbers under iteration of $f$ remain bounded. In \cite{poonen_preperiodic} Poonen studied the case $d=2$ and conjectured that no quadratic polynomial over $\Q$ has a rational point of period greater than 3.

A useful construction for studying these conjectures is that of a dynatomic polynomial. For every nonconstant polynomial $f\in\Q[x]$ and every positive integer $n$, the $n^{\text{th}}$ \textit{dynatomic polynomial} of $f$ is defined by the equation
\begin{equation}\label{dynatomic_poly_defn}
\Phi_{n,f}(x)=\prod_{d|n}\left(f^d(x)-x\right)^{\mu(n/d)},
\end{equation}
where $\mu$ is the M\"{o}bius function. The key property that motivates this definition is that the roots of $\Phi_{n,f}$ are precisely the algebraic numbers having period $n$ under iteration of $f$, except in rare cases when some roots may have period smaller than $n$; see \cite[Thm. 2.4]{morton-patel} for further details. By studying algebraic properties of the dynatomic polynomials of $f$ one may thus hope to gain information about the dynamical properties of $f$ as a map $\bar\Q\to\bar\Q$. With this in mind, let us now focus on the case of quadratic polynomials and discuss the following problem, which we consider to be especially important for understanding the dynamics of such maps.

\begin{prob}\label{dynatomic_problems}
Given $n\ge 1$, determine all possible groups that can arise as the Galois group of $\Phi_{n,f}$ for some quadratic polynomial $f\in\Q[x]$. Furthermore, determine all possible factorization types\footnote{By the \textit{factorization type} of a polynomial $F\in\Q[x]$ we mean the multiset of degrees of irreducible factors of $F$.} of $\Phi_{n,f}$ that can arise as $f$ varies over all quadratic polynomials.
\end{prob}

This problem is easily solved for $n=1$ and 2. The case $n=3$ is substantially harder, but was solved by Morton \cite{morton_period3}. The purpose of this article is to treat the case $n=4$. At present there appears to be no published work concerning the structure of the Galois groups of the polynomials $\Phi_{n,f}$ in this case. Regarding factorization types we are only aware of two results in the literature: Morton \cite[Thm. 4]{morton_period4} showed that $\Phi_{4,f}$ can never have a factor of degree 1, and Panraksa \cite[Thm. 2.3.5]{panraksa_thesis} showed that $\Phi_{4,f}$ cannot have four or more irreducible quadratic factors.

In order to state our results for $n=4$ we introduce some notation. For every quadratic polynomial $f\in\Q[x]$ there exist a unique rational number $c$ and a unique linear polynomial $\mathit{l}\in\Q[x]$ such that
\[\mathit{l}\circ f\circ\mathit{l}^{-1}=\phi_c(x):=x^2+c.\]

The polynomials $f$ and $\phi_c$ share all the properties we are concerned with in this article; in particular, their dynatomic polynomials factor in the same way and have the same Galois group (see \cite[\S 2.2]{krumm_lgp}). In stating our results we may therefore restrict attention to the family of polynomials $\{\phi_c(x):c\in\Q\}$. To ease notation we will write $\Phi_{4,c}$ instead of $\Phi_{4,\phi_c}$.

Let $\lambda, \eta, \rho\in\Q(z)$ be the rational functions defined by
\begin{equation}\label{rational_fncn_def}
\lambda(z)=\frac{z^2 + 2z - 4}{8z},\;\;\;\eta(z)=\frac{4-3z-z^3}{4z},\;\;\;\rho(z)=\frac{1+4z^3-z^6}{4z^2(z^2-1)}.
\end{equation}

The set of all numbers of the form $\lambda(r)$ with $r\in\Q\setminus\{0\}$ will be denoted by $\Im \lambda$. Similarly, we define sets $\Im \eta$ and $\Im \rho$. We can now state our main result.

\begin{thm}\label{phi4_factorization_galois_intro_thm}
Let $c\in\Q\setminus \left\{ -5, -\frac{5}{2}, -\frac{155}{72}, -2, -\frac{5}{4}, 0, \frac{19}{16}\right\}$. Let $G_c$ and $\calF_c$, respectively, denote the Galois group and factorization type of $\Phi_{4,c}$. Referring to the groups $\calW,\calG,\calH$, and $\calI$ defined in Appendix \ref{galois_data_appendix}, we have the following:
\begin{enumerate}
\item If $c\notin\Im(\lambda)$ and $c\notin\Im(\eta)$, then $G_{c}\cong\calW$ and $\calF_c=\{12\}$.
\item If $c\in\Im(\lambda)$, then $G_c\cong \calG$ and $\calF_c=\{12\}$.
\item Suppose that $c\in\Im(\eta)$. Then the following hold:
\begin{enumerate}
\item If $c\not\in\Im(\rho)$, then $G_c\cong\calH$ and $\calF_c=\{8,4\}$.
\item If $c\in\Im(\rho)$, then $G_c\cong\calI$ and $\calF_c=\{8,2,2\}$.
\end{enumerate}
\end{enumerate}
\end{thm}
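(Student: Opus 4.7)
\emph{Outline of proof.} The plan is to study $\Phi_{4,c}$ first generically over $\Q(c)$ and then classify which specializations $c_0\in\Q$ cause the Galois group to drop below the generic one.

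\textbf{Generic Galois group.} Viewing $c$ as transcendental, I would first compute the generic group $\calW^*:=\Gal(\Phi_{4,c}/\Q(c))$. Since $\Phi_{4,c}$ has degree $12$ and its roots are permuted by $\phi_c$ in three disjoint $4$-cycles, $\calW^*$ embeds into the wreath product $C_4\wr S_3$ of order $384$. To pin down $\calW^*$ exactly I would combine Morton's irreducibility of $\Phi_{4,c}$ over $\Q(c)$ with cycle-type information obtained by reducing a few well-chosen specializations modulo small primes; this should identify $\calW^*$ with the group $\calW$ defined in Appendix \ref{galois_data_appendix}. By Hilbert's irreducibility theorem, for $c\in\Q$ outside a thin set we then have $G_c\cong\calW$ and $\calF_c=\{12\}$, giving case (1) in the generic situation.

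\textbf{Parametric families of drops.} I would then enumerate the proper subgroups of $\calW$ that can arise as $G_c$ for infinitely many $c$. Each such subgroup $H$ corresponds to a finite cover of the $c$-line --- a dynatomic modular cover --- whose rational points parametrize the $c\in\Q$ with $G_c\leq H$. Three of these covers should turn out to have genus zero with a $\Q$-rational point, hence be unirationally parametrized; the resulting parametrizations are, I expect, precisely the rational functions $\lambda,\eta,\rho$ of \eqref{rational_fncn_def}. Concretely, $\Im(\eta)$ should parametrize the locus where $\Phi_{4,c}$ acquires an irreducible degree-$4$ factor (a $\Q$-rational orbit of size $4$ in the Galois action on period-$4$ points), giving case (3); $\Im(\rho)$ should refine this to the sublocus where the complementary octic splits off two quadratic factors, giving case (3b); and $\Im(\lambda)$ should parametrize the vanishing of a quadratic character on $\calW$ whose kernel is $\calG$, giving case (2). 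In each case the factorization type $\calF_c$ reads off directly from the orbit structure of the identified group acting on the twelve roots.

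\textbf{Completeness and sporadic values.} The hard step is to show that no other maximal-subgroup drop can occur for infinitely many $c$, and that the only sporadic exceptions are $\{-5,-5/2,-155/72,-2,-5/4,0,19/16\}$. For each remaining subgroup of $\calW$ I would compute the genus of its associated dynatomic modular cover and bound its $\Q$-rational points; for the positive-genus covers this would require Jacobian rank bounds combined with Chabauty's method or a Mordell--Weil sieve. The sporadic points that survive these computations should correspond to $c_0$ at which $\Phi_{4,c_0}$ degenerates (for instance $c_0\in\{0,-2\}$, where roots of smaller period appear) or at which two of the parametric conditions above coincide.

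\textbf{Main obstacle.} The principal difficulty is completeness: proving that no unanticipated parametric family of drops exists. This forces a full analysis of the subgroup lattice of $\calW$ together with a rigorous rational-point computation on each associated cover, and it is this multi-curve rational-points analysis --- rather than any single resolvent or specialization calculation --- that makes the $n=4$ case substantially harder than Morton's treatment of $n=3$.
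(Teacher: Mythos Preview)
Your outline is correct and follows essentially the same strategy as the paper: compute the generic Galois group $\calW$, associate to each relevant subgroup a fixed-field curve over the $c$-line, and classify the drops by determining the rational points on these curves (the genus-zero ones yielding $\lambda,\eta,\rho$, the higher-genus ones handled by Chabauty and descent). Two small corrections: in case (3b) it is the quartic factor that splits as $2+2$, not the octic; and the paper proceeds hierarchically rather than flatly---first the five maximal subgroups of $\calW$ (two giving the rational curves for $\lambda$ and $\eta$, the others contributing only sporadic or no points), then the maximal subgroups of $M_1\cong\calG$ and $M_2\cong\calH$, with $\rho$ arising at this second level from a maximal subgroup $B_7\cong\calI$ of $M_2$.
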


For the numbers $c$ excluded in the above theorem, the factorization types $\calF_c$ and isomorphism types of $G_c$ are given in Table \ref{excluded_gc_table}.

In addition to solving Problem \ref{dynatomic_problems} for $n=4$, Theorem \ref{phi4_factorization_galois_intro_thm} provides a way to improve on earlier results. As mentioned above, Morton showed that there is no $c\in\Q$ such that $\phi_c$ has a rational point of period 4. We can obtain this theorem as a consequence of Theorem \ref{phi4_factorization_galois_intro_thm}, as it is simply the observation that $1\notin\calF_c$. Morton's theorem was strengthened by the author in \cite{krumm_lgp}, where it is shown that for every quadratic polynomial $f$ there exist infinitely many primes $p$ such that $f$ does not have a point of period four in $\Q_p$. By using the information about Galois groups afforded by Theorem \ref{phi4_factorization_galois_intro_thm}, we can now prove the following stronger statement.

\begin{thm}\label{period_percentage_thm}
Let $f\in\Q[x]$ be a quadratic polynomial. Then, for more than $39\%$ of all primes $p$ (in the sense of Dirichlet density), $f$ does not have a point of period four in $\Q_p$.
\end{thm}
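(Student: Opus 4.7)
Since every quadratic $f\in\Q[x]$ is conjugate over $\Q$ to some $\phi_c$ by a linear polynomial $\mathit{l}\in\Q[x]$, and since such a conjugation sends $\Q_p$-points of period four bijectively to $\Q_p$-points of period four, it suffices to prove the theorem for $f=\phi_c$. Observe that if $\phi_c$ has a $\Q_p$-point of period four then $\Phi_{4,c}$ has a root in $\Q_p$; hence the set of primes in the statement contains the set of primes $p$ for which $\Phi_{4,c}$ has no root in $\Q_p$, and it will be enough to bound the Dirichlet density of this latter set from below.

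The tool for this is Chebotarev's density theorem. Let $K_c$ be the splitting field of $\Phi_{4,c}$, and regard $G_c=\Gal(K_c/\Q)$ as a permutation group on the 12-element root set $R$. For every prime $p$ not dividing $\disc(\Phi_{4,c})$ and unramified in $K_c$---which excludes only finitely many primes---Hensel's lemma shows that $\Phi_{4,c}$ has a root in $\Q_p$ if and only if it has a root mod $p$, which in turn happens if and only if the Frobenius class at $p$ contains an element of $G_c$ with a fixed point in $R$. By Chebotarev, the Dirichlet density of primes $p$ for which $\Phi_{4,c}$ has no root in $\Q_p$ therefore equals the proportion of elements of $G_c$ acting without fixed points on $R$.

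By Theorem~\ref{phi4_factorization_galois_intro_thm}, together with Table~\ref{excluded_gc_table} for the excluded values of $c$, the pair $(G_c,R)$---that is, the group $G_c$ together with its permutation action on $R$, whose orbit structure is prescribed by $\calF_c$---belongs to a finite explicit list. The plan is then to compute, for each pair on this list, the proportion of fixed-point-free permutations, and to verify that the minimum such proportion strictly exceeds $0.39$. Taking the infimum across the list yields the theorem.

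The main obstacle is this case-by-case verification. The four generic groups $\calW,\calG,\calH,\calI$ and the finitely many exceptional Galois groups of Table~\ref{excluded_gc_table} are all explicit permutation groups on 12 points, so the fixed-point-free proportion in each is a routine finite calculation in a computer algebra system. The dynamical structure---$\phi_c$ partitions its period-four points into three $4$-cycles, so $G_c$ embeds in the wreath product $C_4\wr S_3$---ensures that a typical element of $G_c$ is a product of $4$-cycles and hence acts without fixed points, making a bound above $39\%$ reasonable to expect; pinning down the exact infimum across the list, and identifying which Galois groups realize it, is the delicate point.
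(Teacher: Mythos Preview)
Your proposal is correct and follows exactly the paper's approach: reduce to $\phi_c$, use the classification of $G_c$ as a permutation group on the roots (note that the paper relies on the permutation-equivalence statement $G_c\equiv\calW,\calG,\calH,\calI$ of Theorem~\ref{phi4_factorization_galois_thm}, not merely the abstract isomorphism of Theorem~\ref{phi4_factorization_galois_intro_thm}, since the fixed-point-free count depends on the action), and apply Chebotarev to compute the proportion of fixed-point-free elements in each case. The paper carries out the numerical step you leave as routine in Theorem~\ref{phi4_density_thm} and Table~\ref{density_table}, finding $\max_c\delta(S_c)=39/64$ (attained for $c\in\Im\rho$ and at $c=-5/4$), so that $1-\delta(S_c)\ge 25/64>0.39$.
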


This article is organized as follows. In \S\ref{dynatomic_section} we review the necessary background material on dynatomic polynomials. In \S\ref{hit_section} we develop the main tools used in the proof of Theorem \ref{phi4_factorization_galois_intro_thm}. In \S\ref{phi4_factorization_galois_section} we prove this theorem and then deduce Theorem \ref{period_percentage_thm}. Finally, in \S\ref{rational_points_section} we determine the rational points on a number of hyperelliptic curves that arise at various stages of the proof.

\section{Dynatomic polynomials}\label{dynatomic_section}

Let $K$ be a field of characteristic $0$ and let $f\in K[x]$ be a nonconstant polynomial. For every positive integer $n$, the $n^{\text{th}}$ dynatomic polynomial of $f$ is defined by the formula \eqref{dynatomic_poly_defn}.

Two special cases of this definition are especially relevant here. First, the case where $K=\Q$ and $f$ is a polynomial of the form $\phi_c(x)=x^2+c$. In this case the polynomial $\Phi_{n,f}\in\Q[x]$ will be denoted $\Phi_{n,c}(x)$. Second, the case where $K$ is the rational function field $\Q(t)$ and $f(x)=x^2+t$. In this case the polynomial $\Phi_{n,f}\in\Q(t)[x]$ will be denoted $\Phi_n(t,x)$. By a basic property of dynatomic polynomials (see \cite[Thm. 3.1]{morton-patel}), for every $c\in\Q$ the specialization $\Phi_n(c,x)$ is equal to the $n^{\text{th}}$ dynatomic polynomial of $\phi_c$. Hence $\Phi_n(c,x)=\Phi_{n,c}(x)$.

The Galois group and factorization type of the polynomial $\Phi_n(t,x)$ are well understood. Indeed, Bousch \cite[Chap. 3]{bousch} showed that $\Phi_n(t,x)$ is irreducible in $\mathbb C(t)[x]$ and determined its Galois group over $\mathbb C(t)$. It follows from Bousch's results that $\Phi_n(t,x)$ is irreducible in $\Q(t)[x]$ and its Galois group over $\Q(t)$ is isomorphic to the wreath product of $\Z/n\Z$ with the symmetric group $S_r$, where $r=(\deg\Phi_n)/n$. Thus, the problem of classifying the Galois groups and factorization types of the polynomials $\Phi_{n,c}$ for $c\in\Q$ can be regarded as a special case of the following more general problem.
\begin{prob} Given a polynomial $P\in\Q[t,x]$ with known Galois group and factorization type, determine the Galois group and factorization type of every specialization $P(c,x)$ with $c\in\Q$.
\end{prob}
In the next section we discuss a technique that can be used to solve this problem under certain assumptions on the polynomial $P$.

\section{Explicit Hilbert Irreducibility}\label{hit_section}

Let $P\in\Q[t,x]$ be a polynomial of degree $n\ge 1$ in the variable $x$. Regarding $P$ as an element of the ring $\Q(t)[x]$, let $\ell(t)$ be its leading coefficient, $\Delta(t)$ its discriminant, and $\calF(P)$ its factorization type, i.e., the multiset of degrees (in $x$) of the irreducible factors of $P$. Let $N$ be a splitting field for $P$ and let $G=\Gal(N/\Q(t))$ be the Galois group of $P$.

For every rational number $c$ we let $P_c=P(c,x)$ and we denote by $G_c$ and $\calF(P_c)$, respectively, the Galois group and factorization type of $P_c$. By Hilbert's Irreducibility Theorem \cite[Prop. 3.3.5]{serre_topics}, for every number $c$ outside of a thin subset of $\Q$ we have $G_c\cong G$ and $\calF(P_c)=\calF(P)$. We define the \textit{exceptional set} of $P$, denoted $\calE(P)$, to be the set of rational numbers $c$ for which either one of these conditions fails:
\[\calE(P)=\{c\in\Q : G_c\not\cong G\;\;\text{or}\;\;\calF(P_c)\ne\calF(P)\}.\]

The purpose of this section is to develop a technique that can be used to explicitly describe the set $\calE(P)$ and to determine $G_c$ and $\calF(P_c)$ for every $c\in\calE(P)$, especially in the case where $P$ is irreducible. For a more general treatment of this subject we refer the reader to the article \cite{krumm-sutherland}.

Let $A\subset\Q(t)$ be the ring $A=\Q[t][\ell(t)^{-1}]$ and let $\calO_N$ be the integral closure of $A$ in $N$. If $\P$ is a maximal ideal of $\calO_N$ (henceforth referred to as a \textit{prime} of $N$), we denote by $G_{\P}$ the decomposition group of $\P$ over $\Q(t)$.

For all $c\in\Q$ satisfying $\ell(c)\ne 0$, let $\p_c$ be the kernel of the evaluation homomorphism $A\to\Q$ given by $a(t)\mapsto a(c)$.

Throughout this paper we will use the following notation to distinguish between an isomorphism of groups and an isomorphism of groups with an action.

\begin{notation}[Isomorphism of group actions]
Suppose that the groups $G$ and $H$ act on sets $X$ and $Y$, respectively. We write $G\equiv H$ if there exist an isomorphism $\varphi:G\to H$ and a bijection $\sigma:X\to Y$ such that $\sigma(g\cdot x)=\varphi(g)\cdot\sigma(x)$ for all $g\in G$ and all $x\in X$.
\end{notation}

\begin{lem}\label{HIT_lem}
Suppose that $c\in\Q$ satisfies $\Delta(c)\cdot\ell(c)\ne 0$. Then the following hold:
\begin{enumerate}
\item\label{decomposition_group_iso} Let $\P$ be a prime of $N$ dividing $\p_c$. Then $G_{\P}\equiv G_c$, where $G_{\P}$ acts on the roots of $P$ and $G_c$ acts on the roots of $P_c$.
\item\label{factorization_vs_Galois_lem} If $G_c\cong G$, then $\calF(P_c)=\calF(P)$.
\end{enumerate}
\end{lem}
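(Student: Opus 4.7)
The plan for part (1) is to reduce the roots of $P$ modulo $\P$ to obtain a $G_\P$-equivariant bijection onto the roots of $P_c$. Since $\ell(c)\ne 0$, each root $\alpha$ of $P$ in $N$ is integral over $A$, hence lies in $\calO_N$; write $\alpha_1,\dots,\alpha_n$ for these roots and set $\bar\alpha_i := \alpha_i+\P\in k(\P):=\calO_N/\P$. Reducing the identity
\[\Delta(t)=\ell(t)^{2n-2}\prod_{i<j}(\alpha_i-\alpha_j)^2\]
modulo $\P$ shows that $\Delta(c)\ne 0$ forces the $\bar\alpha_i$ to be pairwise distinct; since each $\bar\alpha_i$ is a root of $P_c$ and $\deg P_c=n$, the set $\{\bar\alpha_1,\dots,\bar\alpha_n\}$ is exactly the root set of $P_c$, and the splitting field of $P_c$ over $\Q$ is $L_c:=\Q(\bar\alpha_1,\dots,\bar\alpha_n)\subseteq k(\P)$.

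Next I build $\psi:G_\P\to G_c$: any $g\in G_\P$ preserves $\P$, hence induces an automorphism $\bar g$ of $k(\P)/\Q$, and since $g$ permutes the $\alpha_i$, $\bar g$ permutes the $\bar\alpha_i$ and stabilizes $L_c$; I set $\psi(g):=\bar g|_{L_c}$. By construction $\psi$ is equivariant with respect to the bijection $\alpha_i\leftrightarrow\bar\alpha_i$, so the proof of (1) reduces to showing $\psi$ is bijective. Injectivity is elementary: if $\psi(g)=\mathrm{id}$ then $g\alpha_i\equiv\alpha_i\pmod{\P}$ for each $i$; writing $g\alpha_i=\alpha_{\sigma(i)}$ for the induced permutation $\sigma$ of indices, distinctness of the $\bar\alpha_j$ forces $\sigma=\mathrm{id}$ and hence $g=1$, since $N=\Q(t)(\alpha_1,\dots,\alpha_n)$. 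For surjectivity I factor $\psi$ as
\[G_\P\onto\Gal(k(\P)/\Q)\onto\Gal(L_c/\Q)=G_c,\]
where the first arrow is the standard surjection from the decomposition group onto the Galois group of the residue field extension (valid because $A$ is a Dedekind domain, hence so is $\calO_N$) and the second is restriction from the Galois extension $k(\P)/\Q$ to its Galois subextension $L_c/\Q$.

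Part (2) then follows at once: if $G_c\cong G$ as abstract groups, then (1) gives $|G_\P|=|G_c|=|G|$, forcing $G_\P=G$; the equivariant bijection supplied by (1) now identifies the $G$-action on the roots of $P$ with the $G_c$-action on the roots of $P_c$, and since the factorization type records the multiset of orbit sizes of the Galois group on the roots, $\calF(P_c)=\calF(P)$. The only nonroutine ingredient is the classical decomposition-to-residue surjection from ramification theory; in our setting, where $A$ is a localization of $\Q[t]$, this is standard and the remaining arguments are direct verifications.
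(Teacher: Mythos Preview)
Your argument is correct and follows essentially the same route as the paper's proof: reduce the roots modulo $\P$, use $\Delta(c)\ne 0$ to get a bijection onto the roots of $P_c$, invoke the decomposition-group surjection onto the residue Galois group, and deduce (2) by matching orbit sizes. The only cosmetic differences are that you prove injectivity of $\psi$ directly from distinctness of the $\bar\alpha_i$ (whereas the paper phrases it as ``$\p_c$ unramified $\Rightarrow$ trivial inertia''), and you allow $L_c\subseteq k(\P)$ rather than asserting equality; neither changes the substance.
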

\begin{proof}
Statement (1) is well known, at least in the case where $P$ is monic and  irreducible; a proof can be found in \cite[Chap. VII, Thm. 2.9]{lang_algebra}. We include a sketch of the general case here.

For every element $a\in\calO_N$ let $\bar a$ denote the reduction of $a$ modulo $\P$. The extension of residue fields $\kappa(\P)/\Q$ is Galois, and there is a surjective homomorphism $G_{\P}\to\Gal(\kappa(\P)/\Q)$ given by $\sigma\mapsto\bar\sigma$, where
\begin{equation}\label{root_reduction_eqn}
\bar\sigma(\bar a)=\overline{\sigma(a)}\;\;\text{for every $\sigma\in G_{\P}$ and $a\in\calO_N$}.
\end{equation}

The condition $\Delta(c)\ne 0$ implies that $\p_c$ is unramified in $N$, so this map is in fact an isomorphism. If $x_1,\ldots, x_n\in\calO_N$ denote the roots of $P$ in $N$, then $\bar x_1,\ldots, \bar x_n$ are the roots of $P_c$ in $\kappa(\P)$. Moreover, one can show that
\[\kappa(\P)=\Q(\bar x_1,\ldots, \bar x_n),\]

so that $\kappa(\P)$ is a splitting field for $P_c$ and hence $G_c=\Gal(\kappa(\P)/\Q)$. We therefore have an isomorphism $G_{\P}\to G_c$ and a bijection $x_i\mapsto\bar x_i$ between the set of roots of $P$ and the set of roots of $P_c$. The property \eqref{root_reduction_eqn} then implies that $G_{\P}\equiv G_c$, proving (1).

Suppose now that $G_c\cong G$. To prove (2) we will show that for every irreducible factor $Q\in\Q[t][x]$ of $P$, the polynomial $Q(c,x)$ remains irreducible; the result then follows immediately.

Let $Q$ be an irreducible factor of $P$ and let $\P$ be a prime of $N$ dividing $\p_c$. By (1) we have $G_{\P}\equiv G_c$. Since $G_c\cong G$ and $G_c\equiv G_{\P}$, we must have $G_{\P}=G$ and therefore $G_c\equiv G$. The proof of (1) provides an isomorphism $G\to G_c$ and a bijection $r$ between the roots of $P$ and the roots of $P_c$. The map $r$ is the restriction of a ring homomorphism, so the roots of $Q$ are mapped bijectively by $r$ to the roots of $Q(c,x)$. Thus it is still the case that $G\equiv G_c$ if we regard $G$ as acting on the roots of $Q$ and $G_c$ as acting on the roots of $Q(c,x)$. Since $G$ acts transitively on the roots of $Q$, $G_c$ must act transitively on the roots of $Q(c,x)$. Hence $Q(c,x)$ is irreducible, as claimed.
\end{proof}

The following two propositions will be our main tools for describing the exceptional set of $P$ and determining the Galois group of every exceptional specialization $P_c$.

\begin{prop}\label{HIT_subgroup_curve_prop}
Let $H$ be a subgroup of $G$ and let $f(t,x)$ be a monic irreducible polynomial in $A[x]$ such that the fixed field of $H$ is generated over $\Q(t)$ by a root of $f$. Suppose that $c\in\Q$ satisfies
\[\Delta(c)\cdot\ell(c)\cdot\disc f(c,x)\ne 0.\]

Then the following are equivalent:
\begin{enumerate}
\item The polynomial $f(c,x)$ has a rational root.
\item There exists a prime $\P$ of $N$ dividing $\p_c$ such that $G_{\P}\subseteq H$.
\end{enumerate}
\end{prop}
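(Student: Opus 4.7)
The plan is to translate the condition $G_\P \subseteq H$ into the assertion that $\alpha$ reduces to a rational number modulo $\P$, and then to match such reductions with the rational roots of $f(c,x)$. Since $f \in A[x]$ is monic, the root $\alpha$ lies in $\calO_N$, and because $[\Q(t)(\alpha):\Q(t)] = \deg_x f = [G:H]$, the stabilizer of $\alpha$ in $G$ is exactly $H$. The hypothesis $\Delta(c)\ell(c) \ne 0$ ensures that $\p_c$ is unramified in $N$, so for every prime $\P \mid \p_c$ the reduction map furnishes an isomorphism $G_\P \cong \Gal(\kappa(\P)/\Q)$, as in the proof of Lemma \ref{HIT_lem}(1). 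Moreover, $\disc f(c,x) \ne 0$ forces the $d=[G:H]$ roots $\alpha_1,\dots,\alpha_d$ of $f(t,x)$ in $N$ to have pairwise distinct reductions in $\kappa(\P)$, since these reductions must exhaust the $d$ distinct roots of the separable polynomial $f(c,x)$.

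The key observation is that for a fixed $\P \mid \p_c$, the condition $G_\P \subseteq H$ is equivalent to $G_\P$ fixing $\alpha$, which, thanks to the mod-$\P$ distinctness of the $\alpha_i$, is in turn equivalent to the image of $G_\P$ in $\Gal(\kappa(\P)/\Q)$ fixing $\bar\alpha$, hence to $\bar\alpha \in \Q$. The direction $(2)\Rightarrow(1)$ is then immediate, because reducing the identity $f(t,\alpha)=0$ modulo $\P$ exhibits $\bar\alpha$ as a rational root of $f(c,x)$.

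For the converse, suppose $\beta\in\Q$ is a root of $f(c,x)$, and fix an arbitrary prime $\P_0 \mid \p_c$. The reductions $\overline{g(\alpha)} \in \kappa(\P_0)$ for $g$ ranging over coset representatives of $H$ in $G$ account for all roots of $f(c,x)$ in $\kappa(\P_0)$, so $\beta = \overline{g(\alpha)}$ for some $g\in G$. Applying $g^{-1}$ to the congruence $g(\alpha)\equiv \beta \pmod{\P_0}$, and using that $g^{-1}$ fixes $\beta\in\Q$ and sends the ideal $\p_c$ to itself, yields $\alpha \equiv \beta \pmod{\P}$ for the prime $\P:=g^{-1}(\P_0)$, which again divides $\p_c$. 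The equivalence from the previous paragraph then gives $G_\P \subseteq H$. The delicate point throughout is the passage from a mod-$\P$ congruence $\overline{\sigma(\alpha)}=\bar\alpha$ to the honest equality $\sigma(\alpha)=\alpha$; this is precisely where $\disc f(c,x)\ne 0$ intervenes, via the distinctness of the $\bar\alpha_i$. Everything else is bookkeeping with the $G$-action on the primes above $\p_c$.
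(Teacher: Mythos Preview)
Your proof is correct and takes a somewhat different route from the paper's. The paper works via the intermediate field $K=\Q(t)(\alpha)$: it invokes the Dedekind--Kummer theorem (using that $\disc f(c,x)\ne 0$ makes $\p_c$ coprime to the conductor of $A[\alpha]$) to match rational roots of $f(c,x)$ with primes $\q$ of $K$ above $\p_c$ of residue degree $1$, and then appeals to a maximality property of decomposition fields to translate ``$\q$ has residue degree $1$'' into ``some $\P\mid\p_c$ has $K$ contained in the fixed field of $G_{\P}$,'' i.e., $G_{\P}\subseteq H$.

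Your argument bypasses both of these ingredients by working entirely inside $N$: you use the unramifiedness of $\p_c$ to identify $G_{\P}$ with $\Gal(\kappa(\P)/\Q)$, and you use $\disc f(c,x)\ne 0$ only to ensure that the reductions $\bar\alpha_i$ are distinct, which lets you pass back and forth between $\sigma(\alpha)=\alpha$ and $\overline{\sigma(\alpha)}=\bar\alpha$. The $(1)\Rightarrow(2)$ direction is then handled by transporting $\P_0$ by $g^{-1}$. This is more elementary and self-contained; the paper's approach is more structural, packaging the same content into standard theorems about splitting of primes in subextensions.
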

\begin{proof}
Let $K$ be the fixed field of $H$ and let $\theta$ be a root of $f(t,x)$ such that $K=\Q(t)(\theta)$. Let $\calO_K$ be the integral closure of $A$ in $K$. The condition $\disc f(c,x)\ne 0$ implies that $\p_c$ is relatively prime to the conductor of the ring $A[\theta]$ in $\calO_K$. The Dedekind-Kummer theorem \cite[p. 47, Prop. 8.3]{neukirch} therefore allows us to relate the factorization of the polynomial $f(c,x)$ to the factorization of $\p_c$ in $\calO_K$. In particular, the theorem implies that $f(c,x)$ has a rational root if and only if there exists a prime $\q$ of $K$ dividing $\p_c$ such that the residue degree of $\q$ over $\p_c$ is equal to 1. Since $\q$ is unramified over $\p_c$, a maximality property of decomposition fields (see \cite[p. 118, Prop. 8.6]{lorenzini}) implies that (1) occurs if and only if there exists a prime $\P$ of $N$ dividing $\p_c$ such that $K$ is contained in the fixed field of $G_{\P}$. This proves the proposition, since the latter condition is clearly equivalent to (2).
\end{proof}

\begin{prop}\label{HIT_exceptional_set_prop}
Let $M_1,\ldots, M_r$ be representatives of all the conjugacy classes of maximal subgroups of $G$. For $i=1,\ldots, r$ let $F_i$ be the fixed field of $M_i$ and let $f_i\in\Q[t][x]$ be a monic irreducible polynomial such that the extension $F_i/\Q(t)$ is generated by a root of $f_i$. Suppose that $c\in\Q$ satisfies
\[\Delta(c)\cdot\ell(c)\cdot\prod_{i=1}^r\disc f_i(c,x)\ne 0.\]
Then $c\in\calE(P)$ if and only if there is an index $i$ such that $f_i(c,x)$ has a rational root.
\end{prop}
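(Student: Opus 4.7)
The plan is to translate membership in $\calE(P)$ into a statement about decomposition groups of primes of $N$ over $\p_c$, and then invoke Proposition~\ref{HIT_subgroup_curve_prop} to convert that statement into the existence of rational roots of the $f_i$.

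First I would reduce to a purely group-theoretic condition on any one prime $\P$ of $N$ dividing $\p_c$. By Lemma~\ref{HIT_lem}(2), whenever $G_c \cong G$ we automatically have $\calF(P_c) = \calF(P)$; hence $c \in \calE(P)$ if and only if $G_c \not\cong G$. By Lemma~\ref{HIT_lem}(1), $G_\P \equiv G_c$, and in particular $G_\P \cong G_c$ as abstract groups. Since $G_\P$ is a subgroup of the finite group $G$, the isomorphism $G_\P \cong G$ forces $G_\P = G$. Therefore $c \in \calE(P)$ if and only if $G_\P$ is a proper subgroup of $G$ for some (equivalently, every) prime $\P$ of $N$ above $\p_c$.

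Next I would carry out the group-theoretic reduction. Every proper subgroup of $G$ is contained in a maximal subgroup, and every maximal subgroup of $G$ has the form $gM_ig^{-1}$ for some $i$ and some $g\in G$. Thus $G_\P \subsetneq G$ if and only if there exist $i$ and $g\in G$ with $G_\P \subseteq gM_ig^{-1}$. Using the standard identity $G_{g^{-1}\P} = g^{-1}G_\P g$ together with the fact that $G$ acts transitively on the primes of $N$ above $\p_c$, the last condition is equivalent to the existence of an index $i$ and a prime $\P'$ of $N$ dividing $\p_c$ such that $G_{\P'} \subseteq M_i$.

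Finally I would invoke Proposition~\ref{HIT_subgroup_curve_prop}, applied with $H = M_i$ and $f = f_i$; the hypothesis $\disc f_i(c,x)\ne 0$ is part of the standing assumption on $c$, and so are $\Delta(c)\ne 0$ and $\ell(c)\ne 0$. That proposition tells us that the existence of a prime $\P'$ above $\p_c$ with $G_{\P'} \subseteq M_i$ is equivalent to the polynomial $f_i(c,x)$ having a rational root. Taking the disjunction over $i$ yields the desired equivalence. The only step that is not purely a citation is the group-theoretic translation in the previous paragraph, but it is genuinely routine; the real substance has already been packaged in Lemma~\ref{HIT_lem}, which produces the dictionary between $G_\P$ and $G_c$, and in Proposition~\ref{HIT_subgroup_curve_prop}, which produces the dictionary between rational roots of $f_i(c,x)$ and subgroup containments $G_{\P'} \subseteq M_i$.
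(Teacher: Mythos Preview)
Your argument is correct and follows essentially the same route as the paper's proof: both reduce $c\in\calE(P)$ to $G_c\not\cong G$ via Lemma~\ref{HIT_lem}(2), identify $G_c$ with a decomposition group $G_\P$ via Lemma~\ref{HIT_lem}(1), and then use Proposition~\ref{HIT_subgroup_curve_prop} together with the fact that a proper subgroup lies in some conjugate of some $M_i$. The only difference is cosmetic: where the paper writes ``replacing $\P$ by a conjugate ideal if necessary,'' you spell out the identity $G_{g^{-1}\P}=g^{-1}G_\P g$ explicitly.
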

\begin{proof}
Suppose first that $f_i(c,x)$ has a rational root for some $i$. Then Proposition \ref{HIT_subgroup_curve_prop} implies that there exists a prime $\P$ of $N$ dividing $\p_c$ such that $G_{\P}\subseteq M_i$. By Lemma \ref{HIT_lem} we know that $G_c$ is isomorphic to $G_{\P}$, which is a proper subgroup of $G$. Thus $G_c$ cannot be isomorphic to $G$, so $c\in\calE(P)$.

Conversely, suppose that $c\in\calE(P)$. Then $G_c\not\cong G$ by part (2) of Lemma \ref{HIT_lem}. Let $\P$ be a prime of $N$ dividing $\p_c$, so that $G_c\cong G_{\P}$. Since $G_c\not\cong G$, $G_{\P}$ must be a proper subgroup of $G$. Replacing $\P$ by a conjugate ideal if necessary, we then have $G_{\P}\subseteq M_i$ for some $i$. By Proposition \ref{HIT_subgroup_curve_prop} this implies that $f_i(c,x)$ has a rational root.
\end{proof}

\begin{rem}[Algorithmic aspects]\label{HIT_algorithmic_rem}
Propositions \ref{HIT_subgroup_curve_prop} and \ref{HIT_exceptional_set_prop} can be used in practice to explicitly describe the exceptional set of $P$ and to determine the structure of the Galois group $G_c$ for every $c\in\calE(P)$. It is crucial for these purposes to be able to compute the Galois group $G$ as well as a defining polynomial for the fixed field of any subgroup of $G$. Both of these tasks can be carried out using known methods. Indeed, an algorithm of Fieker and Kl\"uners \cite{fieker-kluners} can be used to compute $G$, and then the fixed field of any subgroup of $G$ can be computed as described in \cite[\S3.3]{kluners-malle}, for instance. In the case where $P$ is irreducible, these methods were implemented by Fieker and are available via the \textsc{Magma} \cite{magma} functions \texttt{GaloisGroup} and \texttt{GaloisSubgroup}. The general case where $P$ is allowed to be reducible is treated in the article \cite{krumm-sutherland}; however we will not be needing here the algorithms for that case.
\end{rem}

We end this section with a proposition that can be used to determine the factorization types $\calF(P_c)$ for all $c\in\calE(P)$ when the set $\calE(P)$ is a finite union of infinite sets, each of which can be parametrized by a rational function. As will be shown in Proposition \ref{Phi4_exceptional_set_prop}, this is the case for the exceptional set of the dynatomic polynomial $\Phi_4(t,x)$.

\begin{prop}\label{rational_function_galois_group_prop}
Suppose that $\mu(z)=a(z)/b(z)\in\Q(z)$ is a rational function such that the polynomial $f(t,x)=t\cdot b(x)-a(x)\in\Q[t][x]$ is monic and irreducible and has a root $\theta\in N$. Let $L=\Q(t)(\theta)$ and $H=\Gal(N/L)$. Then the following hold:
\begin{enumerate}
\item Suppose that $c\in\Q$ satisfies $\Delta(c)\cdot\ell(c)\cdot\disc f(c,x)\ne 0$ and has the form $c=\mu(v)$ for some $v\in\Q$. Then there exists a prime $\P$ of $N$ dividing $\p_c$ such that $G_{\P}\subseteq H$.
\item Define a polynomial $g(t,x)=b(t)^m\cdot P(\mu(t),x)$, where $m\ge 0$ is chosen so that $g$ has coefficients in $\Q[t]$. Then $\Gal(g)\equiv H$.
\item With notation and assumptions as above, suppose furthermore that $G_{\P}=H$ and $\disc g(v,x)\ne 0$. Then $G_c\equiv H$ and $\calF(P_c)=\calF(g)$.
\end{enumerate}
\end{prop}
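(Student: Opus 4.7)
My plan is to treat (1) as a direct application of Proposition \ref{HIT_subgroup_curve_prop}, to prove (2) via an explicit $\Q$-isomorphism $\Q(s)\xrightarrow{\sim} L$, and to deduce (3) by combining (1) and (2) with Lemma \ref{HIT_lem}. For (1), the key observation is that $c=a(v)/b(v)$ forces
\[f(c,v)=c\cdot b(v)-a(v)=0,\]
so $v$ is a rational root of $f(c,x)$. The hypotheses on $c$ are exactly those of Proposition \ref{HIT_subgroup_curve_prop} applied to the subgroup $H$ and defining polynomial $f$, and the conclusion drops out.

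For (2), the key input is that $\theta$ satisfies $t=\mu(\theta)$, which forces $L=\Q(t)(\theta)=\Q(\theta)$ and reveals $\theta$ to be transcendental over $\Q$ (since $L$ has transcendence degree one over $\Q$). The map $\Q(s)\to L$ sending $s\mapsto\theta$ is then a field isomorphism under which $g(s,x)$ corresponds to $g(\theta,x)=b(\theta)^m\cdot P(t,x)$. Since $b(\theta)\in L^{\times}$, the polynomials $g(\theta,x)$ and $P(t,x)$ share the same roots in $N$, so $N$ is their common splitting field over $L$ with Galois group $H$. Transporting this back along $s\mapsto\theta$ yields an isomorphism $\Gal(g)\cong H$ together with an action-preserving identification of the roots of $g(s,x)$ with the roots of $P(t,x)$, proving $\Gal(g)\equiv H$.

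For (3), part (1) produces a prime $\P$ of $N$ with $G_{\P}\subseteq H$; the extra assumption $G_{\P}=H$, combined with Lemma \ref{HIT_lem}(1), gives $G_c\equiv G_{\P}=H$, which handles the Galois-group claim. For the factorization type, I specialize $g$ at $v$: since $c=\mu(v)$, we have $g(v,x)=b(v)^m\cdot P_c(x)$ with $b(v)\ne 0$, so $\calF(g(v,x))=\calF(P_c)$. Because $G_c\cong H\cong\Gal(g)$ by (2), the Galois group of the specialization $g(v,x)$ is isomorphic to $\Gal(g)$. Applying Lemma \ref{HIT_lem}(2) to the polynomial $g$ at the point $v$---whose discriminant-and-leading-coefficient hypothesis is supplied by $\disc g(v,x)\ne 0$---yields $\calF(g(v,x))=\calF(g)$, and combining equalities gives $\calF(P_c)=\calF(g)$.

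The main obstacle is the bookkeeping in (2): the isomorphism $\Q(s)\cong L$ must be set up so that the action of $\Gal(g)$ on the roots of $g(s,x)$ corresponds, via a single explicit bijection, to the action of $H$ on the roots of $P(t,x)$, since this is precisely the compatibility that (3) then transfers (via Lemma \ref{HIT_lem}) to an identification of $G_c$'s action on the roots of $P_c$ with $H$'s action on the roots of $P$. Once this correspondence is in place, the remaining steps are a routine invocation of Proposition \ref{HIT_subgroup_curve_prop} and two applications of Lemma \ref{HIT_lem}.
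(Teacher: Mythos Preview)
Your proposal is correct and follows essentially the same route as the paper: (1) is deduced from Proposition~\ref{HIT_subgroup_curve_prop} via the rational root $v$ of $f(c,x)$; (2) uses the isomorphism $\Q(s)\to L$, $s\mapsto\theta$ (the paper writes it as $\Q(t)\to L$, $t\mapsto\theta$) to transport $g$ to $b(\theta)^m P(t,x)$ and identify the splitting field with $N$; and (3) combines Lemma~\ref{HIT_lem}(1), the hypothesis $G_{\P}=H$, and Lemma~\ref{HIT_lem}(2) applied to $g$ at $v$. The only minor point to tidy is that the leading-coefficient hypothesis in Lemma~\ref{HIT_lem}(2) for $g$ at $v$ is not literally the same as $\disc g(v,x)\ne 0$; it follows instead from $b(v)\ne 0$ and $\ell(c)\ne 0$ (carried over from the assumptions in (1)), exactly as the paper notes.
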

\begin{proof}
With assumptions as in (1), the polynomial $f(c,x)$ has a rational root (namely $v$), so Proposition \ref{HIT_subgroup_curve_prop} implies that there exists a prime $\P$ of $N$ dividing $\p_c$ such that $G_{\P}\subseteq H$. This proves (1).

From the assumption that $f$ is irreducible it follows that $b(\theta)\ne 0$, so $\mu(\theta)$ is defined and $\mu(\theta)=t$. In particular $t\in\Q(\theta)$, so $L=\Q(\theta)$. Clearly then $\theta$ is transcendental over $\Q$, so the map $t\mapsto\theta$ is an isomorphism $\sigma:\Q(t)\to L$.

We can now prove (2). Let $E/\Q(t)$ be a splitting field for $g(t,x)$. Applying $\sigma$ to the coefficients of $g$ we obtain the polynomial $g(\theta, x)\in L[x]$. Note that since $\mu(\theta)=t$, then $g(\theta,x)=b(\theta)^m\cdot P(t,x)$, so $N$ is a splitting field for $g(\theta,x)$. Thus, by basic field theory, $\sigma$ extends to an isomorphism $E\to N$. From here it follows easily that $\Gal(g)\equiv\Gal(N/L)=H$, proving (2). 

Finally, we prove (3). Note that $g(v,x)=b(v)^m\cdot P(c,x)$, so that the polynomials $g(v,x)$ and $P_c$ have the same Galois group and factorization type. Applying part 1 of Lemma \ref{HIT_lem} and using (2) we obtain
\begin{equation}\label{parametrizable_exc_set_galois_eq}
\Gal(g(v,x))=G_c\equiv G_{\P}=H\equiv\Gal(g).
\end{equation}
In particular $G_c\equiv H$, which proves the first part of (3). To prove the second part we apply part 2 of Lemma \ref{HIT_lem} to $g$ and $v$ instead of $P$ and $c$. The leading coefficient and the discriminant of $g(v,x)$ are both nonzero, and we have $\Gal(g(v,x))\cong\Gal(g)$ by \eqref{parametrizable_exc_set_galois_eq}, so the lemma implies that $\calF(g(v,x))=\calF(g)$. Since $g(v,x)$ and $P_c$ have the same factorization type, we conclude that $\calF(P_c)=\calF(g)$, which completes the proof of (3).
\end{proof}

\section{Specializations of $\Phi_4$}\label{phi4_factorization_galois_section}

The polynomial $\Phi_4(t,x)$ is given by
\begin{multline*}\label{phi4_expression}
\Phi_4(t,x)=x^{12} + (6 t) x^{10} + x^{9} + (15 t^{2} + 3 t) x^{8} + (4 t) x^{7} + (20 t^{3} + 12 t^{2} + 1) x^{6}\\ + (6 t^{2} + 2 t) x^{5} + (15 t^{4} + 18 t^{3} + 3 t^{2} + 4 t) x^{4} + (4 t^{3} + 4 t^{2} + 1) x^{3}+\\ (6 t^{5} + 12 t^{4} + 6 t^{3} + 5 t^{2} + t) x^{2} + (t^{4} + 2 t^{3} + t^{2} + 2 t) x + t^{6} + 3 t^{5} + 3 t^{4} + 3 t^{3} + 2 t^{2} + 1.
\end{multline*}

Our goal in this section is to classify the Galois groups and factorization types of the specialized polynomials $\Phi_4(c,x)$ as $c$ varies in $\Q$; the main result in this direction is Theorem \ref{phi4_factorization_galois_thm}.

Let $N/\Q(t)$ be a splitting field of $\Phi_4$ and let $G=\Gal(N/\Q(t))$ be the Galois group of $\Phi_4$.  We begin in \S\ref{subgroup_lattice_section} by making some observations regarding the lattice of subgroups of $G$. Then in \S\ref{subgroup_curves_section} we determine the rational points on several curves corresponding to subgroups of $G$. Finally, the desired classification theorem is proved in \S\ref{group_classification_section}.

\subsection{Subgroups of $G$}\label{subgroup_lattice_section}

As noted in \S\ref{dynatomic_section}, the group $G$ is isomorphic to the wreath product $(\Z/4\Z)\wr S_3$. We will need to use the action of $G$ on the roots of $\Phi_4$, so we choose a particular permutation representation of $G$.

The map $f(x)=x^2+t$ permutes the roots of $\Phi_4$, so the set of roots can be partitioned into $f$-orbits. The fact that $\Phi_4$ is irreducible and hence separable implies that every orbit has size four (see \cite[Thm. 2.4(c)]{morton-patel}). The roots of $\Phi_4$ are thus partitioned into three orbits of size four. Labeling the roots as in Figure \ref{roots_labeling_figure} below, we obtain an embedding $G\into S_{12}$ whose image is the centralizer of the permutation $(1,2,3,4)(5,6,7,8)(9,10,11,12)$; see \cite[pp. 2275-2277]{krumm_lgp}. Denoting this centralizer by $\calW$ we therefore have $G\equiv\calW$. We will henceforth identify $G$ with $\calW$.

\begin{figure}[!h]
\includegraphics[scale=0.9]{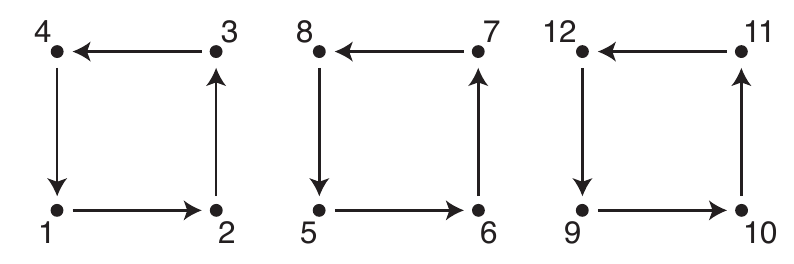}
\vspace*{8pt}
\caption{Labeling of the roots of $\Phi_{4}(t,x)$. Vertices represent the roots of $\Phi_4$, and directed edges represent the action of the map $x^2+t$.}
\label{roots_labeling_figure}
\end{figure}

Up to conjugacy, $G$ has exactly five maximal subgroups, which we denote by $M_1,\ldots, M_5$. We choose the labeling of these groups according to the following conditions, which determine the groups uniquely up to conjugation.
\[M_1\cong (192,944),\; |M_2|=128,\; |M_3|=96,\; M_4\cong(192,188),\; M_5\cong(192,182).\]

Here, the notation $M\cong (o,n)$ means that $M$ is isomorphic to the group labeled $(o,n)$ in the  \textsc{Small Groups} library \cite{small_groups}, which is distributed with the \textsc{Gap} \cite{gap} and \textsc{Magma} \cite{magma} computer algebra systems. This implies, in particular, that $o$ is the order of $M$.

In addition to the groups $M_i$, we will need to work with the maximal subgroups of these groups. The group $M_1$ has five maximal subgroups up to conjugation, which we denote by $A_1,\ldots, A_5$. In order to uniquely identify these groups it does not suffice to consider only their orders or their isomorphism types. Thus, we will refer to properties of the cycle decompositions of their elements. The labeling of the $A_i$ is chosen according to the following conditions:

\begin{itemize}[leftmargin=5mm]
\item $|A_1|=48,\;|A_2|=64$, $A_3\cong(96,68)$;
\item $A_4\cong (96,64)$ and has an element whose disjoint cycle decomposition is a product of six 2-cycles;
\item $A_5\cong(96,64)$ and does not have an element whose disjoint cycle decomposition is a product of six 2-cycles.
\end{itemize}

The group $M_2$ has seven maximal subgroups up to conjugation, which we denote by $B_1,\ldots, B_7$. We choose the labeling as follows:
\begin{itemize}[leftmargin=5mm]
\item $B_1\cong(64,55)$, $B_2\cong(64,85)$, $B_3\cong(64,198)$;
\item $B_4\cong(64,20)$ and has an element with cycle decomposition of the form $(8\text{-cycle})(4\text{-cycle})$;
\item $B_5\cong (64,20)$ and does not have an element with cycle decomposition of the form $(8\text{-cycle})(4\text{-cycle})$;
\item $B_6\cong(64,101)$ and has an element with cycle decomposition of the form $(8\text{-cycle})(4\text{-cycle})$;
\item $B_7\cong(64,101)$ and does not have an element with cycle decomposition of the form $(8\text{-cycle})(4\text{-cycle})$.
\end{itemize}

The group $M_3$ has five maximal subgroups up to conjugation, which we denote by $C_1,\ldots, C_5$. We label these subgroups according to the conditions
\[|C_1|=24,\; |C_2|=32,\; C_3\cong (48,30),\; C_4\cong (48,31),\; C_5\cong (48,48).\]

The group $M_4$ has three maximal subgroups up to conjugation, which we denote by $D_1,D_2,D_3$. The labeling is chosen as follows:
\[|D_1|=48,\;|D_2|=64,\;|D_3|=96.\]

Finally, the group $M_5$ has three maximal subgroups up to conjugation, which we denote by $E_1,E_2,E_3$ and label according to the properties
\[|E_1|=48,\;|E_2|=64,\;|E_3|=96.\]

Some of the groups listed above are conjugate in $G$. In particular, we note the following pairs of conjugate subgroups of $G$:
\begin{equation}\label{conjugate_pairs}
(A_1,C_5),\;(A_2,B_6),\;(A_3,D_3),\;(B_1,D_2),\;(B_5,E_2).
\end{equation}

We summarize some of the above information in Figure~\ref{subgroup_lattice_figure}.

\begin{figure}[!h]
\includegraphics[scale=0.23]{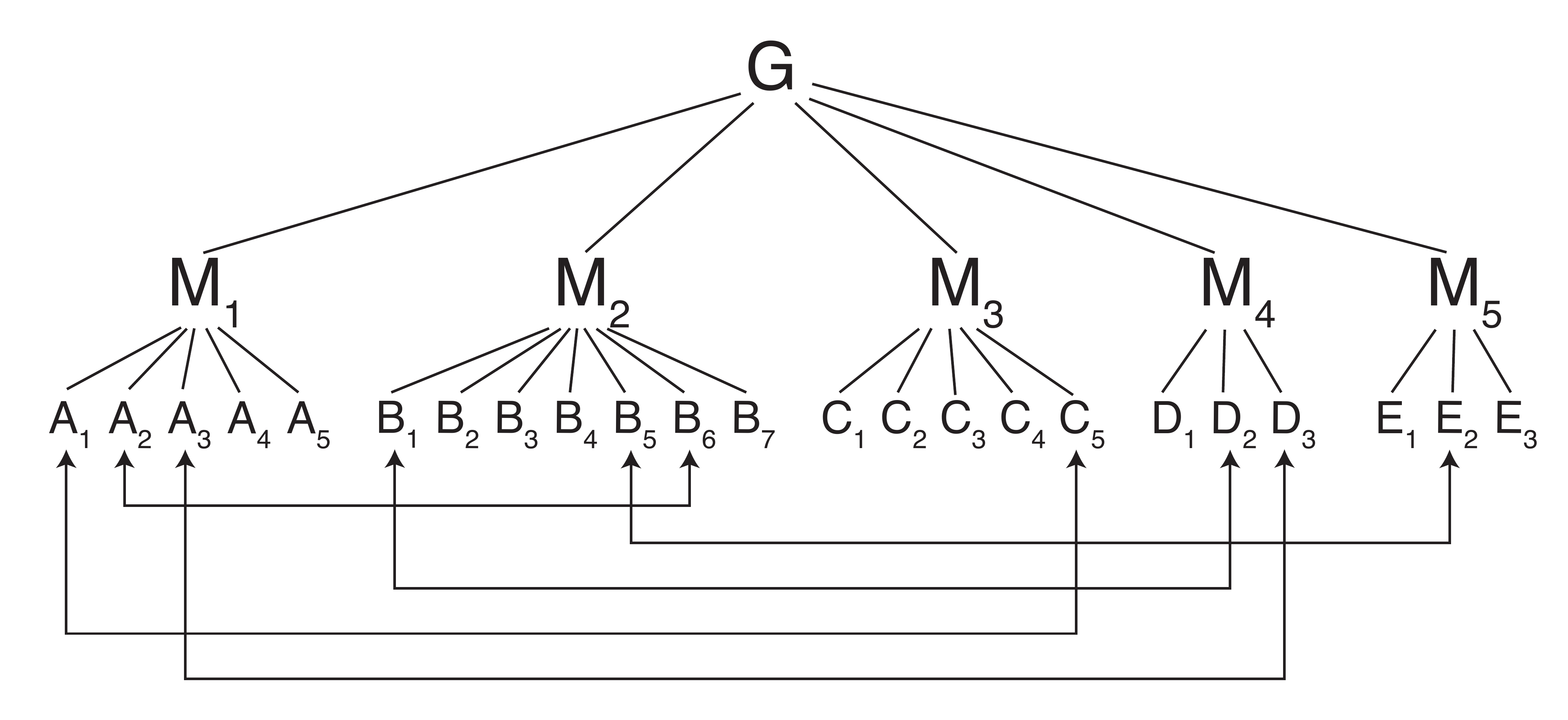}
\vspace*{8pt}
\caption{Part of the lattice of subgroups of $G$. An arrow joining two subgroups indicates conjugacy in $G$.}
\label{subgroup_lattice_figure}
\end{figure}

The subgroups of $\calW$ corresponding to $M_1$, $M_2$, and $B_7$ will be especially important in our analysis in \S\ref{group_classification_section}. These three groups are conjugate in $\calW$ to the groups $\calG$, $\calH$, and $\calI$ defined in Appendix \ref{galois_data_appendix}. Thus we have
\begin{equation}\label{subgroup_equivalence}
G\equiv\calW,\; M_1\equiv\calG,\; M_2\equiv\calH,\; B_7\equiv\calI.
\end{equation}

\subsection{Curves corresponding to subgroups of $G$}\label{subgroup_curves_section}

To every subgroup $H$ of $G$ we associate an affine algebraic curve as follows. Let $L$ be the fixed field of $H$, let $\theta$ be a primitive element for the extension $L/\Q(t)$ such that $\theta$ is integral over $\Q[t]$, and let $f(t,x)\in\Q[t][x]$ be the minimal polynomial of $\theta$ over $\Q(t)$. Then the curve corresponding to $H$ is given by the equation $f(t,x)=0$. In practice, the polynomial $f$ can be computed using the methods discussed in Remark \ref{HIT_algorithmic_rem}.

In this section we will determine the rational points on the curves corresponding to several of the subgroups of $G$ defined in \S\ref{subgroup_lattice_section}. All of the curves we need to consider are either rational, elliptic, or hyperelliptic. An important tool in what follows is an algorithm for computing a parametrization of a given rational curve; we refer the reader to \cite[Chap. 4]{sendra-winkler-perez} for a discussion of the algorithmic aspects of this problem. For our computations we have used the \texttt{Parametrization} function in \textsc{Magma}. All of the elliptic curves that arise have rank 0, so it is a straightforward calculation to determine their rational points. For the hyperelliptic curves, the methods we use to find their rational points are discussed in \S\ref{rational_points_section}. 

We begin by considering the curves corresponding to the maximal subgroups $M_i$. Computing the fixed fields of these groups we obtain the following polynomials:
\begin{align*}
f_1(t,x) &= x^2 - 64t^2 + 32t - 20,\\
f_2(t,x) &= x^3 + (4t + 3)x + 4,\\
f_3(t,x) &= x^4 + (12 - 16t)x^3 + (64t^2 - 64t + 36)x^2 + (256t^2 - 64t + 160)x + 256t^2,\\
f_4(t,x) &= x^2 + 256t^3 + 576t^2 + 432t + 540,\\
f_5(t,x) &= x^2 + 16(16t^2 - 8t + 5)(64t^3 + 144t^2 + 108t + 135).\\
\end{align*}

\vspace{-4mm}
The curve associated to $M_i$ is given by the equation $f_i(t,x)=0$. We will now determine the rational points on these five curves.

\begin{lem}\label{maximal_subgroup_curves_lem}
Let $\calC_i$ denote the affine plane curve defined by $f_i(t,x)=0$. Let $\lambda$ and $\eta$ be the rational functions defined in \eqref{rational_fncn_def}. Then the sets of rational points on the curves $\calC_i$ are given by
\begin{align}
\calC_1(\Q)&=\{(\lambda(z),(z^2+4)/z): z\in\Q^{\ast}\},\label{f1_curve_points}\\
\calC_2(\Q)&=\{(\eta(z),-z):z\in\Q^{\ast}\},\label{f2_curve_points}\\
\calC_3(\Q)&=\{(0,0), (0, -10), (-5/2,-10)\}, \label{f3_curve_points}\\
\calC_4(\Q)&=\calC_5(\Q)=\emptyset\label{f4,5_curve_points}.
\end{align}
\end{lem}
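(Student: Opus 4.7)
The proof breaks up according to the geometric type of the five curves $\calC_i$. The curves $\calC_1$ and $\calC_2$ are rational and I plan to parametrize them explicitly; $\calC_3$ and $\calC_4$ are birational to elliptic curves with small or trivial Mordell--Weil groups; and $\calC_5$ is a genus $2$ hyperelliptic curve whose rational-point determination will be the main obstacle.

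For $\calC_2$ the equation $f_2(t,x)=0$ is linear in $t$ with coefficient $4x$; since $f_2(t,0)=4$ we have $x\ne 0$ on $\calC_2$, and solving for $t$ gives $t=-(x^3+3x+4)/(4x)$. The substitution $z=-x$ produces $t=\eta(z)$ and establishes the claimed bijection. For $\calC_1$ I first verify by direct substitution that the given parametrization satisfies the defining equation, then invert it: combining $z^2-xz+4=0$ (from $x=(z^2+4)/z$) with $z^2+(2-8t)z-4=0$ (from $t=\lambda(z)$) yields $z=8/(x+2-8t)$, and the denominator cannot vanish on $\calC_1$, because $x+2-8t=0$ combined with $x^2=64t^2-32t+20$ forces $4=20$. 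Hence the parametrization gives a bijection between $\Q^\ast$ and $\calC_1(\Q)$.

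For $\calC_3$ I regard $f_3$ as a quadratic polynomial in $t$ with leading coefficient $64(x+2)^2$. The case $x=-2$ requires a separate check: one computes $f_3(t,-2)=-256$, so no point of $\calC_3$ has $x=-2$. For $x\ne -2$ the quadratic has rational roots if and only if its discriminant
\[
\operatorname{disc}_t f_3 \;=\; -2048\, x(x+2)^2(x^2+4x+20)
\]
is a rational square, and after extracting the square factor $(32(x+2))^2$ this reduces to requiring that $-2x(x^2+4x+20)$ itself be a rational square. Under the substitution $X=-2x$, together with a suitable rescaling of the square root, this is equivalent to the existence of a rational point on the elliptic curve $E_3\colon Y^2=X^3-8X^2+80X$. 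A Mordell--Weil computation (via \textsc{Magma}, say) will show that $E_3(\Q)$ is the cyclic group of order six generated by $(20,80)$, with nontrivial $X$-coordinates in $\{0,4,20\}$. The value $X=4$ corresponds to $x=-2$, which does not lift; $X=0$ gives the single point $(0,0)$; and $X=20$ gives the two points $(0,-10)$ and $(-5/2,-10)$. Together these exhaust $\calC_3(\Q)$.

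For $\calC_4$ and $\calC_5$ I would exploit the identity $(4t+3)^3=64t^3+144t^2+108t+27$. For $\calC_4$ this rewrites $f_4(t,x)=0$ as $x^2=-4((4t+3)^3+108)$, so $\calC_4$ is birational to the Mordell curve $E_4\colon V^2=S^3-108$; this curve has trivial Mordell--Weil group (by $2$-descent, or from standard tables for curves of $j$-invariant $0$ and small conductor), so $\calC_4(\Q)=\emptyset$. For $\calC_5$ the cubic factor equals $(4t+3)^3+108$, which has no rational root (since $-108$ is not a rational cube), and the quadratic factor $16t^2-8t+5$ has negative discriminant and so is positive on all of $\Q$. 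These observations rule out any cheap obstruction, so proving $\calC_5(\Q)=\emptyset$ will be the genuine difficulty. I would defer this argument to Section \ref{rational_points_section}, where Chabauty's method (combined, if necessary, with a Mordell--Weil sieve) can be applied uniformly to all of the hyperelliptic curves arising in the paper.
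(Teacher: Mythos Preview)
Your argument is correct throughout, and for $\calC_1$, $\calC_2$, and $\calC_4$ it is essentially the same as the paper's (your inverse $z=8/(x+2-8t)$ for $\calC_1$ agrees on the curve with the paper's $z=(8t+x-2)/2$, since the defining equation gives $(8t+x-2)(x+2-8t)=16$). For $\calC_3$ you take a genuinely different route: rather than writing down an explicit birational map to the curve $Y^2=X^3+X^2+4X+4$ (Cremona 20a1), you view $f_3$ as a quadratic in $t$ and reduce via its discriminant to $Y^2=X^3-8X^2+80X$. This is the same elliptic curve in disguise --- translating the $2$-torsion point of 20a1 to the origin gives $y^2=x^{\prime 3}-2x^{\prime 2}+5x'$, and scaling by $(X,Y)=(4x',8y)$ recovers your model --- so the two arguments are equivalent, though yours avoids having to guess the birational map.

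The one place where the paper's treatment is simpler than you anticipate is $\calC_5$: the Jacobian of the associated genus-$2$ curve has rank~$0$, so determining its rational points is a finite torsion computation rather than a genuine Chabauty argument. You may want to check the rank before committing to the heavier machinery.
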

\begin{proof}
The curve $\calC_1$ is parametrizable. Indeed, the rational maps
\[\phi:\calC_1\to\A^1=\Spec\Q[z]\;\;\text{and}\;\; \psi:\A^1\dashedrightarrow\calC_1\]
given by $\phi(t,x)=(8t+x-2)/2$ and $\psi(z)=\left(\lambda(z),(z^2+4)/z\right)$ are inverses. By a simple calculation we find that the pullback of 0 under $\phi$ contains no rational point of $\calC_1$; \eqref{f1_curve_points} now follows easily.

The curve $\calC_2$ is also parametrizable. Solving for $t$ in the equation $f_2(t,x)=0$ we obtain $t=\eta(-x)$; this implies \eqref{f2_curve_points}.

Let $E$ be the elliptic curve $Y^2 = X^3 + X^2 + 4X + 4$. There is a birational map $\phi:\calC_3\dashedrightarrow E$ given by $\phi=(\phi_1,\phi_2)$, where
\[\phi_1(t,x) = \frac{-x-10}{x}\;\;\;\text{and}\;\;\;\phi_2(t,x) = \frac{5x^2 + (10-40t)x - 80t}{2x^2}.\]
The curve $E$ has Cremona label 20a1. This curve has rank 0, and its affine rational points are $(-1,0)$, $(0,\pm 2)$, and $(4,\pm 10)$. Note that any rational point on $\calC_3$ different from $(0,0)$ is mapped by $\phi$ to one of these five points. Pulling back all the affine rational points on $E$ we obtain \eqref{f3_curve_points}.

The projective closure of $\calC_4$ is the elliptic curve with Cremona label 108a2, which has a trivial Mordell-Weil group. It follows that $\calC_4(\Q)=\emptyset$.

The curve $\calC_5$ is an affine model for a hyperelliptic curve $\calX$ whose Jacobian has rank 0. Computing the rational points on $\calX$ we find that $\calX(\Q)=\{\infty\}$. Hence $\calC_5(\Q)=\emptyset$, and \eqref{f4,5_curve_points} is proved.
\end{proof}

Next we will determine the rational points on the curves corresponding to the groups $A_4$, $A_5$, $B_2$, $B_3$, $B_4$, and $B_7$. The fixed fields of these subgroups of $G$ are generated by the polynomials $f(t,x)$ defined in Lemmas \ref{A4_curve_lem}-\ref{B7_curve_lem}. Hence the curves $\calA_4,\calA_5,\calB_2$, etc. defined in these lemmas are the curves corresponding to the above subgroups.

\begin{lem}\label{A4_curve_lem} Let $f(t,x)=x^4-p(t)x^2+q(t)$, where
\begin{align*}
p(t) &= 32(4t + 5)(16t^2 - 8t + 5)(64t^3 + 144t^2 + 108t + 135),\\
q(t) &= 256(4t - 1)^2(4t + 5)^2(16t^2 - 8t + 5)(64t^3 + 144t^2 + 108t + 135)^2.
\end{align*}
Let $\calA_4$ be the affine curve defined by the equation $f(t,x)=0$. Then
\[\calA_4(\Q)=\{(-5/4,0), (1/4,0)\}.\]
\end{lem}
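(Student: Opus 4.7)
The plan is to exploit the biquadratic structure of $f(t,x)$ to reduce the determination of $\calA_4(\Q)$ to a question about rational points on a single auxiliary hyperelliptic curve. Setting $A(t) = (4t+5)\bigl((4t+3)^3+108\bigr)$ (using the identity $64t^3+144t^2+108t+135 = (4t+3)^3+108$) and $B(t) = 16t^2-8t+5$, one checks directly that $p = 32AB$ and $q = 256\,A^2 B(4t-1)^2$. Since $B - (4t-1)^2 = 4$, this yields $p^2 - 4q = 4096\,A^2 B$. Completing the square in $x^2$ in the equation $f(t_0, x_0) = 0$ then gives $(2x_0^2 - p(t_0))^2 = 4096\,A(t_0)^2 B(t_0)$. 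Hence for any rational point $(t_0, x_0)$ on $\calA_4$, either $A(t_0) = 0$ or $B(t_0)$ is a rational square.

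The first alternative is immediate: $A(t_0) = 0$ forces $p(t_0) = q(t_0) = 0$ and thus $x_0 = 0$; since $(4t+3)^3+108$ has no rational zero (because $108$ is not a cube in $\Q$), we must have $4t_0+5 = 0$, producing the point $(-5/4, 0)$. In the second alternative, $B$ has negative discriminant, so $B(t_0) = w^2$ for some $w \in \Q^\times$. I would parametrize the conic $w^2 = B(t)$ via $w = (s^2+1)/s$, $t = (s^2+s-1)/(4s)$ for $s \in \Q^\times$. The pleasant identities $w \pm 2 = (s \pm 1)^2/s$ convert the equation $x_0^2 = 16\,A(t_0)\,w(w\pm 2)$ into
\[ x_0^2 = 16\,A(t(s))\,(s^2+1)\,\bigl((s \pm 1)/s\bigr)^2. \]
At $s = \pm 1$ the right-hand side vanishes, giving $(t_0, x_0) = (1/4, 0)$; otherwise $x_0 \in \Q$ if and only if $A(t(s))(s^2+1)$ is a nonzero rational square.

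Under the parametrization, $4t+5 = (s^2+6s-1)/s$ and $(4t+3)^3+108 = \bigl((s^2+4s-1)^3+108s^3\bigr)/s^3$, so clearing denominators reduces the squareness condition to the existence of a rational point $(s, Y)$ with $s \in \Q\setminus\{0, \pm 1\}$ on the genus-$4$ hyperelliptic curve
\[ Y^2 = (s^2+6s-1)(s^2+1)\bigl((s^2+4s-1)^3+108s^3\bigr). \]
The main obstacle is to show that this curve has no such rational points; I would address this using the techniques of \S\ref{rational_points_section}, likely via $2$-descent on the Jacobian combined with a Mordell--Weil sieve or Chabauty-type argument. Establishing this rules out the remaining possibility and completes the proof that $\calA_4(\Q) = \{(-5/4, 0), (1/4, 0)\}$.
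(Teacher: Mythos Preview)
Your approach is essentially identical to the paper's: both complete the square in $x^2$ to reduce to the conic $16t^2-8t+5=\text{square}$, parametrize it, and land on a genus-$4$ hyperelliptic curve whose rational points must be determined separately. Your parametrization is related to the paper's by the M\"obius substitution $s=(v+1)/(v-1)$, under which your auxiliary curve becomes precisely the curve of Proposition~\ref{genus4_curve} (one checks $s^2+1=\tfrac{2(v^2+1)}{(v-1)^2}$, $s^2+6s-1=\tfrac{2(3v^2+2v-3)}{(v-1)^2}$, and $(s^2+4s-1)^3+108s^3=\tfrac{4}{(v-1)^6}\bigl(43v^6+48v^5-81v^4-80v^3+81v^2+48v-43\bigr)$), so the deferred step is exactly the one the paper carries out there.
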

\begin{proof}
Let $X$ be the curve in $\A^3=\Spec\Q[t,x,y]$ defined by the equations
\[
\begin{cases}
2x^2=p(t) + y,\\
y^2=p(t)^2-4q(t).
\end{cases}
\]
Note that there is a map $X\to \calA_4$ given by $(t,x,y)\mapsto(t,x)$, and that every rational point on $\calA_4$ lifts to a rational point on $X$. 

Suppose that $(t_0,x_0)\in\calA_4(\Q)$ and let $y_0$ be a rational number such that $(t_0,x_0,y_0)\in X(\Q)$. If $t_0=-5/4$, then $p(t_0)=q(t_0)=0$, so the equation $f(t_0,x_0)=0$ implies that $x_0=0$. Hence $(t_0,x_0)=(-5/4,0)\in\calA_4(\Q)$.

 Assuming now that $t_0\ne-5/4$, we may define
\[z_0=y_0/\left(2^6(4t_0+5)(64t_0^3 + 144t_0^2 + 108t_0 + 135)\right).\]

It follows from the equation $y_0^2=p(t_0)^2-4q(t_0)$ and the factorization
\[p(t)^2-4q(t)=2^{12}(4t+5)^2(64t^3 + 144t^2 + 108t + 135)^2(16t^2 - 8t + 5)\]

that $z_0^2=16t_0^2 - 8t_0 + 5$. Let $D$ be the affine curve defined by
\[z^2=16t^2-8t+5.\]

The curve $D$ is parametrizable. Indeed, there are inverse rational maps $\theta:D\dashedrightarrow\A^1=\Spec\Q[v]$ and $\psi:\A^1\dashedrightarrow D$ given by
\[\theta(t,z)=\frac{z+2}{4t-1}\;\;\text{and}\;\;\psi(v)=\left(\frac{v^2+4v-1}{4(v^2-1)},\frac{2v^2+2}{v^2-1}\right).\]

It is now a straightforward calculation to verify that
\begin{equation}\label{A4_D_curve_points}
D(\Q)=\{(1/4,2)\}\cup\{\psi(v):v\in\Q\setminus\{\pm 1\}\}.
\end{equation}

Since $(t_0,z_0)$ is a rational point on $D$, then either $(t_0,z_0)=(1/4,2)$ or there exists $v\in\Q\setminus\{\pm 1\}$ such that $(t_0,z_0)=\psi(v)$. In the former case the equation $f(t_0,x_0)=0$ becomes $x_0^2(x_0^2 - 132096)=0$, so $x_0=0$. Thus we have found the point $(t_0,x_0)=(1/4,0)\in\calA_4(\Q)$.

Assume now that $(t_0,z_0)=\psi(v)$ for some $v\in\Q\setminus\{\pm 1\}$. If $v=0$, then $(t_0,z_0)=(1/4,-2)$ and we again obtain the point $(t_0,x_0)=(1/4,0)$. If $v\ne 0$, then expressing $t_0, z_0$, and $y_0$ in terms of $v$, the equation $2x_0^2=p(t_0)+y_0$ implies that
\[x_0^2(v^2-1)^6=2^{10}v^2(v^2+1)(3v^2 + 2v - 3)(43v^6 + 48v^5 - 81v^4 - 80v^3 + 81v^2 + 48v - 43).\]

 Setting $w=x_0(v^2-1)^3/(32v)$ we arrive at the equation
\[w^2=(v^2+1)(3v^2 + 2v - 3)(43v^6 + 48v^5 - 81v^4 - 80v^3 + 81v^2 + 48v - 43).\]

By Proposition \ref{genus4_curve} this implies that $v=\pm 1$, which is a contradiction. Hence the points $(-5/4,0)$ and $(1/4,0)$ are the only rational points on $\calA_4$.
\end{proof}

\begin{lem}\label{A5_curve_lem}
Let $f(t,x)=x^4+p(t)x^2+q(t)$, where
\begin{align*}
p(t) &= 2(4t + 5)(16t^2 - 8t + 5),\\
q(t) &= (4t - 1)^2(4t + 5)^2(16t^2 - 8t + 5).
\end{align*}
Let $\calA_5$ be the affine curve defined by the equation $f(t,x)=0$. Then
\[\calA_5(\Q)=\{(-5/4,0),(1/4,0)\}.\]
\end{lem}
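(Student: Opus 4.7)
The plan is to follow closely the template of the proof of Lemma \ref{A4_curve_lem}. Since $f(t_0, x)$ is biquadratic in $x$, any rational root $x_0$ forces the discriminant $p(t_0)^2 - 4q(t_0)$ to be a rational square; I would therefore introduce the auxiliary curve $X \subset \mathbb{A}^3 = \Spec \Q[t,x,y]$ cut out by the pair of equations $2x^2 = -p(t) + y$ and $y^2 = p(t)^2 - 4q(t)$, so that every point of $\calA_5(\Q)$ lifts to a point of $X(\Q)$.

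Next, I would compute the factorization
\[
p(t)^2 - 4q(t) = 16(4t+5)^2(16t^2 - 8t + 5).
\]
The case $t_0 = -5/4$ is degenerate: both $p$ and $q$ vanish there, so $x_0 = 0$ and we recover the point $(-5/4,0)$. Assuming $t_0 \ne -5/4$, the substitution $z_0 = y_0/(4(4t_0+5))$ yields $z_0^2 = 16 t_0^2 - 8 t_0 + 5$, which is precisely the conic $D$ analyzed in the preceding lemma. Hence I may recycle the parametrization $D(\Q) = \{(1/4,2)\} \cup \{\psi(v) : v \in \Q \setminus \{\pm 1\}\}$ recorded in \eqref{A4_D_curve_points}.

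I would then dispose of the three remaining sub-cases. The isolated point $(t_0, z_0) = (1/4, 2)$ gives $y_0 = 48 = p(1/4)$, whence $x_0 = 0$, producing the second claimed point $(1/4, 0)$. The specialization $v = 0$, corresponding to $(t_0, z_0) = (1/4, -2)$, gives $2 x_0^2 = -96$, which has no rational solution. For a generic parameter $v \in \Q \setminus \{0, \pm 1\}$, substituting the explicit rational expressions for $t_0$, $y_0$, and $p(t_0)$ and clearing denominators via $w = x_0 (v^2-1)^2/4$ reduces the equation $2 x_0^2 = -p(t_0) + y_0$ to the genus-$2$ hyperelliptic equation
\[
w^2 = -(3v^2 + 2v - 3)(v^4 - 1).
\]

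The main (and essentially only) obstacle is to rule out rational points on this sextic curve. I would formulate this as a separate proposition in Section \ref{rational_points_section}, parallel to Proposition \ref{genus4_curve}, asserting that every rational point has $v = \pm 1$ (which in turn forces $w = 0$ via the vanishing of $v^4 - 1$). Depending on the Mordell--Weil rank of the Jacobian, this would be attacked either by a Chabauty--Coleman argument or by a $2$-cover descent. Once that proposition is in hand, the hypothesis $v \ne \pm 1$ is contradicted, the generic parametric case contributes nothing new, and the lemma follows with $\calA_5(\Q) = \{(-5/4, 0), (1/4, 0)\}$.
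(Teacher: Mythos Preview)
Your proposal is correct and follows essentially the same approach as the paper: introduce the auxiliary curve $X$, reduce to the conic $D$ from Lemma~\ref{A4_curve_lem}, and then eliminate the generic parametric branch via the genus-$2$ curve $w^2 = -(v^2-1)(v^2+1)(3v^2+2v-3)$. The only notable difference is the final step: the paper observes inline that this curve has Jacobian of rank $0$, so the rational points $(\pm 1,0)$ are found by a routine \texttt{Chabauty0} computation rather than requiring a separate proposition or a full Chabauty--Coleman/descent argument.
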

\begin{proof}
Let $X$ be the curve in $\A^3=\Spec\Q[t,x,y]$ defined by the equations
\[
\begin{cases}
2x^2=y-p(t),\\
y^2=p(t)^2-4q(t).
\end{cases}
\]

There is a projection map $X\to \calA_5$ given by $(t,x,y)\mapsto(t,x)$, and every rational point on $\calA_5$ lifts to a rational point on $X$. 

Suppose that $(t_0,x_0)\in\calA_5(\Q)$ lifts to $(t_0,x_0,y_0)\in X(\Q)$.  If $t_0=-5/4$, then $p(t_0)=q(t_0)=0$, so the equation $f(t_0,x_0)=0$ implies that $x_0=0$. Thus we have found the point $(-5/4,0)\in\calA_5(\Q)$.

 Assuming now that $t_0\ne-5/4$, we may define $z_0=y_0/\left(4(4t_0+5)\right)$.

From the equation $y_0^2=p(t_0)^2-4q(t_0)$ and the factorization
\[p(t)^2-4q(t)=16(4t+5)^2(16t^2 - 8t + 5)\]

it follows that $z_0^2=16t_0^2 - 8t_0 + 5$. Thus $(t_0,z_0)\in D(\Q)$, where $D$ is the curve defined in the proof of Lemma \ref{A4_curve_lem}. By \eqref{A4_D_curve_points}, either $(t_0,z_0)=(1/4,2)$ or $(t_0,z_0)=\psi(v)$ for some $v\ne\pm 1$. In the first case the equation $f(t_0,x_0)=0$ becomes $x_0^2(x_0^2+48)=0$, so $x_0=0$. This yields the point $(1/4,0)\in\calA_5(\Q)$.

Suppose now that $(t_0,z_0)=\psi(v)$ for some $v\ne\pm 1$. Expressing $t_0$, $z_0$, and $y_0$ in terms of $v$, the equation $2x_0^2=y_0-p(t_0)$ implies that
\[x_0^2(v^2 - 1)^4 =-16(v^2-1)(v^2 + 1)(3v^2 + 2v - 3).\]

Letting $w=x_0(v^2-1)^2/4$ we obtain
\[w^2 =-(v^2-1)(v^2 + 1)(3v^2 + 2v - 3).\]

The above equation defines a hyperelliptic curve whose Jacobian has rank 0. Computing the rational points on the curve we obtain only the two points $(\pm 1, 0)$. It follows that $v=\pm 1$, which is a contradiction. Hence the only rational points on $\calA_5$ are $(-5/4,0)$ and $(1/4,0)$.
\end{proof}

\begin{lem}\label{B2_curve_lem}
Let $f(t,x)=x^6+p(t)x^4-q(t)x^2+r(t)$, where
\begin{align*}
p(t) &= 4(64t^3 - 16t^2 - 36t + 45),\\
q(t) &= 16(4t - 3)(16t^2 - 8t + 5)(32t^2 + 60t + 45),\\
r(t) &= 64(16t^2 - 8t + 5)^2(64t^3 + 144t^2 + 108t + 135).
\end{align*}
Let $\calB_2$ be the affine curve defined by the equation $f(t,x)=0$. Then
\[\calB_2(\Q)=\{(-2,\pm 34)\}.\]
\end{lem}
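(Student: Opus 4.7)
The plan is to exploit the inclusion $B_2 \subset M_2$ in order to reduce $\calB_2$ to a problem on a rational base. The fixed fields satisfy $L_{M_2} \subseteq L_{B_2}$ with $[L_{B_2} : L_{M_2}] = [M_2 : B_2] = 2$, so there is a degree-2 covering of curves $\pi\colon \calB_2 \to \calC_2$. By Lemma~\ref{maximal_subgroup_curves_lem}, every rational point of $\calC_2$ has $t$-coordinate of the form $\eta(z)$ for some $z \in \Q^\ast$. Consequently, if $(t_0, x_0) \in \calB_2(\Q)$, then $t_0 = \eta(z_0)$ for some nonzero rational $z_0$. As a sanity check, the known point $t_0 = -2$ gives $z_0^3 - 5z_0 - 4 = (z_0+1)(z_0^2 - z_0 - 4) = 0$, whose unique rational root is $z_0 = -1$.

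First I would perform the substitution $t = \eta(z) = (4 - 3z - z^3)/(4z)$ into $f(t,x)$ and clear denominators by multiplying by a suitable power of $4z$, producing a polynomial $F(z,x) \in \Q[z,x]$ which is still of degree $6$ in $x$. Because $\pi$ is of degree $2$ and $\calC_2$ is rational, I expect $F(z,x)$ to split as a product of factors, one of which describes the graph of the section $\calC_2 \hookrightarrow \calB_2 \times_{\calC_2}\A^1$ pulled back to $\A^1_z$, and the remaining factor $F_1(z,x)$ carving out a curve birational to $\calB_2$ itself. After a change of variable $x \mapsto x(z,w)$ adapted to the quadratic extension $L_{B_2}/L_{M_2}$, this component should take the hyperelliptic form
\[
C \colon w^2 = P(z)
\]
for some polynomial $P(z) \in \Q[z]$.

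Next I would determine $C(\Q)$ using the methods of \S\ref{rational_points_section}. By analogy with every hyperelliptic curve that has appeared so far in this section, the expectation is that $\Jac(C)$ has rank $0$, so that $C(\Q)$ is a small explicit finite set obtainable after a $2$-descent plus torsion calculation. Each point of $C(\Q)$ would be pushed forward along the map $(z,w) \mapsto (\eta(z), x(z,w))$, and I would check which of them yield genuine rational points of $\calB_2$ (as opposed to points where the parametrization is undefined or violates the nonvanishing hypothesis $z \ne 0$, $z^2 \ne 1$, etc.). The point $z_0 = -1$ above the singular locus of the parametrization would have to be treated separately, and this is where $(-2, \pm 34)$ should appear.

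The main obstacle is the explicit identification of the hyperelliptic model: the substitution $t = \eta(z)$ yields polynomials of large degree in $z$, and the correct linear change of variables in $x$ needed to expose the form $w^2 = P(z)$ is not forced by the algebra alone. One has to use the fact that $B_2$ sits as an index-2 subgroup of $M_2$ to determine which element of $L_{B_2}$ generates the quadratic extension, and to arrange $P(z)$ of moderate enough degree for the rank-$0$ Jacobian verification of \S\ref{rational_points_section} to be tractable. Once the right model is in hand, the remaining arithmetic is a routine application of the techniques already used for $\calA_4$, $\calA_5$, and $\calC_5$.
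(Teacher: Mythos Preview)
Your outline is in the right spirit, but it both overlooks the simplest route and underestimates the hardest step.

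First, the reduction to a hyperelliptic model is much more direct than you suggest. You do not need to invoke the inclusion $B_2\subset M_2$ or substitute $t=\eta(z)$: the defining polynomial $f(t,x)=x^6+p(t)x^4-q(t)x^2+r(t)$ is visibly even in $x$, so the substitution $y=x^2$ immediately gives a degree-2 map from $\calB_2$ to the cubic curve
\[
C:\;y^3+p(t)y^2-q(t)y+r(t)=0.
\]
This cubic turns out to be rational, parametrized by
\[
\psi(v)=\Bigl(-\tfrac{v^3+3v^2+16}{4v^2},\;\tfrac{4(v^3+4)(v^4+8v^3+16v^2+64)}{v^4}\Bigr),
\]
and writing $x_0^2$ as the second coordinate gives, after clearing denominators with $w=v^2x_0/2$, the hyperelliptic model
\[
w^2=(v^3+4)(v^4+8v^3+16v^2+64).
\]
Your plan to pull back along $t=\eta(z)$ and then locate a quadratic factor of $F(z,x)$ would eventually reach an isomorphic model, but with more work; and your description of the expected factorization is off, since there is no rational section $\calC_2\hookrightarrow\calB_2$ to produce a linear factor in $x$.

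Second, and more seriously, your expectation that the Jacobian of the resulting hyperelliptic curve has rank $0$ is wrong. The curve above has genus $3$, and a $2$-descent shows its Jacobian has rank exactly $1$ (the class $[\infty-(0,16)]$ has infinite order). So this is not ``a routine application of the techniques already used for $\calA_4$, $\calA_5$, and $\calC_5$''; it requires a genuine Chabauty--Coleman computation at a good prime (the paper uses $p=11$) to prove that the only affine rational points are $(0,\pm 16)$ and $(4,\pm 272)$, whence $v=4$ and $(t_0,x_0)=(-2,\pm 34)$. This Chabauty step is the real content of the lemma and is carried out separately in the paper as Proposition~\ref{genus3_chabauty}.
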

\begin{proof}
Let $C$ be the curve in $\A^2=\Spec\Q[t,y]$ defined by the equation
\[y^3+p(t)y^2-q(t)y+r(t)=0.\]

Note that there is a map $\calB_2\to C$, $(t,x)\mapsto(t,x^2)$. The curve $C$ is parametrizable; a birational map $\A^1\dashedrightarrow C$ is given by 
\[\psi(v)=\left(-(v^3 + 3v^2 + 16)/(4v^2), 4(v^3 + 4)(v^4 + 8v^3 + 16v^2 + 64)/v^4\right).\]
Computing an inverse $\theta$ of $\psi$ we find that $\theta$ is defined at every rational point on $C$ different from $(0,-60)$, and that the pullback of 0 under $\theta$ contains no rational point on $C$.

Suppose that $(t_0,x_0)\in\calB_2(\Q)$. Clearly $(t_0,x_0^2)\ne (0,-60)$, so we may define $v=\theta(t_0,x_0^2)$. Since $v\ne 0$, we have $(t_0,x_0^2)=\psi(v)$; in particular
\[x_0^2=4(v^3 + 4)(v^4 + 8v^3 + 16v^2 + 64)/v^4.\]

Letting $w=v^2x_0/2$ we obtain $w^2=(v^3 + 4)(v^4 + 8v^3 + 16v^2 + 64)$. By Proposition \ref{genus3_chabauty} this implies that $v=4$, so $(t_0,x_0^2)=\psi(v)=(-2, 1156)$ and therefore $(t_0,x_0)=(-2,\pm 34)$. Hence $\calB_2(\Q)=\{(-2,\pm 34)\}$.
\end{proof}

\begin{lem}\label{B3_curve_lem}
Let $f(t,x)=x^6-p(t)x^4-q(t)x^2-r(t)$, where
\begin{align*}
p(t) &= (4t - 1)(4t - 9),\\
q(t) &= 8(4t - 3)(16t^2 - 8t + 5),\\
r(t) &= 16(16t^2 - 8t + 5)^2.
\end{align*}
Let $\calB_3$ be the affine curve defined by the equation $f(t,x)=0$. Then
\[\calB_3(\Q)=\{(0,\pm 2)\}.\]
\end{lem}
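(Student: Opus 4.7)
The plan is to follow the strategy used in the proof of Lemma \ref{B2_curve_lem}. Since $f(t,x)$ involves $x$ only through $x^2$, I would set $y = x^2$ and introduce the auxiliary curve $C \subset \A^2 = \Spec \Q[t,y]$ defined by
\[
y^3 - p(t)y^2 - q(t)y - r(t) = 0,
\]
together with the projection $\pi \colon \calB_3 \to C$, $(t,x) \mapsto (t, x^2)$. Every rational point on $\calB_3$ maps under $\pi$ to a rational point on $C$, so the problem reduces to computing $C(\Q)$ and deciding for which $(t_0, y_0) \in C(\Q)$ the value $y_0$ is a rational square.

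Next I would attempt to parametrize $C$. Based on the pattern in the preceding lemmas, I expect $C$ to be a rational curve, so using the \texttt{Parametrization} function in \textsc{Magma} I would compute a birational map $\psi \colon \A^1 \dashedrightarrow C$ of the form $\psi(v) = (\psi_1(v), \psi_2(v))$ together with an inverse $\theta$, and then check that every rational point on $C$, apart from a small explicit list of exceptions, has the form $\psi(v)$ for some $v \in \Q$. Combined with the relation $x_0^2 = \psi_2(v_0)$, and after scaling $x_0$ by a suitable polynomial in $v_0$ to clear denominators, this yields an equation of the form $w^2 = F(v_0)$, where $F(v) \in \Q[v]$ defines a hyperelliptic curve $H$.

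The main obstacle will be determining $H(\Q)$. I expect to invoke the machinery of \S\ref{rational_points_section}: either a rank-zero Jacobian computation (as used for $\calC_5$ in Lemma \ref{maximal_subgroup_curves_lem}) or an explicit Chabauty argument (as used for $\calB_2$ via Proposition \ref{genus3_chabauty}). Once $H(\Q)$ has been pinned down, I would trace each rational point back through $\psi$ and check whether the corresponding $y_0$ is a square, arriving at only the two claimed points $(0, \pm 2)$ on $\calB_3$, which one verifies directly by substitution into $f$.
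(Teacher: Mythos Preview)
Your proposal is correct and follows essentially the same route as the paper's proof: map to the cubic $C$ via $y=x^2$, parametrize $C$ birationally, and reduce to determining the rational points on an auxiliary curve $w^2=F(v)$. In the paper the resulting curve turns out to be the genus-one curve $w^2=5v^4+60v^3+268v^2+528v+388$, isomorphic to the elliptic curve 20a2 with exactly six rational points, so the final step is even easier than the Chabauty-type arguments you anticipated.
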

\begin{proof}
Let $C$ be the curve in $\A^2=\Spec\Q[t,y]$ defined by the equation
\[y^3-p(t)y^2-q(t)y-r(t)=0.\]

There is a birational map $\psi:\A^1\dashedrightarrow C$ given by 
\[\psi(v)=\left(-\frac{(v + 4)(5v^2 + 25v + 32)}{4(v + 2)^2(v + 3)}, \frac{4(5v^4 + 60v^3 + 268v^2 + 528v + 388)}{(v + 2)^2(v + 3)^2}\right).\]

Computing an inverse for $\psi$ we obtain the map $\theta:C\dashedrightarrow\A^1$ defined by
\[\theta(t,y)=\frac{y^2 - (16t^2 - 24t - 35)y - 256t^3 - 64t^2 + 16t - 60}{8(16t^2 - 8t + 5 - 2y)}.\]

Outside the set $\{(-5/4, 20), (-9/4, 52)\}$, $\theta$ is defined at every rational point on $C$, and there is no rational point on $C$ mapped by $\theta$ to $-2$ or $-3$. 

Suppose that $(t_0,x_0)\in\calB_3(\Q)$. Then $(t_0,x_0^2)\in C(\Q)$ and clearly
\[(t_0,x_0^2)\notin\{(-5/4, 20), (-9/4, 52)\},\]

so we may define $v=\theta(t_0,x_0^2)$ and then $v\ne-2,-3$. Hence $(t_0,x_0^2)=\psi(v)$; in particular
\[x_0^2=\frac{4(5v^4 + 60v^3 + 268v^2 + 528v + 388)}{(v + 2)^2(v + 3)^2}.\]

Letting $w=x_0(v+2)(v+3)/2$ we obtain
\[w^2=5v^4 + 60v^3 + 268v^2 + 528v + 388.\]

The hyperelliptic curve $X$ defined by this equation is isomorphic to the elliptic curve with Cremona label 20a2 and therefore has exactly six rational points. A search for points on $X$ yields
\[X(\Q)=\{(-2,\pm 2), (-3,\pm 1), (-4,\pm 2)\}.\]

It follows that $v=-4$, so $(t_0,x_0^2)=\psi(-4)=(0,4)$ and therefore $(t_0,x_0)=(0,\pm 2)$. Hence $\calB_3(\Q)=\{(0,\pm 2)\}$.
\end{proof}

\begin{lem}\label{B4_curve_lem}
Let $f(t,x)=x^6-p(t)x^4+q(t)x^2+r(t)$, where
\begin{align*}
p(t) &= 2(4t - 9)(4t + 3),\\
q(t) &= 256t^4 - 1792t^3 - 2592t^2 - 432t - 1431,\\
r(t) &= 16(16t^2 - 8t + 5)(64t^3 + 144t^2 + 108t + 135).
\end{align*}
Let $\calB_4$ be the affine curve defined by the equation $f(t,x)=0$. Then
\[\calB_4(\Q)=\{(-5,\pm 20)\}.\]
\end{lem}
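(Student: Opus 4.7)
The plan is to follow exactly the template of Lemmas \ref{B2_curve_lem} and \ref{B3_curve_lem}, exploiting the fact that $f(t,x)$ is a polynomial in $x^{2}$. First I would pass to the intermediate curve $C\subset\A^{2}=\Spec\Q[t,y]$ defined by
\[
y^{3}-p(t)y^{2}+q(t)y+r(t)=0,
\]
via the map $\calB_{4}\to C$, $(t,x)\mapsto(t,x^{2})$. The known point $(-5,\pm 20)\in\calB_{4}(\Q)$ yields the rational point $(-5,400)\in C(\Q)$, and I expect $C$ to be a rational curve. Using the \texttt{Parametrization} function in \textsc{Magma}, I would compute a birational parametrization $\psi:\A^{1}\dashedrightarrow C$ together with an inverse $\theta:C\dashedrightarrow\A^{1}$, and identify the exceptional locus of $\theta$ (a finite set of rational points on $C$ which can be checked by hand against the equation of $\calB_{4}$).

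Next, for any $(t_{0},x_{0})\in\calB_{4}(\Q)$ not in the exceptional set, setting $v=\theta(t_{0},x_{0}^{2})$ gives $(t_{0},x_{0}^{2})=\psi(v)$; the second coordinate of $\psi(v)$ is a rational function $a(v)/d(v)^{2}$, so we obtain an equation
\[
x_{0}^{2}=a(v)/d(v)^{2}.
\]
Clearing the denominator by substituting $w=d(v)\,x_{0}$, I arrive at an equation $w^{2}=a(v)$ defining a hyperelliptic curve $X$. The main task is then to determine $X(\Q)$; the expected point $v\in\Q$ corresponding to $(-5,400)$ must appear among them, and all other rational points must give $x_{0}^{2}$ not a rational square (or give degenerate fibers) so that the only solutions are $(-5,\pm 20)$.

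The main obstacle will be determining $X(\Q)$. If $\Jac(X)$ has rank $0$, this is a routine \textsc{Magma} computation via the \texttt{Chabauty0} function; otherwise I would need a Chabauty-style argument analogous to the ones used in Propositions \ref{genus3_chabauty} and \ref{genus4_curve}, which are invoked in the preceding lemmas for exactly this purpose. Based on the pattern in Lemmas \ref{B2_curve_lem} and \ref{B3_curve_lem} (whose analogous curves had genus $3$ and genus $1$ respectively), I expect $X$ here to have genus in the range $2$--$3$ and to be tractable by the methods developed in \S\ref{rational_points_section}. Once $X(\Q)$ is determined, back-substitution through $\psi$ and through $y=x^{2}$ finishes the proof.
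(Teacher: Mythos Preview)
Your proposal is correct and follows essentially the same route as the paper: pass to the cubic curve $C$ via $y=x^{2}$, parametrize $C$ rationally, and reduce to a hyperelliptic curve in the parameter $v$. In the paper the resulting curve is $w^{2}=(2v^{3}-6v^{2}+6v-1)(v^{2}-2v+2)$, of genus~2 with Jacobian of rank~1, and its rational points are found with \textsc{Magma}'s built-in \texttt{Chabauty} function for this case (rather than the hand-crafted Chabauty--Coleman arguments of Propositions~\ref{genus3_chabauty} and~\ref{genus4_chabauty}); the only admissible value is $v=3/2$, giving $(t_{0},x_{0}^{2})=(-5,400)$.
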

\begin{proof}
Let $C$ be the curve in $\A^2=\Spec\Q[t,y]$ defined by the equation
\[y^3-p(t)y^2+q(t)y+r(t)=0.\]

The curve $C$ is parametrizable; a pair of inverse maps $\theta:C\dashedrightarrow\A^1$ and $\psi:\A^1\dashedrightarrow C$ is given by
\[\theta(t,y)=\frac{(4t + 9)y - 64t^3 - 240t^2 - 252t + 63}{2(16t^2 - 72t + 81 - 3y)},\]

\[\psi(v)=\left(\frac{-(v + 1)(2v^2 - 5v + 5)}{4(v - 1)^2},\frac{16(2v^3 - 6v^2 + 6v - 1)(v^2 - 2v + 2)}{(v - 1)^4}\right).\]

We find that $\theta$ is defined at every rational point on $C$, and there is no such point mapping to $v=1$. Hence every rational point on $C$ has the form $\psi(v)$ for some $v\in\Q$.

Suppose that $(t_0,x_0)\in\calB_4(\Q)$. Then $(t_0,x_0^2)\in C(\Q)$, so there exists $v\in\Q$, $v\ne 1$, such that $(t_0,x_0^2)=\psi(v)$; in particular
\[x_0^2=\frac{16(2v^3 - 6v^2 + 6v - 1)(v^2 - 2v + 2)}{(v - 1)^4}.\]

Letting $w=x_0(v-1)^2/4$ we obtain
\[w^2=(2v^3 - 6v^2 + 6v - 1)(v^2 - 2v + 2).\]

This equation defines a hyperelliptic curve of genus 2 whose Jacobian has rank 1. A Chabauty computation\footnote{For curves of genus 2 whose Jacobian has rank 1, the set of rational points can be determined using the \texttt{Chabauty} function in \textsc{Magma}; see \S\ref{rational_points_section} for further details.} shows that the affine rational points on the curve are $(1,\pm 1)$ and $(3/2,\pm 5/4)$. It follows that $v=3/2$ and therefore $(t_0,x_0^2)=\psi(3/2)=(-5,400)$, so $(t_0,x_0)=(-5,\pm 20)$. Hence $\calB_4(\Q)=\{(-5,\pm 20)\}$.
\end{proof}

\begin{lem}\label{B7_curve_lem}
Let $f(t,x)=x^6 + 4tx^4 - 4x^3 - 4tx^2 - 1$ and let $\calB_7$ be the affine curve defined by the equation $f(t,x)=0$. Then
\[\calB_7(\Q)=\{(\rho(z),z):z\in\Q\setminus\{0,\pm 1\}\},\]

where $\rho$ is the rational function defined in \eqref{rational_fncn_def}.
\end{lem}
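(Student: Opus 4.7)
The key observation is that the defining equation of $\calB_7$ is linear in $t$. Rearranging
\[x^6 + 4tx^4 - 4x^3 - 4tx^2 - 1 = 0\]
by collecting the $t$-terms yields $4t\cdot x^2(x^2-1) = 1 + 4x^3 - x^6$. So whenever $x \in \Q$ satisfies $x \ne 0, \pm 1$, the equation can be solved uniquely for $t$, giving
\[t = \frac{1 + 4x^3 - x^6}{4x^2(x^2-1)} = \rho(x).\]
This already shows that every pair $(\rho(z), z)$ with $z \in \Q \setminus \{0, \pm 1\}$ lies on $\calB_7$, and conversely that any rational point $(t_0, x_0) \in \calB_7(\Q)$ with $x_0 \notin \{0, \pm 1\}$ must have the form $(\rho(x_0), x_0)$.

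It remains to rule out rational points whose $x$-coordinate is $0$, $1$, or $-1$. This is a direct substitution: $f(t, 0) = -1$, $f(t, 1) = -4$, and $f(t, -1) = 4$, so none of these values of $x$ yields a rational (in fact, any) point on $\calB_7$.

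There is no hard step here; the only thing to be careful about is that the denominator $4x^2(x^2-1)$ vanishes precisely at the excluded values $x \in \{0, \pm 1\}$, which is exactly where the linear-in-$t$ reduction fails and where one must check the exceptional cases by hand. Combining the two observations gives the claimed description of $\calB_7(\Q)$.
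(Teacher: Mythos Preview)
Your proof is correct and takes essentially the same approach as the paper: the paper's one-line proof simply observes that $f(t,x)=0$ can be solved for $t$ to give $t=\rho(x)$. Your version is more careful in that you explicitly verify the excluded values $x\in\{0,\pm1\}$ yield no points on $\calB_7$, which the paper leaves implicit.
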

\begin{proof}
The equation $f(t,x)=0$ can be solved for $t$ to obtain $t=\rho(x)$.
\end{proof}

Having determined the rational points on all the necessary curves, we proceed to the proof of our main result.

\subsection{Classification of Galois groups}\label{group_classification_section}

In this section we apply the techniques of \S\ref{hit_section} to study the specializations of the polynomial $\Phi_4(t,x)$. We will use here much of the same notation as in \S\ref{hit_section}. Thus, $\Delta(t)\in\Q[t]$ is the discriminant of $\Phi_4$, $N/\Q(t)$ is a splitting field of $\Phi_4$, and $G=\Gal(N/\Q(t))$. For $c\in\Q$, $\Phi_{4,c}$ denotes the specialized polynomial $\Phi_4(c,x)$, and $G_c$, $\calF_c$ denote the Galois group and factorization type of $\Phi_{4,c}$. The ideal $(t-c)\subset\Q[t]$ is denoted by $\p_c$. If $\P$ is a prime of $N$, then $G_{\P}$ denotes the decomposition group of $\P$ over $\Q(t)$. Finally, the exceptional set of $\Phi_4$ is the set $\calE(\Phi_4)$ consisting of all rational numbers $c$ for which either $\Phi_{4,c}$ is reducible or $G_c$ is not isomorphic to $G$.

The rational functions $\lambda, \eta, \rho$ defined in \eqref{rational_fncn_def} will play a prominent role in what follows. Recall that $\Im\lambda$ denotes the set of all rational numbers of the form $\lambda(r)$ where $r\in\Q^{\ast}$, and similar notation applies to $\eta$ and $\rho$.

\begin{prop}\label{Phi4_exceptional_set_prop}
The exceptional set of $\Phi_4$ is given by
\[\calE(\Phi_4)=\Im\lambda\cup\Im\eta\cup\{-5/2\}.\]
\end{prop}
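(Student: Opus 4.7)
The plan is to apply Proposition \ref{HIT_exceptional_set_prop} to $P=\Phi_4(t,x)$ using the defining polynomials $f_1,\ldots,f_5$ of the fixed fields of the maximal subgroups $M_1,\ldots,M_5$ that were exhibited just before Lemma \ref{maximal_subgroup_curves_lem}, and then to read off the description of $\calE(\Phi_4)$ from that lemma. Since $\Phi_4$ is monic in $x$, the leading coefficient is $\ell(t)\equiv 1$, so the hypothesis of Proposition \ref{HIT_exceptional_set_prop} becomes $\Delta(c)\cdot\prod_{i=1}^{5}\disc f_i(c,x)\neq 0$. Let $B\subset\Q$ denote the finite set on which this product vanishes; $B$ can be enumerated explicitly since $\Delta(t)$ and each $\disc f_i(t,x)$ are concrete polynomials.

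For $c\in\Q\setminus B$, Proposition \ref{HIT_exceptional_set_prop} gives $c\in\calE(\Phi_4)$ if and only if some $f_i(c,x)$ has a rational root, i.e., $(c,x_0)\in\calC_i(\Q)$ for some $i$ and some $x_0\in\Q$. Lemma \ref{maximal_subgroup_curves_lem} identifies the $t$-projections of $\calC_1(\Q),\ldots,\calC_5(\Q)$ as $\Im\lambda$, $\Im\eta$, $\{0,-5/2\}$, $\emptyset$, and $\emptyset$ respectively. Since $\eta(1)=(4-3-1)/4=0$, the point $0$ already lies in $\Im\eta$, so the union of $t$-projections is $\Im\lambda\cup\Im\eta\cup\{-5/2\}$. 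This establishes the claim for every $c\notin B$.

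It remains to handle the finitely many $c\in B$ individually, checking for each whether $c\in\calE(\Phi_4)$ and whether $c\in\Im\lambda\cup\Im\eta\cup\{-5/2\}$. Membership in $\Im\lambda$ reduces to checking that $64c^2-32c+20$ is a nonzero rational square (the discriminant in $z$ of the quadratic obtained by clearing denominators in $c=\lambda(z)$), membership in $\Im\eta$ to checking that the cubic in $z$ obtained by clearing denominators in $c=\eta(z)$ has a rational root $z\neq 0$, and membership in $\{-5/2\}$ is trivial. As for membership in $\calE(\Phi_4)$: if $\Delta(c)=0$ then $\Phi_{4,c}$ has a repeated root and is therefore reducible in $\Q[x]$, so $c\in\calE(\Phi_4)$ automatically; if instead $\Delta(c)\neq 0$, one computes $G_c$ algorithmically (e.g.\ via the \texttt{GaloisGroup} function in \textsc{Magma}) to decide whether $G_c\cong G$. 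The main obstacle is this bookkeeping over $B$: one must produce the explicit factorizations of $\Delta(t)$ and the $\disc f_i(t,x)$ to enumerate $B$, then carry out the paired check above for each element, verifying that every bad $c$ lying in $\calE(\Phi_4)$ appears in $\Im\lambda\cup\Im\eta\cup\{-5/2\}$ and conversely. Combining this with the generic result completes the equality.
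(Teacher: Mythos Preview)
Your approach is essentially identical to the paper's. The paper streamlines the final bookkeeping by observing that none of the discriminants $\disc f_i(t,x)$ has a rational root and that $-5/4$ is the unique rational root of $\Delta(t)$, so in fact $B=\{-5/4\}$; the single remaining case is then dispatched by noting that $\Phi_4(-5/4,x)$ is reducible and that $-5/4=\eta(2)\in\Im\eta$.
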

\begin{proof}
We apply Proposition \ref{HIT_exceptional_set_prop} to the polynomial $\Phi_4$. As seen in \S\S\ref{subgroup_lattice_section}-\ref{subgroup_curves_section}, the group $G$ has five maximal subgroups up to conjugacy, and their fixed fields are generated by the polynomials $f_1,\ldots, f_5$ defined before Lemma \ref{maximal_subgroup_curves_lem}. The only rational root of $\Delta(t)$ is $-5/4$, and the discriminants of the $f_i$ have no rational roots. Thus, for $c\in\Q\setminus\{-5/4\}$ we know that $c$ is in $\calE(\Phi_4)$ if and only if one of the polynomials $f_i(c,x)$ has a rational root, or equivalently, $c$ is the first coordinate of a rational point on one of the curves $f_i(t,x)=0$. Hence, Lemma \ref{maximal_subgroup_curves_lem} implies that
\[c\in\calE(\Phi_4)\iff c\in\Im\lambda\cup\Im\eta\cup\{0,-5/2\}.\]

This equivalence also holds for $c=-5/4$, since the polynomial $\Phi_4(-5/4, x)$ is reducible and $-5/4=\eta(2)\in\Im\eta$. To prove the proposition it remains only to observe that $0=\eta(1)\in\Im\eta$.
\end{proof}

\begin{lem}\label{lambda_eta_images_lem}
The intersection of $\Im\lambda$ and $\Im\eta$ is given by
\[\Im\lambda\cap\Im\eta=\{19/16\}.\]
\end{lem}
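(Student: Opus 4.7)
The plan is to parametrize the condition $\lambda(r)=\eta(s)$ by a single hyperelliptic curve and then apply the rational-point techniques of \S\ref{rational_points_section}. Suppose $c \in \Im\lambda \cap \Im\eta$; then there exist $r, s \in \Q^*$ with $\lambda(r) = \eta(s) = c$. Cross-multiplying and simplifying, this is equivalent to
\[s r^2 + 2(s^3 + 4s - 4)\,r - 4s = 0.\]
Viewed as a quadratic in $r$ with $s \in \Q^*$ fixed, rationality of $r$ is equivalent to the discriminant $4\bigl((s^3 + 4s - 4)^2 + 4s^2\bigr)$ being a rational square. Hence every element of the intersection yields a rational point $(s,y)$ with $s \ne 0$ on the affine hyperelliptic curve
\[\calX : y^2 = s^6 + 8s^4 - 8s^3 + 20s^2 - 32s + 16,\]
which has genus $2$.

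Next I would check the point $(1/2,\, 17/8) \in \calX(\Q)$: substituting $s = 1/2$ into the quadratic produces the two rational roots $r = 8$ and $r = -1/2$, each giving $\lambda(r) = \eta(1/2) = 19/16$. This confirms $19/16 \in \Im\lambda \cap \Im\eta$. The other easily-spotted rational point $(0,\pm 4)$ corresponds to $s = 0$, which is excluded (and in fact forces $r = 0$ in the quadratic). Thus the lemma reduces to verifying that $\calX(\Q)$ contains no further affine rational points beyond $(0, \pm 4)$ and $(1/2, \pm 17/8)$.

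The main obstacle is therefore the determination of $\calX(\Q)$. Assuming $\Jac(\calX)$ has Mordell--Weil rank at most $1$, the standard Chabauty method (as invoked in Lemma \ref{B4_curve_lem}, for instance) would apply directly and yield $\calX(\Q) = \{\infty_\pm, (0, \pm 4), (1/2, \pm 17/8)\}$. If the rank turned out to be $2$, the fallback would be to seek an involution of $\calX$ and descend to a lower-rank quotient curve, or apply the Mordell--Weil sieve. Either way, once $\calX(\Q)$ is pinned down, $s = 1/2$ is the unique value in $\Q^*$ that survives, giving $c = 19/16$ and completing the proof.
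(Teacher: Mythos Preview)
Your approach is correct and is essentially identical to the paper's: the paper clears denominators to obtain $2uv^3+vu^2+8uv-8u-4v=0$, sets $y=v^3+uv+4v-4$ (which is exactly the square root of your quadratic discriminant), and lands on the same genus-$2$ curve $y^2=(v^2+4)(v^4+4v^2-8v+4)=v^6+8v^4-8v^3+20v^2-32v+16$. The paper confirms that the Jacobian has rank $1$, so your hedging is unnecessary, and the Chabauty computation yields precisely the affine points $(0,\pm4)$ and $(1/2,\pm17/8)$ you list.
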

\begin{proof}
Note that $19/16=\lambda(8)=\eta(1/2)$, so $19/16\in\Im\lambda\cap\Im\eta$. Suppose now that $u,v\in\Q^{\ast}$ satisfy $\lambda(u)=\eta(v)$. Then we have
\[2uv^3+vu^2+8uv-8u-4v=0.\]

Letting $y=v^3 + uv + 4v - 4$, the above equation implies that
\[y^2=(v^2 + 4)(v^4 + 4v^2 - 8v + 4).\]

This equation defines a hyperelliptic curve of genus 2 whose Jacobian has rank 1. A Chabauty computation shows that the only affine rational points on the curve are $(0,\pm 4)$ and $(1/2,\pm17/8)$. It follows that $v=1/2$ and therefore $\eta(v)=19/16$.
\end{proof}

We can now begin to determine the structure of the groups $G_c$ and the factorization types $\calF_c$ for all $c\in\calE(\Phi_4)$.

\begin{prop}\label{lambda_image_prop}
Suppose that $c\in\Im(\lambda)\setminus\{\frac{19}{16}\}$.
Then $G_c\equiv M_1$ and $\calF_c=\{12\}$.
\end{prop}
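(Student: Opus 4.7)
The plan is to apply Proposition~\ref{rational_function_galois_group_prop} with $\mu=\lambda$. Writing $\lambda(z)=a(z)/b(z)$ for $a(z)=-(z^2+2z-4)$ and $b(z)=-8z$, the polynomial
\[
f(t,x)=t\,b(x)-a(x)=x^2+(2-8t)x-4
\]
is monic in $x$, and its $x$-discriminant $4(16t^2-8t+5)$ has no rational root, so $f$ is irreducible over $\Q(t)$. Since $f$ and the polynomial $f_1$ of Lemma~\ref{maximal_subgroup_curves_lem} both generate the quadratic extension $\Q(t)\bigl(\sqrt{16t^2-8t+5}\bigr)$ of $\Q(t)$, the stabilizer $H:=\Gal(N/\Q(t)(\theta))$ of a root $\theta$ of $f$ is $G$-conjugate to $M_1$, which we identify with $M_1$. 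Writing $c=\lambda(v)$ for some $v\in\Q^{\ast}$, part~(1) of the proposition supplies a prime $\P$ of $N$ over $\p_c$ with $G_{\P}\subseteq M_1$.

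The crux is to upgrade this inclusion to equality $G_{\P}=M_1$. Assuming $G_{\P}\subsetneq M_1$, we may conjugate $\P$ so that $G_{\P}$ lies in one of the representatives $A_1,\ldots,A_5$ of the $M_1$-conjugacy classes of maximal subgroups of $M_1$. For $A_1,A_2,A_3$, the $G$-conjugacies in~\eqref{conjugate_pairs} with $C_5\subseteq M_3$, $B_6\subseteq M_2$, $D_3\subseteq M_4$ translate, upon further conjugating $\P$, to inclusions $G_{\P'}\subseteq M_3, M_2, M_4$ respectively; Proposition~\ref{HIT_subgroup_curve_prop} applied to $f_3, f_2, f_4$ (whose $x$-discriminants have no rational roots) then forces $c$ to be a first coordinate of a rational point on $\calC_3, \calC_2$, or $\calC_4$. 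But $c\in\Im\lambda\setminus\{19/16\}$ excludes each: $\calC_4(\Q)=\emptyset$, Lemma~\ref{lambda_eta_images_lem} combined with $c\ne 19/16$ forbids $c\in\Im\eta$, and the quick check that $\lambda(z)\in\{0,-5/2\}$ has only irrational solutions $z$ rules out the rational points on $\calC_3$.

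For $A_4$ and $A_5$, Lemmas~\ref{A4_curve_lem} and~\ref{A5_curve_lem} give $\calA_4(\Q)=\calA_5(\Q)=\{(-5/4,0),(1/4,0)\}$, and since $-5/4\notin\Im\lambda$ the sole surviving candidate is $c=1/4$. The $x$-discriminants of both defining polynomials vanish at $c=1/4$, so Proposition~\ref{HIT_subgroup_curve_prop} does not apply directly; this is the principal obstacle of the proof, and must be settled by a dedicated verification (for instance, by specializing $\Phi_4(1/4,x)$ explicitly and confirming that its Galois group is isomorphic to $M_1$, thereby ruling out $G_{\P}\subseteq A_4$ or $A_5$).

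Once $G_{\P}=M_1$ is established for all $c\in\Im\lambda\setminus\{19/16\}$, part~(3) of Proposition~\ref{rational_function_galois_group_prop} applies to $g(t,x)=(-8t)^6\,\Phi_4(\lambda(t),x)\in\Q[t][x]$: we have $\disc g(v,x)\ne 0$ because $\Delta(c)\ne 0$ (as $-5/4\notin\Im\lambda$), and part~(2) gives $\Gal(g)\equiv M_1$. The transitive action of $M_1\subseteq\calW$ on the twelve roots of $\Phi_4$, a consequence of the explicit structure of $M_1$ as an index-$2$ subgroup of $\calW$, makes $g$ irreducible of degree $12$, so $\calF(g)=\{12\}$. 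Hence $G_c\equiv M_1$ and $\calF_c=\{12\}$, as required.
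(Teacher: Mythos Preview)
Your argument is correct and follows essentially the same path as the paper's proof: apply Proposition~\ref{rational_function_galois_group_prop} with $\mu=\lambda$, then rule out $G_{\P}\subsetneq M_1$ by routing each $A_i$ through the conjugacies~\eqref{conjugate_pairs} and the rational-point lemmas, handling the residual case $c=1/4$ by a direct Galois computation. One small correction of emphasis: the obstacle at $c=1/4$ is not really that the discriminants of the $A_4,A_5$ polynomials vanish there, but that $1/4$ \emph{is} the $t$-coordinate of a rational point on $\calA_4$ and $\calA_5$, so the curve-point method cannot yield a contradiction for this value regardless; the paper resolves this exactly as you suggest, by checking directly that $|G_{1/4}|=192$.
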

\begin{proof}
Recall that $\lambda(z)=(z^2 + 2z - 4)/(8z)$. As seen in \S\ref{subgroup_curves_section}, the fixed field of $M_1$ has the form $\Q(t)(\alpha)$, where $\alpha$ is a root of the polynomial $f_1(t,x)= x^2 - 64t^2 + 32t - 20$. Letting $\theta=4t+\alpha/2-1$ we then have $\lambda(\theta)=t$ and $\Q(t)(\theta)=\Q(t)(\alpha)$.

We now apply Proposition \ref{rational_function_galois_group_prop} with $\mu=\lambda$ and $P=\Phi_4$. Note that the polynomial $f(t,x)=t(-8x)-(4-2x-x^2)$ is monic and irreducible and has $\theta$ as a root. In the notation of Proposition \ref{rational_function_galois_group_prop} we have $H=M_1$.

Since $c\in\Im(\lambda)$, we may write $c=\lambda(v)$ for some $v\in\Q^{\ast}$. It is easy to see that $c\ne-5/4$, so $\Delta(c)\ne 0$. Moreover, $\disc f(t,x)=(2 - 8t)^2 + 16$ has no rational root, so $\disc f(c,x)\ne 0$. Hence, by part (1) of Proposition \ref{rational_function_galois_group_prop} there exists a prime $\P$ of $N$ dividing $\p_c$ such that $G_{\P}\subseteq M_1$.

Letting $g(t,x)=(8t)^6\cdot\Phi_4(\lambda(t),x)$ we find that $g\in\Q[t][x]$ is irreducible of degree 12, and the only rational root of $\disc g$ is 0. Hence $\disc g(v,x)\ne 0$.

We claim that $G_{\P}=M_1$. Assuming this for the moment, part (3) of Proposition \ref{rational_function_galois_group_prop} implies that $G_c\equiv M_1$ and $\calF_c=\calF(g)=\{12\}$, which is what we wish to prove. Thus, it remains only to show that $G_{\P}=M_1$. 

In what follows we will replace $\P$ by a conjugate ideal whenever necessary. Suppose by contradiction that $G_{\P}$ is a proper subgroup of $M_1$. Then $G_{\P}$ is contained in one of the groups $A_1,\ldots, A_5$. We consider each case separately.

If $G_{\P}\subseteq A_1$, then by \eqref{conjugate_pairs} we have $G_{\P}\subseteq C_5$ and hence $G_{\P}\subseteq M_3$. We now apply Proposition \ref{HIT_subgroup_curve_prop} with $H=M_3$. Recall that the fixed field of $M_3$ is generated by a root of the polynomial $f_3(t,x)$ defined in  \S\ref{subgroup_curves_section}. The discriminant of $f_3$ has no rational root, so $\Delta(c)\cdot\disc f(c,x)\ne 0$. By Proposition \ref{HIT_subgroup_curve_prop}, the polynomial $f_3(c,x)$ must have a rational root. Thus \eqref{f3_curve_points} implies that $c=0$ or $c=-5/2$. However, this contradicts the hypotheses since neither 0 nor $-5/2$ are in $\Im\lambda$.

If $G_{\P}\subseteq A_2$, then by \eqref{conjugate_pairs} we have $G_{\P}\subseteq B_6$ and hence $G_{\P}\subseteq M_2$, so $f_2(c,x)$ has a rational root. Since also $G_{\P}\subseteq M_1$, then $f_1(c,x)$ has a rational root. Now \eqref{f1_curve_points} and \eqref{f2_curve_points} imply that $c\in\Im\lambda\cap\Im\eta$, so $c=19/16$ by Lemma \ref{lambda_eta_images_lem}. However, this again contradicts the hypotheses.

If $G_{\P}\subseteq A_3$, then by \eqref{conjugate_pairs} we have $G_{\P}\subseteq D_3$ and hence $G_{\P}\subseteq M_4$, so $f_4(c,x)$ has a rational root. However, this is impossible by \eqref{f4,5_curve_points}.

Finally, if $G_{\P}$ is contained in either $A_4$ or $A_5$, then Lemmas \ref{A4_curve_lem} and \ref{A5_curve_lem} imply that $c=-5/4$ or $c=1/4$. We have already observed that $c$ cannot be $-5/4$, and for $c=1/4$ one can verify directly that $G_c$ has order 192. By Lemma \ref{HIT_lem} this implies that $G_{\P}$ has order 192, so cannot be contained in $A_4$ or $A_5$. Thus we have a contradiction.

Since every case has led to a contradiction, we conclude that $G_{\P}=M_1$, as claimed.
\end{proof}

Next we determine the Galois groups $G_c$ and factorization types $\calF_c$ when $c\in\Im\eta$. Note that $\rho(z)=\eta((z^2-1)/z)$, so that $\Im\rho\subseteq\Im\eta$. As we will see, the analysis when $c\in\Im\eta$ differs depending on whether $c\in\Im\rho$ or not.

\begin{prop}\label{eta_image_prop}
Suppose that $c\in\Im(\eta)\setminus\Im\rho$ and $c\notin\left\{-5,-2,-\frac{5}{4},0,\frac{19}{16}\right\}$. Then $G_c\equiv M_2$ and $\calF_c=\{8,4\}$.
\end{prop}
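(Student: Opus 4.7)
The plan is to mirror the proof of Proposition \ref{lambda_image_prop}, applying Proposition \ref{rational_function_galois_group_prop} with $\mu=\eta$ and $P=\Phi_4$. First I would identify a primitive element $\theta$ for the fixed field of $M_2$ satisfying $\eta(\theta)=t$: writing $\eta(z)=a(z)/b(z)$ with $a(z)=4-3z-z^3$ and $b(z)=4z$, the polynomial $f(t,x):=tb(x)-a(x)=x^3+(4t+3)x-4$ is monic and irreducible in $\Q[t][x]$, and using the defining relation for a root $\alpha$ of $f_2(t,x)=x^3+(4t+3)x+4$ a short calculation shows $\theta:=-\alpha$ is a root of $f$ with $\Q(t)(\theta)=\Q(t)(\alpha)$, which is the fixed field of $M_2$.

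For any $c=\eta(v)$ with $v\in\Q^{\ast}$, the exclusion $c\ne -5/4$ guarantees $\Delta(c)\ne 0$, while $\disc f(t,x)=-4(4t+3)^3-432$ has no rational root and so $\disc f(c,x)\ne 0$. Part (1) of Proposition \ref{rational_function_galois_group_prop} then produces a prime $\P$ of $N$ over $\p_c$ with $G_{\P}\subseteq M_2$. Next I would set $g(t,x)=(4t)^6\Phi_4(\eta(t),x)\in\Q[t][x]$ and verify by a direct factorization in \textsc{Magma} that $\calF(g)=\{8,4\}$ and $\disc g(v,x)\ne 0$. Assuming $G_{\P}=M_2$ can be established, part (3) of Proposition \ref{rational_function_galois_group_prop} yields $G_c\equiv M_2$ and $\calF_c=\calF(g)=\{8,4\}$, which is the desired conclusion.

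The bulk of the argument is the case analysis needed to exclude $G_{\P}$ from each of the seven maximal subgroups $B_1,\ldots,B_7$ of $M_2$, with $\P$ replaced by a conjugate ideal whenever convenient. The conjugacy relations listed in \eqref{conjugate_pairs} dispose of $B_1$, $B_5$, and $B_6$: containment in $B_1$ (respectively $B_5$) pushes $G_{\P}$ into $D_2\subseteq M_4$ (respectively $E_2\subseteq M_5$), so $f_4(c,x)$ or $f_5(c,x)$ would have a rational root, contradicting \eqref{f4,5_curve_points}; containment in $B_6$ pushes $G_{\P}$ into $A_2\subseteq M_1$, so $f_1(c,x)$ has a rational root and hence $c\in\Im\lambda\cap\Im\eta=\{19/16\}$ by Lemma \ref{lambda_eta_images_lem}, which is excluded. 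The remaining cases $B_2,B_3,B_4,B_7$ are handled directly by Lemmas \ref{B2_curve_lem}--\ref{B4_curve_lem} and \ref{B7_curve_lem}, which force $c\in\{-2,0,-5\}$ or $c\in\Im\rho$ respectively, each excluded by hypothesis.

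The main obstacle I anticipate is the bookkeeping in this case analysis: each conjugacy-based reduction requires carefully noting that $\P$ may be replaced by an appropriate $G$-conjugate so that $G_{\P}$ literally sits inside the named maximal subgroup of a different $M_i$, and the $B_6$ step in particular combines such a replacement with the hypothesis $c\in\Im\eta$ in order to invoke Lemma \ref{lambda_eta_images_lem}. The factorization identity $\calF(g)=\{8,4\}$ is the computational step that actually pins down the factorization type appearing in the conclusion, but it is a routine calculation in $\Q[t][x]$ rather than a conceptual difficulty.
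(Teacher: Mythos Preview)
Your proposal is correct and follows essentially the same argument as the paper's proof: the same primitive element $\theta=-\alpha$, the same application of Proposition \ref{rational_function_galois_group_prop} with $\mu=\eta$, the same factorization of $g(t,x)=(4t)^6\Phi_4(\eta(t),x)$ into an irreducible degree-8 factor and an irreducible degree-4 factor, and the same seven-case analysis on $B_1,\ldots,B_7$ using the conjugacy relations \eqref{conjugate_pairs} and Lemmas \ref{B2_curve_lem}--\ref{B7_curve_lem}. The only detail you left implicit that the paper spells out is why $\disc g(v,x)\ne 0$: the rational roots of $\disc g$ are $0$ and $2$, and $v=2$ is excluded since $\eta(2)=-5/4$.
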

\begin{proof}
Recall that $\eta(z)=(4-3z-z^3)/(4z)$. As seen in \S\ref{subgroup_curves_section}, the fixed field of $M_2$ has the form $\Q(t)(\alpha)$, where $\alpha$ is a root of the polynomial $f_2(t,x)= x^3 + (4t + 3)x + 4$. Letting $\theta=-\alpha$ we then have $\eta(\theta)=t$ and $\Q(t)(\theta)=\Q(t)(\alpha)$.

We now apply Proposition \ref{rational_function_galois_group_prop} with $\mu=\eta$ and $P=\Phi_4$. The polynomial $f(t,x)=t(4x)-(4-3x-x^3)$ is monic and irreducible and has $\theta$ as a root. Moreover, $\disc f$ has no rational root.

Since $c\in\Im(\eta)$ we may write $c=\eta(v)$ for some $v\in\Q^{\ast}$. Given that $c\ne-5/4$, we have $\Delta(c)\cdot\disc f(c,x)\ne 0$. Thus part (1) of Proposition \ref{rational_function_galois_group_prop} implies that there exists a prime $\P$ of $N$ dividing $\p_c$ such that $G_{\P}\subseteq M_2$.

 Letting $g(t,x)=(4t)^6\cdot\Phi_4(\eta(t),x)$ we find that $g$ factors as 
 \begin{equation}\label{8_4_factorization}
(4t)^6\cdot\Phi_4(\eta(t),x)=F(t,x)\cdot J(t,x),
\end{equation}

where $F(t,x)\in\Z[t][x]$ is irreducible of degree 8 and
\begin{multline*}
J(t,x) = 16t^2x^4 + 16t^3x^3 + (8t^3 - 24t^2 + 32t)x^2  -(4t^5 - 8t^4 + 20t^3- 32t^2)x \\ - t^6 + 2t^5 - 4t^4 + 6t^3 + 5t^2 - 8t + 16
\end{multline*}
is also irreducible. Hence $\calF(g)=\{8,4\}$. The only rational roots of $\disc g$ are 0 and 2, so $\disc g(v,x)\ne 0$ since $v=2$ would imply that $c=\eta(2)=-5/4$, a contradiction.

We claim that $G_{\P}=M_2$. Assuming this for the moment, part (3) of Proposition \ref{rational_function_galois_group_prop} implies that $G_c\equiv M_2$ and $\calF_c=\calF(g)=\{8,4\}$, which is what we wish to prove. Hence the proof will be complete if we show that $G_{\P}=M_2$. 

In what follows we will replace $\P$ by a conjugate ideal whenever necessary. Suppose by contradiction that $G_{\P}$ is a proper subgroup of $M_2$. Then $G_{\P}$ is contained in one of the groups $B_1,\ldots, B_7$.
 
If $G_{\P}\subseteq B_1$, then by \eqref{conjugate_pairs} we have $G_{\P}\subseteq D_2$ and hence $G_{\P}\subseteq M_4$. By Proposition \ref{HIT_subgroup_curve_prop} applied to $H=M_4$, this implies that the polynomial $f_4(c,x)$ has a rational root. However, this is impossible by \eqref{f4,5_curve_points}.
 
If $G_{\P}$ were contained in $B_2$, $B_3$, or $B_4$, then Lemmas \ref{B2_curve_lem}-\ref{B4_curve_lem} would imply that $c\in\{-5,-2,0\}$, which contradicts the hypotheses.

If $G_{\P}\subseteq B_5$, then by \eqref{conjugate_pairs} we have $G_{\P}\subseteq E_2$ and hence $G_{\P}\subseteq M_5$, so $f_5(c,x)$ has a rational root. However, this is impossible by \eqref{f4,5_curve_points}.

If $G_{\P}\subseteq B_6$, then by \eqref{conjugate_pairs} we have $G_{\P}\subseteq A_2$ and hence $G_{\P}\subseteq M_1$, so $f_1(c,x)$ has a rational root. Since also $G_{\P}\subseteq M_2$, the polynomial $f_2(c,x)$ has a rational root, so \eqref{f1_curve_points} and \eqref{f2_curve_points} imply that $c\in\Im\lambda\cap\Im\eta$. Thus $c=19/16$ by Lemma \ref{lambda_eta_images_lem}. However, this contradicts the hypotheses.

Finally, if $G_{\P}\subseteq B_7$, then Lemma \ref{B7_curve_lem} implies that $c\in\Im\rho$, a contradiction.

Since every case has led to a contradiction, we conclude that $G_{\P}=M_2$, as claimed.
\end{proof}

To complete our analysis of the groups $G_c$ and factorization types $\calF_c$ it remains to consider the case where $c\in\Im\rho$. In this case the group $G_c$ will be isomorphic to a subgroup of $B_7$, and we claim that in fact $G_c\equiv B_7$. To prove this we could proceed by contradiction as above and consider the curves corresponding to the maximal subgroups of $B_7$. Unfortunately, the problem of finding all the rational points on these curves becomes unmanageable, so we will take a slightly different approach. We begin by proving two auxiliary results.

\begin{lem}\label{degree_8_specialization_lem}
Let $F\in\Q[t,x]$ be defined by the factorization \eqref{8_4_factorization}. Then for all $v\in\Q\setminus\{0,\pm 1\}$ the polynomial $F_v=F(v,x)$ is irreducible and its Galois group satisfies $\Gal(F_v)\cong\Gal(F)\cong(32,11)$.
\end{lem}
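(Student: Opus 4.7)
The plan is to apply the Hilbert irreducibility framework of Section \ref{hit_section} directly to $F$, in parallel with the arguments already used in this section for $\Phi_4$. The first step is to verify computationally, using the algorithms of Remark \ref{HIT_algorithmic_rem}, that $\Gal(F) \cong (32,11)$; this is straightforward, since $F$ divides the polynomial $g$ whose Galois group was identified in Proposition \ref{rational_function_galois_group_prop}(2) with $\calH \cong M_2$, so $\Gal(F)$ is automatically a quotient of a known group of order $128$, and only its order and isomorphism type need to be pinned down via \texttt{GaloisGroup}. Once $\Gal(F) \cong (32,11)$ is established, Lemma \ref{HIT_lem}(2) reduces the lemma to showing that the exceptional set $\calE(F)$ is contained in $\{0, \pm 1\}$: irreducibility of $F_v$ then follows automatically from $\Gal(F_v) \cong \Gal(F)$, since $\calF(F) = \{8\}$.

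To describe $\calE(F)$, the plan is to enumerate the conjugacy classes of maximal subgroups of $(32,11)$. Since $(32,11)$ is a $2$-group of order $32$, every maximal subgroup has index $2$, so the associated fixed fields are quadratic extensions of $\Q(t)$ and are generated by roots of monic polynomials of the form $h_i(t,x) = x^2 - q_i(t) \in \Q[t][x]$, which can be computed via \texttt{GaloisSubgroup}. By Proposition \ref{HIT_exceptional_set_prop}, for $v \in \Q$ avoiding the finitely many roots of $\Delta_F(t) \cdot \ell_F(t) \cdot \prod_i \disc h_i(t,x)$, we have $v \in \calE(F)$ if and only if $q_i(v)$ is a rational square for some $i$; equivalently, if and only if $v$ is the $t$-coordinate of a rational point on one of the hyperelliptic curves $y^2 = q_i(t)$.

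The main obstacle lies in analyzing these hyperelliptic curves. The toolkit of Section \ref{subgroup_curves_section} applies: genus-$0$ components are handled by explicit parametrization, and positive-genus components whose Jacobians have rank $0$, or rank $1$ in genus $2$, are handled by direct enumeration or Chabauty as in Lemmas \ref{B4_curve_lem} and \ref{lambda_eta_images_lem}; a higher-rank Jacobian would require more delicate techniques such as the Mordell--Weil sieve. The expected outcome is that the combined set of $t$-coordinates of the rational points is exactly $\{0, \pm 1\}$. The few remaining values of $v$ at which some $\Delta_F$, $\ell_F$, or $\disc h_i$ vanishes are treated individually: for each such $v$ one either observes that $v \in \{0, \pm 1\}$ or verifies directly, by computing $\Gal(F_v)$, that it is isomorphic to $(32,11)$.
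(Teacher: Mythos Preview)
Your proposal is correct and follows essentially the same approach as the paper: compute $\Gal(F)\cong(32,11)$, enumerate its three index-$2$ maximal subgroups, write down the quadratic polynomials $p_i(t,x)=x^2-q_i(t)$ defining their fixed fields, and apply Proposition~\ref{HIT_exceptional_set_prop} to reduce to finding the rational points on the curves $y^2=q_i(t)$. The only refinement worth noting is that in the paper two of these curves turn out to be genus~$1$ (identified with the rank-$0$ elliptic curves 108a1 and 20a2), while the third is genus~$3$ with rank-$1$ Jacobian and is dispatched via the Chabauty--Coleman computation of Proposition~\ref{genus3_chabauty}---so the required toolkit slightly exceeds the genus-$2$ Chabauty you cite from \S\ref{subgroup_curves_section}, though it remains within the $r<g$ regime.
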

\begin{proof}
Factoring $F$ and computing its Galois group we find that $F$ is irreducible and $\Gal(F)\cong (32,11)$. The group $\Gal(F)$ has exactly three maximal subgroups, all of index 2. The fixed fields of these subgroups are generated by the polynomials
\begin{align*}
p_1(t,x)&= x^2 + 2^8t^5(x + 1024t^2),\\
p_2(t,x) &= x^2 - 2^{34}t^{14}(t^4 + 4t^2 - 8t + 4),\\
p_3(t,x) &= x^2 - 2^{50}t^{21}(t^3 - 16)(t^4 + 4t^2 - 8t + 4).\\
\end{align*}

\vspace{-5mm}
The discriminants of the $p_i$ are given by 
\begin{align*}
\disc p_1(t,x) &= 2^{16}t^7(t^3 - 16),\\
\disc p_2(t,x) &= 2^{36}t^{14}(t^4 + 4t^2 - 8t + 4),\\
\disc p_3(t,x) &= 2^{52}t^{21}(t^3-16)(t^4 + 4t^2 - 8t + 4).\\
\end{align*}

\vspace{-5mm}
We will now use Proposition \ref{HIT_exceptional_set_prop} to determine the exceptional set of $F$. A simple calculation shows that if $v\in\Q^{\ast}$, then $v$ is not a root of the discriminant of $F$ nor of its leading coefficient, and that $\disc p_i(v,x)\ne0$ for $i=1,2,3$. Hence Proposition \ref{HIT_exceptional_set_prop} implies that $v\in\calE(F)$ if and only if one of the polynomials $p_i(v,x)$ has a rational root.

If $p_1(v,x)$ has a rational root, then $\disc p_1(v,x)$ is a square, so there exists $y\in\Q$ such that $y^2=v(v^3-16)$. The hyperelliptic curve $X_1$ defined by this equation is isomorphic to the elliptic curve with Cremona label 108a1, which has exactly three rational points. Thus $X_1$ has three rational points, which must be the two points at infinity and the point $(0,0)$. Hence $v=0$, which is a contradiction. 

If $p_2(v,x)$ has a rational root, then there exists $y\in\Q$ such that
\[y^2=v^4 + 4v^2 - 8v + 4.\]

The hyperelliptic curve $X_2$ defined by this equation is isomorphic to the elliptic curve with Cremona label 20a2, which has exactly six rational points. Thus $X_2$ has six rational points, which must be the two points at infinity and the points $(0,\pm 2)$ and $(1,\pm 1)$. Since $v\ne 0$, this implies $v=1$.

If $p_3(v,x)$ has a rational root, then there exists $y\in\Q$ such that
\[y^2=v(v^3-16)(v^4 + 4v^2 - 8v + 4).\]

The hyperelliptic curve $X_3$ defined by this equation is isomorphic to the curve $C$ from Proposition \ref{genus3_chabauty}. Since $C$ has exactly five rational points, the same holds for $X_3$. A search for points on $X_3$ then shows that its only affine rational points are $(0,0)$ and $(-1,\pm 17)$. Hence $v=-1$.

We conclude from the above discussion that if $v\in\Q\setminus\{0,\pm 1\}$, then $v\notin\calE(F)$, and therefore $F_v$ is irreducible and $\Gal(F_v)\cong\Gal(F)$.
\end{proof}

\begin{prop}\label{rho_image_prop}
Suppose that $c\in\Im\rho\setminus\{-\frac{155}{72}\}$. Then $G_c\equiv B_7$ and $\calF_c=\{8,2,2\}$.
\end{prop}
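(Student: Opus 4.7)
The plan is to follow the template of Propositions \ref{lambda_image_prop} and \ref{eta_image_prop}. Write $c = \rho(v)$ with $v \in \Q \setminus \{0,\pm 1\}$ (the exclusion coming from the denominator of $\rho$). Apply Proposition \ref{rational_function_galois_group_prop} with $\mu = \rho$ and $P = \Phi_4$, using the monic irreducible polynomial $f(t,x) = x^6 + 4tx^4 - 4x^3 - 4tx^2 - 1$ from Lemma \ref{B7_curve_lem}, whose root $\theta$ generates the fixed field of $B_7$ and satisfies $\rho(\theta) = t$. After checking that $\Delta(c) \cdot \disc f(c,x) \ne 0$ (routine, using $-5/4 \notin \Im\rho$), part (1) produces a prime $\P$ of $N$ dividing $\p_c$ with $G_\P \subseteq B_7$. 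I would then set $g(t,x) = (4t^2(t^2-1))^6\,\Phi_4(\rho(t), x)$; a direct computation should give $g = g_1 g_2 g_3$ with $g_1$ irreducible of degree $8$ and $g_2, g_3$ irreducible of degree $2$, so $\calF(g) = \{8,2,2\}$, and by part (2), $\Gal(g) \equiv B_7$. Part (3) of that proposition finishes the argument the moment one knows $G_\P = B_7$; this last equality is the crux, since the author has already warned that enumerating the maximal subgroups of $B_7$ and their associated curves is infeasible.

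The new ingredient that replaces the subgroup enumeration is Lemma \ref{degree_8_specialization_lem}. Since $\rho(v) = \eta((v^2-1)/v)$, I would write $c = \eta(u)$ with $u = (v^2-1)/v$. The constraint $v \in \Q \setminus \{0, \pm 1\}$ forces $u \ne 0$, while $u \ne \pm 1$ because the equations $v^2 \mp v - 1 = 0$ have no rational solutions. Thus $u \in \Q \setminus \{0, \pm 1\}$, and Lemma \ref{degree_8_specialization_lem} applies: $F(u, x)$ is irreducible of degree $8$ with $\Gal(F(u,x)) \cong (32,11)$. Since $F(u,x)$ is, up to a rational scalar, an irreducible factor of $\Phi_{4,c}(x)$, its splitting field embeds in that of $\Phi_{4,c}$, so $32 \mid |G_c|$. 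Combined with $|G_c| = |G_\P|$ dividing $|B_7| = 64$, this leaves only $|G_c| \in \{32, 64\}$.

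The main obstacle is eliminating the case $|G_c| = 32$. To attack this I would analyze the two quadratic factors $g_2(v,x), g_3(v,x)$: Morton's theorem guarantees that $\Phi_{4,c}$ has no rational root, so neither factor can acquire a linear factor upon specialization, and each contributes a genuine quadratic extension of $\Q$. Generically, the tower $L \supset L_8 \supset \Q(t)$ has degrees $2$ and $32$ (where $L, L_8$ are the splitting fields of $g$ and $g_1$), so the outer degree-$2$ step must be realized either by one of the two quadratic subfields or by their compositum. The case $|G_c| = 32$ thus corresponds to a specific specialization condition---essentially that a suitable element of $\Q[t]$ (a discriminant, modulo squares in $L_8$) becomes a square at $t = v$. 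This carves out an auxiliary hyperelliptic curve, whose rational points I would determine by the methods of \S\ref{subgroup_curves_section}. The expected outcome, given the statement, is that its unique relevant rational point corresponds to $c = -155/72$, so for all $v$ with $c \ne -155/72$ one has $|G_c| = 64$, hence $G_\P = B_7$; Proposition \ref{rational_function_galois_group_prop}(3) then delivers $G_c \equiv B_7$ and $\calF_c = \{8,2,2\}$.
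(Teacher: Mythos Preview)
Your outline is correct and matches the paper's approach closely: apply Proposition \ref{rational_function_galois_group_prop} with $\mu=\rho$, factor $g$ as $\{8,2,2\}$, use Lemma \ref{degree_8_specialization_lem} to get $32\mid|G_c|$, and then rule out $|G_c|=32$ by showing the quadratic field generated by the degree-2 factors is not contained in the degree-32 splitting field $L$ of $F(u,x)$.

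Two points where your sketch is vaguer than what is actually required. First, the paper computes explicitly that both quadratic factors $P(s,x)$ and $Q(s,x)$ split over the \emph{same} field $K=\Q\bigl(\sqrt{(s^4-1)(s^2-2s-1)}\bigr)$; your invocation of Morton's theorem correctly shows each factor stays irreducible (a nice shortcut over the paper's separate curve computation for this step), but you still need the explicit discriminant to carry out the comparison with $L$. Second, ``an auxiliary hyperelliptic curve'' is optimistic: since $\Gal(F(u,x))\cong(32,11)$ has exactly three index-2 subgroups, $L$ contains exactly three quadratic subfields $L_i=\Q(\sqrt{\delta_i})$, and one must check $K\ne L_i$ for each $i$. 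This yields \emph{three} hyperelliptic curves (of genera 4, 6, and 8), handled in Propositions \ref{genus4_chabauty}, \ref{genus6_descent}, and \ref{genus8_descent_chabauty}; the exceptional value $c=-155/72$ arises from the first of these, where $s\in\{3,-1/3\}$ are the only nontrivial rational points.
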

\begin{proof}
Recall that
\[\rho(z)=\frac{1+4z^3-z^6}{4z^2(z^2-1)}=\eta\left(\frac{z^2-1}{z}\right).\]

The fixed field of $B_7$ has the form $\Q(t)(\theta)$, where $\theta$ is a root of the polynomial $f(t,x)=x^6 + 4tx^4 - 4x^3 - 4tx^2 - 1$. Thus $\theta\in N$ and $\rho(\theta)=t$.

We now apply Proposition \ref{rational_function_galois_group_prop} with $\mu=\rho$ and $P=\Phi_4$. Note that \[f(t,x)=t(4x^2(x^2-1))-(1+4x^3-x^6)\]

is monic and irreducible and has $\theta$ as a root. Moreover, the discriminant of $f$ has no rational root. Since $c\in\Im(\rho)$, we may write $c=\rho(s)$ for some $s\in\Q\setminus\{0,\pm 1\}$. By part (1) of Proposition \ref{rational_function_galois_group_prop} there exists a prime $\P$ of $N$ dividing $\p_c$ such that $G_{\P}\subseteq B_7$.

Let $\nu(t)=(t^2-1)/t$, so that $\rho(t)=\eta(\nu(t))$, and let
\[g(t,x)=\left(4t^2(t^2-1)\right)^6\cdot\Phi_4(\rho(t),x).\]

From the factorization \eqref{8_4_factorization} it follows that $g=A\cdot B$, where
\[A(t,x)=t^{12}\cdot F(\nu(t),x)\;\;\text{and}\;\; B(t,x)=t^6\cdot J(\nu(t),x).\]

One can verify that $A,B\in\Q[t][x]$ and that $A$ is irreducible of degree 8. Factoring $B$ we find that $B=P\cdot Q$, where
\begin{align*}
P(t,x) &= (4t^4 - 4t^2)x^2 + (4t-4t^3)x - t^6 + 2t^5 + 2t^3 + 2t^2 - 1,\\
Q(t,x) &= (4t^4 - 4t^2)x^2 + (4t^5 - 4t^3)x + t^6 - 2t^4 + 2t^3 + 2t + 1,
\end{align*}

and $P$, $Q$ are irreducible. Note that $\calF(g)=\{8,2,2\}$ since $g=A\cdot P\cdot Q$. Computing the discriminant of $g$ we see that its only rational roots are $0,\pm 1$. Hence $\disc g(s,x)\ne 0$.

We claim that $G_{\P}=B_7$. Assuming this for the moment, part (3) of Proposition \ref{rational_function_galois_group_prop} implies that $G_c\equiv B_7$ and $\calF_c=\calF(g)=\{8,2,2\}$, which is what we have to prove. Thus it remains only to show that $G_{\P}=B_7$.

Since $G_{\P}$ is a subgroup of $B_7$ and $|B_7|=64$, it suffices to show that $|G_{\P}|=64$. We know by Lemma \ref{HIT_lem} that $G_{\P}$ is isomorphic to $G_c$, so we may show instead that the splitting field of $\Phi_{4,c}$ has degree 64 over $\Q$. Let $E$ denote this splitting field.

Let $v=\nu(s)=(s^2-1)/s$, so that $c=\rho(s)=\eta(v)$. From our work above we deduce that, up to a nonzero scalar,
\[\Phi_{4,c}(x)=F(v,x)\cdot P(s,x)\cdot Q(s,x).\]

Hence $E$ is the compositum of the splitting fields of $F(v,x)$ and $P(s,x)\cdot Q(s,x)$. Note that $P(s,x)$ and $Q(s,x)$ split over the same field, namely
\[K=\Q\left(\sqrt{(s^4-1)(s^2-2s-1)}\right).\]

This field is equal to $\Q$ if and only if there exists $y\in\Q$ such that
\[y^2=(s^4-1)(s^2-2s-1).\]

The hyperelliptic curve defined by this equation has a Jacobian of rank 0; computing the points on the curve we find that its only affine rational points are $(0,\pm 1)$ and $(\pm1,0)$. Since $s\not\in\{0,\pm1\}$, we conclude that $K$ is a quadratic field.

Let $L$ be the splitting field of $F(v,x)$. By Lemma \ref{degree_8_specialization_lem} we know that $[L:\Q]=32$. Since $E=LK$, we will have $[E:\Q]=64$ unless $K\subset L$. We will now rule out this possibility and thus complete the proof.

As seen in the proof of Lemma \ref{degree_8_specialization_lem}, the group $\Gal(F)$ has exactly three subgroups of index 2; by the lemma the same is true for $\Gal(F(v,x))$. Hence the field $L$ has exactly three subfields of degree 2 over $\Q$. Now, the quadratic extensions of $\Q(t)$ that lie inside the splitting field of $F$ are generated by the polynomials $p_1, p_2, p_3$ listed in the proof of Lemma \ref{degree_8_specialization_lem}. Specializing at $v$ we conclude that the polynomials $p_i(v,x)$ all have a root in $L$. Hence $L$ contains the fields $L_i=\Q(\sqrt d_i)$, where
\[d_1=v(v^3-16),\;d_2=v^4+4v^2-8v+4,\;\text{and}\;d_3=d_1d_2.\]

From our work in the proof of Lemma \ref{degree_8_specialization_lem} we deduce that the $L_i$ are quadratic fields, and all distinct. Hence these must be the three quadratic fields contained in $L$. We claim that none of these fields can be equal to $K$. 

Using the relation $v=(s^2-1)/s$ we see that $L_i=\Q(\sqrt{\delta_i})$, where
\[\delta_1=(s^2 - 1)(s^6 - 3s^4 - 16s^3 + 3s^2 - 1),\;\;\delta_2=s^8 - 8s^5 + 2s^4 + 8s^3 + 1,\]

and $\delta_3=\delta_1\delta_2$. Suppose that $K=L_i$ for some $i$. Then $\delta_i\cdot (s^4-1)(s^2-2s-1)$ is a rational square. It follows that there exists $y\in\Q$ such that one of the following holds:
\begin{align*}
y^2 &= (s^2 + 1)(s^2 - 2s - 1)(s^6 - 3s^4 - 16s^3 + 3s^2 - 1),\\
y^2 &= (s^4 - 1)(s^2 - 2s - 1)(s^8 - 8s^5 + 2s^4 + 8s^3 + 1),\\
y^2 &= (s^2 + 1)(s^2 - 2s - 1)(s^6 - 3s^4 - 16s^3 + 3s^2 - 1)(s^8 - 8s^5 + 2s^4 + 8s^3 + 1).
\end{align*}

By Propositions \ref{genus4_chabauty}, \ref{genus6_descent}, and \ref{genus8_descent_chabauty}, this implies that $s\in\{3,-1/3\}$. Then $c= \rho(s)=-155/72$, which contradicts the hypotheses. This proves that $K$ cannot be contained in $L$, as claimed.
\end{proof}

Summarizing our work up to this point we obtain a proof of Theorem \ref{phi4_factorization_galois_intro_thm}.

\begin{thm}\label{phi4_factorization_galois_thm}
Let $c\in\Q\setminus \left\{ -5, -\frac{5}{2}, -\frac{155}{72}, -2, -\frac{5}{4}, 0, \frac{19}{16}\right\}$. Then the following hold:
\begin{enumerate}
\item If $c\notin\Im(\lambda)$ and $c\notin\Im(\eta)$, then $G_c\equiv\calW$ and $\calF_c=\{12\}$.
\item If $c\in\Im(\lambda)$, then $G_c\equiv\calG$ and $\calF_c=\{12\}$.
\item Suppose that $c\in\Im(\eta)$. Then the following hold:
\begin{enumerate}
\item If $c\not\in\Im(\rho)$, then $G_c\equiv\calH$ and $\calF_c=\{8,4\}$.
\item If $c\in\Im(\rho)$, then $G_c\equiv\calI$ and $\calF_c=\{8,2,2\}$.
\end{enumerate}
\end{enumerate}
\end{thm}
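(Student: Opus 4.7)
The plan is to assemble the three image-case propositions together with Proposition \ref{Phi4_exceptional_set_prop} and the chain of equivalences \eqref{subgroup_equivalence}, splitting into the three cases that make up the statement. Essentially no new ideas are needed at this stage; every substantive calculation has already been carried out in \S\ref{subgroup_curves_section} and in Propositions \ref{lambda_image_prop}, \ref{eta_image_prop}, and \ref{rho_image_prop}.

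For case (1), suppose $c\notin\Im\lambda\cup\Im\eta$. Proposition \ref{Phi4_exceptional_set_prop} identifies the exceptional set as $\Im\lambda\cup\Im\eta\cup\{-5/2\}$, and since the theorem's exclusion list contains $-5/2$, the hypothesis forces $c\notin\calE(\Phi_4)$. By the definition of the exceptional set, $\Phi_{4,c}$ is then irreducible of degree $12$, so $\calF_c=\{12\}$, and $G_c\cong G$. To upgrade this abstract isomorphism to the action-preserving equivalence $G_c\equiv\calW$ demanded by the statement, I choose a prime $\P$ of $N$ above $\p_c$ and apply Lemma \ref{HIT_lem}(1), which gives $G_\P\equiv G_c$ as permutation groups on the respective root sets; since $G_c\cong G$ forces $|G_\P|=|G|$ and hence $G_\P=G$, we obtain $G_c\equiv G\equiv \calW$ via \eqref{subgroup_equivalence}.

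For case (2), the hypothesis $c\in\Im\lambda$ combined with the exclusion $c\neq 19/16$ (which is in the theorem's exclusion list) is exactly the hypothesis of Proposition \ref{lambda_image_prop}; that proposition yields $G_c\equiv M_1$ and $\calF_c=\{12\}$, and then $M_1\equiv\calG$ from \eqref{subgroup_equivalence} gives the conclusion. Case (3a) is analogous: the extra bad values $-5,-2,-5/4,0,19/16$ needed in Proposition \ref{eta_image_prop} are all in the theorem's exclusion list, so the proposition applies and delivers $G_c\equiv M_2\equiv\calH$ with $\calF_c=\{8,4\}$. Case (3b) similarly reduces to Proposition \ref{rho_image_prop} since $-155/72$ is again excluded, giving $G_c\equiv B_7\equiv\calI$ and $\calF_c=\{8,2,2\}$.

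There is no real obstacle to overcome — the only subtlety worth flagging explicitly is the passage from the Hilbert-irreducibility conclusion $G_c\cong G$ to the permutation equivalence $G_c\equiv\calW$ in case (1), which is why I invoke Lemma \ref{HIT_lem}(1) rather than just citing the definition of $\calE(\Phi_4)$. Everything else is bookkeeping: verify that the theorem's exclusion list contains every value excluded in the four contributing results, and translate the internal labels $M_1,M_2,B_7$ to the presentation names $\calG,\calH,\calI$ via \eqref{subgroup_equivalence}.
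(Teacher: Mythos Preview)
Your proposal is correct and follows essentially the same approach as the paper's proof: case (1) uses Proposition \ref{Phi4_exceptional_set_prop} together with Lemma \ref{HIT_lem} to upgrade $G_c\cong G$ to $G_c\equiv G\equiv\calW$, and the remaining cases are read off directly from Propositions \ref{lambda_image_prop}, \ref{eta_image_prop}, \ref{rho_image_prop} combined with \eqref{subgroup_equivalence}. Your version is in fact slightly more explicit than the paper's about why the permutation equivalence (not just the abstract isomorphism) holds in case (1), and about checking that the theorem's exclusion list subsumes the excluded values in each contributing proposition.
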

\begin{proof}
Suppose that $c\notin\Im(\lambda)$ and $c\notin\Im(\eta)$. Then Proposition \ref{Phi4_exceptional_set_prop} implies that $c$ is not in the exceptional set of $\Phi_4$; thus $G_c\cong G$ and $\calF_c=\calF(\Phi_4)=\{12\}$. It follows from Lemma \ref{HIT_lem} that $G_c\equiv G$ and hence $G_c\equiv\calW$ by \eqref{subgroup_equivalence}. This proves (1). The remaining statements follow immediately from \eqref{subgroup_equivalence} and Propositions \ref{lambda_image_prop}, \ref{eta_image_prop}, and \ref{rho_image_prop}.
\end{proof}

For the numbers $c$ that are excluded in Theorem \ref{phi4_factorization_galois_thm}, the Galois group $G_c$ can be computed using an algorithm of Fieker and Kl\"uners \cite{fieker-kluners} that is implemented in \textsc{Magma}. Carrying out these Galois group computations and factoring $\Phi_{4,c}$ to find the factorization types $\calF_c$, we obtain the data summarized in Table \ref{excluded_gc_table} below. For each number $c$ we give a pair $(o,n)$ such that $G_c$ is isomorphic to the group labeled $(o,n)$ in the \textsc{Small Groups} library \cite{small_groups}; thus $o$ represents the order of $G_c$.

\medskip
\begin{table}[h!]
\centering
\begin{tabular}{ |c|c|c|c|c|c|c|c|} 
 \hline
$c$ & -5 & -5/2 & -155/72 & -2 & -5/4 & 0 & 19/16 \\
 \hline
 $\calF_c$ & $\{8,4\}$ & $\{12\}$ & $\{8,2,2\}$ & $\{8,4\}$ & $\{8,2,2\}$ & $\{8,4\}$ & $\{8,4\}$ \\
 \hline
 $G_c$ & $(64,20)$ & $(24,5)$ & $(32,11)$ & $(32,3)$ & $(64,101)$ & $(8,2)$ & $(64,101)$\\
 \hline
\end{tabular}
\bigskip
\caption{Factorization types $\calF_c$ and isomorphism types of Galois groups $G_c$ for excluded values of $c$.} 
\label{excluded_gc_table}
\end{table}

\subsection{Density results}\label{phi4_density_section}

For every polynomial $F\in\Q[x]$, let $S_{F}$ denote the set of all prime numbers $p$ such that $F$ has a root in $\Q_p$. The Chebotarev Density Theorem implies that the Dirichlet density of $S_F$, which we denote by $\delta(S_F)$, exists and can be computed if the Galois group of $F$ is known. More precisely, we have the following lemma.

\begin{lem}\label{galois_density_lem}
Let $F\in \Q[x]$ be a separable polynomial of degree $n\ge 1$. Let $L$ be a splitting field for $F$, and set $G=\Gal(L/\Q)$. Let $\alpha_1,\ldots, \alpha_n$ be the roots of $F$ in $L$ and, for each index $i$, let $G_i$ denote the stabilizer of $\alpha_i$ under the action of $G$. Then the Dirichlet density of $S_F$ is given by
\begin{equation}\label{density_formula}
\delta(S_F)=\frac{\left|\bigcup_{i=1}^n G_i\right|}{|G|}.
\end{equation}
\end{lem}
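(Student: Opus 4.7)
The plan is to deduce the formula as a direct application of the Chebotarev Density Theorem together with the standard criterion that translates the existence of a root of $F$ in $\Q_p$ into the existence of a fixed point of the Frobenius element acting on the roots of $F$.

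First, I would restrict attention to primes $p$ that do not divide $\disc(F)$, which excludes only finitely many primes and hence does not affect the Dirichlet density. For such a prime, $p$ is unramified in $L$, and the Frobenius conjugacy class $\mathrm{Frob}_p\subseteq G$ is well defined. Using the Dedekind-Kummer theorem to factor $(p)$ in the ring $\Z[\alpha_1,\ldots,\alpha_n]$ (or, equivalently, using Hensel's lemma on the reduction of $F$ modulo $p$), one sees that $F$ has a root in $\Q_p$ if and only if $F$ has a factor of degree $1$ over $\Q_p$, if and only if any element of $\mathrm{Frob}_p$, viewed as a permutation of the set $\{\alpha_1,\ldots,\alpha_n\}$, fixes some $\alpha_i$. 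In terms of the stabilizers $G_i$, this says
\[p\in S_F\ \Longleftrightarrow\ \mathrm{Frob}_p\ \cap\ \bigcup_{i=1}^n G_i\ \neq\ \varnothing.\]

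Next, I would verify that the subset $T:=\bigcup_{i=1}^n G_i\subseteq G$ is a union of conjugacy classes of $G$. Since $G$ permutes the roots, the identity $gG_ig^{-1}=G_{g\cdot\alpha_i}$ shows that conjugation by any $g\in G$ permutes the stabilizers $\{G_1,\ldots,G_n\}$, so $T$ is stable under conjugation. Consequently, for all unramified $p$ the condition $\mathrm{Frob}_p\cap T\neq\varnothing$ is equivalent to $\mathrm{Frob}_p\subseteq T$.

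Finally, the Chebotarev Density Theorem, applied to the union of conjugacy classes $T$, gives
\[\delta(S_F)=\frac{|T|}{|G|}=\frac{\left|\bigcup_{i=1}^n G_i\right|}{|G|},\]
which is the desired formula \eqref{density_formula}. I do not expect any real obstacle here: the only point requiring care is the translation between ``$F$ has a root in $\Q_p$'' and ``Frobenius at $p$ has a fixed point on the roots of $F$'', which is a classical consequence of Hensel's lemma once ramified primes are discarded.
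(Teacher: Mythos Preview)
Your argument is correct and is the standard derivation of this density formula from Chebotarev's theorem. The paper itself does not supply a proof here; it simply cites Theorem~2.1 of \cite{krumm_lgp}, which records precisely the same result. So your write-up is not a different approach so much as a spelled-out version of the argument the paper defers to a reference, and every step you outline (discarding the finitely many ramified primes, the Hensel/Dedekind--Kummer translation ``$F$ has a root in $\Q_p$ $\Leftrightarrow$ Frobenius fixes some $\alpha_i$'', conjugation-invariance of $\bigcup_i G_i$, and the appeal to Chebotarev) is standard and goes through without difficulty.
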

\begin{proof}
This follows from Theorem 2.1 in \cite{krumm_lgp}.
\end{proof}

We deduce from Lemma \ref{galois_density_lem} that the value of $\delta(S_F)$ can be determined if a permutation representation of $G$ (induced by a labeling of the roots of $F$) is known. Indeed, if $H$ is a subgroup of the symmetric group $S_n$ such that $G\equiv H$, then $\delta(S_F)$ can be computed using \eqref{density_formula} by replacing $G$ with $H$ and $G_i$ with $H_i$, the stabilizer of $i$ in $H$. This fact reduces the problem of computing $\delta(S_F)$ to the problem of computing a permutation representation of $G$. The latter can be done using the algorithm of Fieker and Kl\"uners referenced above. Hence it is in principle possible to evaluate $\delta(S_F)$ for any polynomial $F\in\Q[x]$.

Returning now to dynatomic polynomials, Theorem \ref{phi4_factorization_galois_thm} provides a permutation representation of the Galois group of $\Phi_{4,c}$ for all but seven rational numbers $c$. Thus, we may use this theorem together with Lemma \ref{galois_density_lem} to compute the density of the set $S_{\Phi_{4,c}}$ for almost all $c$. To simplify notation we will henceforth write $S_c$ instead of $S_{\Phi_{4,c}}$.

\begin{thm}\label{phi4_density_thm}
Let $c\in\Q\setminus \left\{ -5, -\frac{5}{2}, -\frac{155}{72}, -2, -\frac{5}{4}, 0, \frac{19}{16}\right\}$.
\begin{enumerate}
\item If $c\notin\Im(\lambda)$ and $c\notin\Im(\eta)$, then $\delta(S_c)=85/384$.
\item If $c\in\Im(\lambda)$, then $\delta(S_c)=43/192$.
\item Suppose that $c\in\Im(\eta)$. Then the following hold:
\begin{enumerate}
\item If $c\not\in\Im(\rho)$, then $\delta(S_c)=53/128$.
\item If $c\in\Im(\rho)$, then $\delta(S_c)=39/64$.
\end{enumerate}
\end{enumerate}
\end{thm}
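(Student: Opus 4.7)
The plan is to apply Lemma \ref{galois_density_lem} to $F = \Phi_{4,c}$ in each of the four cases distinguished by Theorem \ref{phi4_factorization_galois_thm}. Since the only rational root of $\Delta(t)$ is $-5/4$ and this value is excluded in the hypothesis, $\Phi_{4,c}$ is separable for every $c$ under consideration, so Lemma \ref{galois_density_lem} applies. Theorem \ref{phi4_factorization_galois_thm} gives an explicit isomorphism of permutation groups $G_c \equiv K$ with $K \in \{\calW, \calG, \calH, \calI\}$, so
\[
\delta(S_c) \;=\; \frac{\left|\bigcup_{i=1}^{12} K_i\right|}{|K|},
\]
where $K_i$ denotes the stabilizer in $K$ of the $i$-th of the 12 roots. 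Equivalently, $\delta(S_c)$ equals the fraction of elements of $K$ that fix at least one of the 12 points.

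For case (1), $\calW = (\Z/4\Z) \wr S_3$ acts on three blocks of four points (as in Figure~\ref{roots_labeling_figure}). Writing an element as $((a_1,a_2,a_3);\sigma) \in (\Z/4\Z)^3 \rtimes S_3$, it fixes a point in block $i$ precisely when $\sigma(i)=i$ and $a_i=0$. Partitioning $S_3$ by cycle type: the identity contributes $4^3 - 3^3 = 37$ elements having a fixed point (those with at least one coordinate equal to $0$); each of the three transpositions fixes exactly one block and contributes $4^2 = 16$ such elements, for a total of $48$; and the two $3$-cycles fix no block and contribute $0$. Summing yields $37 + 48 = 85$ elements out of $|\calW|=384$, so $\delta(S_c) = 85/384$.

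The remaining cases (2), (3a), (3b) are handled by the same strategy applied to the subgroups $\calG,\calH,\calI$ of $S_{12}$ of orders $192, 128, 64$ respectively, acting on $12$ points with orbit structures matching $\calF_c$, namely $\{12\}$, $\{8,4\}$, $\{8,2,2\}$. Using the explicit generators given in Appendix \ref{galois_data_appendix}, one enumerates the elements of each group and counts those with at least one fixed point; the counts are $43$, $53$, $39$ respectively, producing the claimed densities $43/192$, $53/128$, $39/64$.

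The only real obstacle is the bookkeeping in cases (2), (3a), (3b), since the subgroups $\calG,\calH,\calI$ are not as transparently structured as $\calW$. However, each is a small finite permutation group given explicitly as a subgroup of $S_{12}$, so a direct enumeration in a computer algebra system (e.g.\ \textsc{Magma}) carries out the counting routinely. No further number-theoretic input is needed beyond Theorem \ref{phi4_factorization_galois_thm} and Lemma \ref{galois_density_lem}.
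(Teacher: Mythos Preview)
Your proof is correct and follows essentially the same approach as the paper: invoke Theorem~\ref{phi4_factorization_galois_thm} to identify $G_c$ with one of $\calW,\calG,\calH,\calI$ as a permutation group on $12$ letters, then apply Lemma~\ref{galois_density_lem} and count the elements with a fixed point. The paper simply states that it computes $\bigl|\bigcup_i \calW_i\bigr|=85$ and handles the remaining cases ``in a similar way,'' whereas you supply an explicit combinatorial count for $\calW$ using its wreath product structure; this extra detail is a nice addition but not a different method.
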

\begin{proof}
In case (1) we know that $G_c\equiv\calW$. Using the description of $\calW$ given in Appendix \ref{galois_data_appendix}, we compute the stabilizers $\calW_i$ for $1\le i\le 12$ and find that the set $\bigcup_{i=1}^{12}\calW_i$ has cardinality 85. By Lemma \ref{galois_density_lem} this implies that
\[\delta(S_c)=85/|\calW|=85/384.\]
The remaining statements are proved in a similar way.
\end{proof}

For the numbers $c$ that are excluded in Theorem \ref{phi4_density_thm}, the density of $S_c$ can be found by first computing the Galois group of $\Phi_{4,c}$ and then applying Lemma \ref{galois_density_lem}. Carrying out these computations we obtain the density values shown in Table \ref{density_table}.

\medskip
\begin{table}[h!]
\centering
\begin{tabular}{ |c|c|c|c|c|c|c|c|} 
 \hline
$c$ & -5 & -5/2 & -155/72 & -2 & -5/4 & 0 & 19/16 \\
 \hline
 $\delta(S_c)$ & 23/64 & 1/6 & 1/2 & 11/32 & 39/64 & 1/4 & 27/64 \\
 \hline
\end{tabular}
\bigskip
\caption{Density of $S_c$ for excluded values of $c$.}
\label{density_table}
\end{table}
We end this section with an application of the density results obtained above. By a well-known theorem of Morton \cite{morton_period4}, a quadratic polynomial over $\Q$ cannot have a rational point of period four. We can now prove Theorem \ref{period_percentage_thm}, thus showing that there is a strong local obstruction explaining Morton's result.

\begin{cor}\label{period4_local_obstruction}
Let $f\in\Q[x]$ be a quadratic polynomial. Then, for more than $39\%$ of all primes $p$, $f$ does not have a point of period four in $\Q_p$.
\end{cor}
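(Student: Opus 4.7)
The plan is to reduce the corollary directly to Theorem \ref{phi4_density_thm} and Table \ref{density_table} via two elementary observations, followed by a numerical sup.

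First, I would normalize $f$ to a member of the family $\phi_c(x)=x^2+c$. As noted in the introduction, every quadratic $f\in\Q[x]$ is conjugate to a unique $\phi_c$ by a linear polynomial $\mathit{l}\in\Q[x]$, and such a $\Q$-linear conjugation preserves $\Q_p$-points for every prime $p$: $\alpha\in\Q_p$ has period four under $f$ if and only if $\mathit{l}(\alpha)\in\Q_p$ has period four under $\phi_c$. So it is enough to prove the statement for $\phi_c$.

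Second, I would translate the condition ``$\phi_c$ has a period-four point in $\Q_p$'' into a root-existence statement for $\Phi_{4,c}$. From the Möbius definition one has
\[
(\phi_c^2(x)-x)\cdot \Phi_{4,c}(x) \;=\; \phi_c^4(x)-x,
\]
so any $\alpha\in\Q_p$ of exact period four (i.e.\ $\phi_c^4(\alpha)=\alpha$ but $\phi_c^2(\alpha)\ne\alpha$) is a root of $\Phi_{4,c}$. Hence if $p\notin S_c$, then $\phi_c$ has no period-four point in $\Q_p$, and the set of primes where $f$ has no period-four point in $\Q_p$ has Dirichlet density at least $1-\delta(S_c)$.

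Third, I would bound $\delta(S_c)$ uniformly in $c$. By Theorem \ref{phi4_density_thm}, if $c$ is not one of the seven exceptional values, then $\delta(S_c)\in\{85/384,\,43/192,\,53/128,\,39/64\}$, whose maximum is $39/64$. Table \ref{density_table} gives $\delta(S_c)\in\{23/64,\,1/6,\,1/2,\,11/32,\,39/64,\,1/4,\,27/64\}$ for the excluded $c$, again with maximum $39/64$ (attained at $c=-5/4$). Therefore, for every $c\in\Q$,
\[
1-\delta(S_c) \;\geq\; 1-\tfrac{39}{64} \;=\; \tfrac{25}{64} \;=\; 0.390625 \;>\; 0.39,
\]
which gives the corollary.

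There is no real obstacle remaining: all the substantive work — the Galois-group structure of $\Phi_{4,c}$ as a permutation group on roots, and the orbit-union counts via Lemma \ref{galois_density_lem} — has been absorbed into the preceding theorem and table. The corollary is a tight numerical check, and the only point worth flagging is that the worst stratum (namely $c\in\Im(\rho)$, and likewise $c=-5/4$) just barely clears the $39\%$ threshold, with only about $0.0006$ of slack; the $39\%$ figure in the statement is therefore essentially sharp among the thresholds of the form $k/100$.
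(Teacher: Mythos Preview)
Your proposal is correct and follows essentially the same route as the paper: reduce to $\phi_c$ by linear conjugation, observe that a period-four point in $\Q_p$ forces a $\Q_p$-root of $\Phi_{4,c}$, and then take the maximum of the densities from Theorem~\ref{phi4_density_thm} and Table~\ref{density_table} to get $1-\delta(S_c)\ge 25/64>0.39$. The only cosmetic difference is that the paper phrases the first step via the equivalence of factorizations of $\Phi_{4,f}$ and $\Phi_{4,c}$ (citing \cite[Cor.~2.11]{krumm_lgp}), whereas you argue directly with the conjugacy; both are fine.
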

\begin{proof}
Let $c$ be the unique rational number such that $f$ is linearly conjugate to $\phi_c(x)=x^2+c$. The polynomials $\Phi_{4,f}$ and $\Phi_{4,c}$ factor in the same way over every extension of $\Q$ (see \cite[Cor. 2.11]{krumm_lgp}); in particular $\Phi_{4,f}$ has a root in a $p$-adic field $\Q_p$ if and only if $\Phi_{4,c}$ has a root in $\Q_p$. From Theorem \ref{phi4_density_thm} and Table \ref{density_table} we deduce that
\[1-\delta(S_c)\ge1-\frac{39}{64}=\frac{25}{64}>0.39.\]

Thus, more than $39\%$ of all primes are in the complement of $S_c$. Now, if $p\notin S_c$, then $\Phi_{4,c}$ does not have a root in $\Q_p$, so $\Phi_{4,f}$ does not have a root in $\Q_p$, and therefore $f$ does not have a point of period four in $\Q_p$.
\end{proof}

\section{Rational points on hyperelliptic curves}\label{rational_points_section}

In this section we describe the methods used to determine the rational points on the various hyperelliptic curves that arise in the proof of Theorem \ref{phi4_factorization_galois_thm}. In addition we provide detailed arguments for five of these curves.

\subsection{Preliminaries} Let $C$ be a hyperelliptic curve over $\Q$ of genus $g\ge 2$. By Faltings's Theorem the set $C(\Q)$ is finite. The question we are concerned with here is how to provably determine all the rational points on $C$. The techniques used for this differ depending on whether $C(\Q)$ is empty or not.

Suppose first that $C(\Q)=\emptyset$ and we need to prove this. An initial step is to test whether $C$ has a point over every completion of $\Q$. This can be done algorithmically, and the necessary algorithms have been implemented; see \cite[\S 5]{bruin_local_solvability}, for instance. The \textsc{Magma} functions \texttt{HasPoint} and \texttt{HasPointsEverywhereLocally} can be used to determine whether $C$ has points over a given $p$-adic field or over all $p$-adic fields. These methods are used in the proofs of Propositions \ref{genus4_curve}, \ref{genus6_descent}, and \ref{genus8_descent_chabauty} below.

If we find that $C$ fails to have a point over $\Q_p$ for some $p$, then $C(\Q)=\emptyset$. However, it may occur that $C$ has a point in every $p$-adic field and yet has no rational point. In that case we can apply the two-cover descent \cite{bruin-stoll_2cover_descent} or Mordell-Weil sieve \cite{bruin-stoll_mw_sieve} techniques to try to show that $C(\Q)$ is empty.  An implementation of the former method is available via the \textsc{Magma} function \texttt{TwoCoverDescent}. The algorithm determines a collection of covers of $C$ such that any rational point on $C$ lifts to one of the covers. In particular, if the algorithm has empty output, then $C(\Q)$ must be empty. We have applied this method in the proofs of Propositions \ref{genus4_curve} and \ref{genus8_descent_chabauty}.

Suppose now that $C$ does have a rational point, and we need to determine all rational points. It is useful for this purpose to know the structure of the Mordell-Weil group $J(\Q)$, where $J$ is the Jacobian variety of $C$. In particular, the rank $r$ of this group should be determined if possible. An upper bound for $r$ can be obtained by using a descent argument as in \cite{stoll_descent}, and a lower bound by searching for independent rational points on $J$. If $r=0$, then it is a finite calculation to determine all rational points on $C$. If $r>0$ the problem becomes much harder, though still possible to handle in certain cases. Most importantly for our purposes, if $r<g$, then one may be able to apply Chabauty techniques (described below) to obtain an upper bound for the number of rational points on $C$. If this upper bound matches the number of known points, then $C(\Q)$ has been determined. If the upper bound obtained is larger than the known number of rational points, then a Mordell-Weil sieve argument might improve the bound.

\textsc{Magma} includes several functions that can be used to carry out the above calculations. The \texttt{RankBound} function computes an upper bound for $r$, and if $r$ is known to be 0, then the \texttt{Chabauty0} function determines all rational points on $C$. These tools are used in the proofs of Lemmas \ref{maximal_subgroup_curves_lem} and \ref{A5_curve_lem}, and Propositions \ref{rho_image_prop} and \ref{genus6_descent}. If $r=1$ and $g=2$, then the \texttt{Chabauty} function uses a combination of the Chabauty-Coleman method with a Mordell-Weil sieve to compute the set $C(\Q)$. This method is used in the proofs of Lemmas \ref{B4_curve_lem} and \ref{lambda_eta_images_lem}, and Proposition \ref{genus8_descent_chabauty}.

It remains to discuss the Chabauty-Coleman techniques used in the proofs of Propositions \ref{genus3_chabauty} and \ref{genus4_chabauty}. We will now give a brief overview of the method; further details can be found in \cite{mccallum-poonen} and \cite{wetherell_thesis}.

\subsection{The method of Chabauty and Coleman}\label{chabauty_method_section} 

Let $C$ be a smooth, projective, geometrically integral curve over $\Q$ of genus $g\ge 2$, and suppose that $C$ has a rational point.  The goal of the method is to find an upper bound for $\#C(\Q)$. We begin by considering the reduction of $C$ modulo a prime.

Let $p$ be a prime of good reduction for $C$, and let $\calC$ be the minimal proper regular model\footnote{See \cite[\S10.1.1]{liu} or \cite[Chap. IV, \S4]{silverman_topics} for background on models of curves.} of $C$ over $\Z_p$. We denote the set of $\F_p$-points on the special fiber of $\calC$ by $C(\F_p)$. The model $\calC$ gives rise to a surjective reduction map\footnote{See  \cite[\S10.1.3]{liu} for the definition and properties of the reduction map.} $C(\Q_p)\onto C(\F_p)$ which we denote by $Q\mapsto\tilde Q$. The preimage of a point under this map will be called a \textit{residue disk}. Clearly the residue disks form a finite partition of $C(\Q_p)$, so in order to bound $\#C(\Q)$ it suffices to bound the number of rational points within each residue disk. To accomplish the latter we use integration of global differentials on $C$.

Let $J$ be the Jacobian variety of $C$. Recall that the $\Q_p$-vector space of global 1-forms on $J$ has dimension $g$, and that there is a canonical isomorphism
\begin{equation}\label{jacobian_1forms_isomorphism}
H^0(J,\Omega_{J/\Q_p}^1)\cong H^0(C,\Omega_{C/\Q_p}^1).
\end{equation}

To every global 1-form $\omega$ on $J$ there corresponds a unique analytic homomorphism $\lambda_{\omega}: J(\Q_p)\to\Q_p$ such that $d(\lambda_{\omega})=\omega$; see \cite[Lemma 3.2]{wetherell_thesis}. Using this map we define, for points $a,b\in J(\Q_p)$,
\[\int_a^b\omega=\lambda_{\omega}(b)-\lambda_{\omega}(a).\]

The integral thus defined satisfies the usual properties one might expect. In particular, it is linear in the integrand and changes sign if the limits of integration are interchanged. See Proposition 2.4 in \cite{coleman_integrals} for these and other properties.

A central object in the method of Chabauty and Coleman is the \textit{annihilator} of $J(\Q)$, which is defined to be the $\Q_p$-vector space
\[\Ann J(\Q)=\{\omega\in H^0(J,\Omega^1_{J/\Q_p}):\lambda_{\omega}(a)=0\;\;\text{for all}\;\;a\in J(\Q)\}.\]

An important step in the method is to find a nontrivial 1-form in this space. The existence of such a form is guaranteed if $r<g$, where $r$ is the rank of the group $J(\Q)$. Indeed, this can be deduced from the following lemma, which we include for later reference. The proof is a simple exercise in linear algebra.

\begin{lem}\label{annihilator_lem}
Let $\omega_0,\ldots,\omega_{g-1}$ form a basis for $H^0(J,\Omega^1_{J/\Q_p})$. Suppose that $D_1,\ldots, D_r\in J(\Q)$ generate a subgroup of finite index in $J(\Q)$. Let $M$ be the $r\times g$ matrix with entries $M_{ij}=\lambda_{\omega_j}(D_i)$. Then the linear isomorphism $\Q_p^g\to H^0(J,\Omega^1_{J/\Q_p})$ given by $(a_0,\ldots, a_{g-1})\mapsto a_0\omega_0+\cdots+a_{g-1}\omega_{g-1}$ restricts to an isomorphism $\ker M\to\Ann J(\Q)$.
\end{lem}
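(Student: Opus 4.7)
The plan is to chain together two elementary observations: the correspondence $\omega \mapsto \lambda_\omega$ is $\Q_p$-linear, and vanishing of $\lambda_\omega$ on the $D_i$ is equivalent to vanishing on all of $J(\Q)$. First I would record the linearity. Given $\omega,\omega' \in H^0(J,\Omega^1_{J/\Q_p})$ and $a,b\in\Q_p$, the map $a\lambda_\omega + b\lambda_{\omega'}$ is still an analytic homomorphism $J(\Q_p)\to\Q_p$ whose differential equals $a\omega+b\omega'$, so by the uniqueness clause in the construction of $\lambda_{(-)}$ it coincides with $\lambda_{a\omega+b\omega'}$. In particular, if $\omega = \sum_{j=0}^{g-1} a_j\omega_j$ corresponds to the vector $\mathbf{a} = (a_0,\ldots,a_{g-1})^T$, then for each $i$,
\[
\lambda_\omega(D_i) \;=\; \sum_{j=0}^{g-1} a_j\, \lambda_{\omega_j}(D_i) \;=\; \sum_{j=0}^{g-1} M_{ij}\, a_j \;=\; (M\mathbf{a})_i,
\]
so $\mathbf{a}\in\ker M$ if and only if $\lambda_\omega$ vanishes on each of $D_1,\ldots,D_r$.

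Next I would pass from the finite set $\{D_i\}$ to all of $J(\Q)$. Since $\lambda_\omega$ is a group homomorphism, vanishing on the $D_i$ is equivalent to vanishing on the subgroup $\Lambda$ they generate. Let $n = [J(\Q):\Lambda]<\infty$; then for any $a\in J(\Q)$ we have $na\in\Lambda$, so $n\cdot\lambda_\omega(a) = \lambda_\omega(na) = 0$, and because $\Q_p$ is torsion-free this forces $\lambda_\omega(a)=0$. Hence $\lambda_\omega$ vanishes on $\Lambda$ if and only if it vanishes on $J(\Q)$, i.e.\ if and only if $\omega\in\Ann J(\Q)$.

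Combining the two steps, $\mathbf{a}\in\ker M$ if and only if $\omega\in\Ann J(\Q)$, so the linear isomorphism $\Q_p^g \to H^0(J,\Omega^1_{J/\Q_p})$ sending $\mathbf{a}$ to $\sum_j a_j\omega_j$ restricts to a $\Q_p$-linear bijection $\ker M \to \Ann J(\Q)$. There is no genuine obstacle; the only substantive ingredient is the torsion-free/finite-index argument that transports vanishing from the generators $D_i$ to the whole Mordell--Weil group, and the rest is just unwinding the definition of the matrix $M$.
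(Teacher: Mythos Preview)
Your proof is correct; the paper itself omits the argument entirely, stating only that ``the proof is a simple exercise in linear algebra.'' You have supplied exactly the expected details---linearity of $\omega\mapsto\lambda_\omega$ plus the finite-index/torsion-free trick---so your write-up is fully consistent with (and more complete than) what appears in the paper.
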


Now let $\eta$ be a global 1-form on $C$, and let $\omega$ be the 1-form on $J$ corresponding to $\eta$ under the isomorphism \eqref{jacobian_1forms_isomorphism}. For any points $P,Q\in C(\Q_p)$ we set
\[\int_P^Q\eta=\lambda_{\omega}([Q-P])=\int_0^{[Q-P]}\omega.\]

It follows from the definitions that if $\omega\in\Ann J(\Q)$, then
\begin{equation}\label{annihilating_integral_ppty}
\int_P^Q\eta=0\hspace{4mm}\text{for all}\;\;P,Q\in C(\Q).
\end{equation}

This property of annihilating differentials will allow us to obtain an upper bound for $\#C(\Q)$. Recall that the general strategy is to find an upper bound for the number of rational points in every residue disk. We can now describe precisely how to do this.

Suppose that $r<g$, so that $\Ann J(\Q)$ is nontrivial, and let $\calR\subseteq C(\Q_p)$ be a residue disk. In order to find an upper bound for the cardinality of the set $C(\Q)\cap\calR$ we begin by choosing a basepoint $P\in C(\Q)$ and a global differential $\eta$ on $C$ corresponding to a 1-form in $\Ann J(\Q)$. We impose no restriction on $P$, but for reasons mentioned below we require that
\begin{equation}\label{differential_reduction_requirement}
\text{$\eta$ reduces to a nonzero global differential on $C/\F_p$.}
\end{equation}

Let $\lambda:\calR\to\Q_p$ be the function defined by the formula
\[\lambda(Q)=\int_P^Q\eta.\]

By \eqref{annihilating_integral_ppty} this function vanishes on $C(\Q)\cap\calR$. It therefore suffices to find a bound for the number of zeros of $\lambda$. To do this we choose a point $Q\in\calR$ and a local parameter $t$ at $Q$, i.e.,  a generator of the maximal ideal of the local ring $\calO_{C,Q}$. We require that
\begin{equation}\label{local_parameter_requirement}
\text{$t$ reduces to a local parameter at the point $\tilde Q\in C(\F_p)$.}
\end{equation}

This assumption ensures that $t$ is regular on $\calR$ and induces a bijection $t:\calR\to p\Z_p$. We may thus regard $\lambda$ as a function $p\Z_p\to\Q_p$. The next step in the method is to find a power series representation for this function.

By embedding the local ring $\calO_{C,Q}$ in its completion we can represent every rational function in $\calO_{C,Q}$ as a formal power series in $\Q_p\llbracket t\rrbracket$, and every rational function in $\Q_p(C)$ as a formal Laurent series in $t$ with $\Q_p$-coefficients\footnote{More precisely, there is a $\Q_p$-algebra isomorphism $\Q_p\llbracket z\rrbracket\longrightarrow\widehat\calO_{C,Q}$ mapping $z$ to $t$. This induces an isomorphism of $\Q_p(\!(z)\!)$ with the field of fractions of $\widehat\calO_{C,Q}$, into which $\Q_p(C)$ naturally embeds. See \cite[p. 128]{bump} for proofs of these statements.}.

We may write $\eta=wdt$ for a unique rational function $w\in\calO_{C,Q}$. The condition \eqref{differential_reduction_requirement} implies that the power series representation of $w$ has $\Z_p$-coefficients, and that these are not all zero modulo $p$. Thus
\[w(t)=\sum_{i=0}^{\infty}c_it^i\]

with $c_i\in\Z_p$ for all $i$, and $\ord_p(c_i)=0$ for some $i$. Formal antidifferentiation now yields a power series representation for $\lambda$:
\begin{equation}\label{lambda_power_series}
\lambda(t)=b_0+\sum_{i=0}^{\infty}\frac{c_i}{i+1}t^{i+1},
\end{equation}
 
 where the constant term is necessarily given by $b_0=\int_P^Q\eta$. Recall that our goal is to bound the number of zeros of $\lambda$ in $p\Z_p$. It is helpful for this purpose to define an auxiliary function $F:\Z_p\to\Q_p$ by $F(x)=\lambda(px)$. From \eqref{lambda_power_series} we obtain the power series representation

\[F(t)=\sum_{i=0}^{\infty}b_it^i,\;\; \text{where}\;\; b_i=\frac{c_{i-1}}{i}\cdot p^i\;\;\text{for}\;\; i\ge 1.\]

We can apply Stra{\ss}mann's Theorem \cite[Thm. 4.4.6]{gouvea} to find an upper bound for the number of zeros of $F$ (and hence the number of zeros of $\lambda$). Defining an index $N$ uniquely by the conditions
\[\ord_p(b_N)=\min_{i\ge 0}\ord_p(b_i)\;\;\;\text{and}\;\;\;\ord_p(b_N)<\ord_p(b_i)\;\;\text{for}\;\;i>N,\]

the theorem guarantees that $F$ has at most $N$ zeros in $\Z_p$, and therefore
\[\#\left(C(\Q)\cap\calR\right)\le N.\]

This accomplishes our goal of finding a bound for the number of rational points in $\calR$. By applying this process to every residue disk we then obtain an upper bound for $\#C(\Q)$.

\begin{rem}\label{strassmann_bound_rem} In order to determine the value of $N$ one only needs to know finitely many of the coefficients $b_i$. Indeed, it is a simple exercise to show that if $n$ is large enough, then
\begin{equation}\label{power_series_precision}
n-\log_p(n)\ge\min\{\ord_p(b_i): 0\le i\le n\}.
\end{equation}

With $n$ as above, let $v$ be the quantity on the right-hand side of \eqref{power_series_precision}. Note that for all $i\ge 1$ we have
\[\ord_p(b_i)\ge i-\ord_p(i)\ge i-\log_p(i).\]

It follows from the choice of $n$ that $\ord_p(b_i)>v$ for all $i>n$. This implies that $v=\min_{i\ge 0}\ord_p(b_i)$. Moreover, since $\ord_p(b_N)=v$, we must have $N\le n$. Thus $N$ can be determined if the coefficients $b_0,\ldots, b_n$ are known.
\end{rem}

\subsection{Explicit Chabauty for hyperelliptic curves}\label{hyperelliptic_chabauty_section}
 
Though the technique described above can in theory be applied to a fairly general curve $C$, in practice there are several issues that make it difficult to carry out the necessary computations. For hyperelliptic curves, however, there are tools available which in many cases make it possible to explicitly compute all of the required data.

We assume henceforth that $C$ is a hyperelliptic curve given by an equation $y^2=f(x)$, where $f(x)$ has odd degree and the genus $g$ is at least 2. Let $p$ be a prime not dividing the discriminant of $f(x)$, so that $C$ has good reduction at $p$. Work of Balakrishnan-Bradshaw-Kedlaya \cite{balakrishnan-bradshaw-kedlaya} provides an algorithm for computing the integral $\int_P^Q\eta$ for any points $P,Q\in C(\Q_p)$ and any global differential $\eta$ on $C$; an implementation of the algorithm is included in \textsc{Sage} \cite{sage}. By using this tool it is possible to automate a significant portion of the calculations required for a Chabauty-Coleman argument.

To begin we assume that the rank $r$ of $J(\Q)$ has been determined, and that $r<g$. From our discussion in the previous section we deduce the following steps for bounding the number of rational points on $C$.

\begin{enumerate}
\item\label{annihilator_step} Determine a basis for the annihilator of $J(\Q)$.
\item Choose a basepoint $P\in C(\Q)$.
\item For every point $\tilde Q\in C(\F_p)$:
\begin{enumerate}
\item\label{lifting_step} Lift $\tilde Q$ to a point $Q\in C(\Q_p)$.
\item\label{parameter_step} Choose a local parameter $t$ at $Q$ satisfying \eqref{local_parameter_requirement}.
\item\label{differential_step} Choose a global differential $\eta\in\Ann J(\Q)$ satisfying \eqref{differential_reduction_requirement}.
\item\label{series_step} Find the power series $w(t)\in\Z_p\llbracket t\rrbracket$ such that $\eta=w(t)dt$.
\item Construct the power series
\[F(t)=\int_P^Q\eta\;+\;I(pt),\]
where $I(t)$ is the formal antiderivative of $w(t)$.
\item\label{strassmann_step} Compute the Stra{\ss}mann bound for $F(t)$.
\end{enumerate}
\end{enumerate}

By adding all the bounds computed in step \ref{strassmann_step} we obtain an upper bound for $\#C(\Q)$. We will now discuss how to explicitly carry out each of the above steps.

\smallskip
\noindent\underline{\textit{Step \ref{annihilator_step}.}} Since this will always be the case in the examples worked out in this paper, let us assume that we know $r$ pairs of points $P_i, Q_i\in C(\Q)$ such that the divisor classes $[P_i-Q_i]$ generate a subgroup of finite index in $J(\Q)$.

A basis for the global differentials on $C$ is given by
\[\omega_0=(1/2y)dx\;\;\text{and}\;\;\omega_i=x^i\omega_0\;\;\text{for}\;\;i=1,\ldots, g-1.\]

Let $M$ be the $r\times g$ matrix with entries
\[M_{ij}=\int_{P_i}^{Q_i}\omega_j.\]

Computing the entries of $M$ and a basis for $\ker M$, Lemma \ref{annihilator_lem} can be applied to obtain a basis for $\Ann J(\Q)$. If necessary, we then scale the basis differentials to ensure that they reduce to nonzero global differentials on $C/\F_p$. Let $\eta_1,\ldots, \eta_s$ be a basis for $\Ann J(\Q)$ constructed in this way. Note that any one of the $\eta_i$'s can be chosen as the differential $\eta$ in step \ref{differential_step}. More generally, $\eta$ can be any linear combination of the $\eta_i$'s satisfying \eqref{differential_reduction_requirement}.

\smallskip
\noindent\underline{\textit{Step \ref{lifting_step}.}} If $\tilde Q$ is the point at infinity, then we lift to $Q=\infty$. Otherwise we apply Hensel's Lemma. Let $\tilde Q$ have coordinates $(\tilde a,\tilde b)$. If $\tilde b\ne 0$ we lift $\tilde b$ to a square root of $f(\tilde a)$ in $\Z_p$. If $\tilde b=0$ we lift $\tilde a$ to a root of $f(x)$ in $\Z_p$.

\smallskip
\noindent\underline{\textit{Step \ref{parameter_step}.}} Assuming the lift $Q$ of $\tilde Q$ was constructed as above, we choose the local parameter $t$ at $Q$ as follows:
\[
t=
\begin{cases}
x-a & \text{if } Q=(a,b)\text{ with }b\ne 0,\\
y & \text{if } Q=(a,b)\text{ with }b=0, \\
y/x^{g+1} & \text{if } Q=\infty.
\end{cases}
\]

\smallskip
\noindent\underline{\textit{Step \ref{series_step}.}} Recall that $\eta$ was constructed as a linear combination of the differentials $\eta_i$ from Step \ref{annihilator_step}, say $\eta=c_1\eta_1+\cdots+c_s\eta_s$. Expressing each $\eta_i$ as a linear combination of $\omega_0,\ldots,\omega_{g-1}$ we can write $\eta_i=p_i(x)\omega_0$ for some polynomial $p_i(x)$. Thus, letting $P(x)=c_1p_1(x)+\cdots+c_sp_s(x)$ we have
\begin{equation}\label{differential_polynomial_eqn}
\eta=P(x)\omega_0=\frac{P(x)}{2y}dx.
\end{equation}

In order to express $\eta$ in the form $\eta=w(t)dt$ it suffices to find the Laurent series for $x$ and $y$ in powers of $t$. Indeed, writing $x=x(t)$ and $y=y(t)$ we can find the Laurent series, say $\zeta(t)$, for the rational function $P(x)/2y$. Then \eqref{differential_polynomial_eqn} yields $\eta=\zeta(t)x'(t)dt$, where $x'(t)$ is the formal derivative of $x(t)$.

It remains to explain how the Laurent series for $x$ and $y$ are computed. Suppose first that $Q=(a,b)$ is an affine point with $b\ne 0$. The local parameter at $Q$ is $t=x-a$, so the power series for $x$ is $x(t)=a+t$. Now, Hensel's Lemma implies that there is a unique power series $y(t)\in\Q_p\llbracket t\rrbracket$ such that $y(t)^2=f(t+a)$ and $y(0)=b$; this is the series for $y$.

Similarly, if $Q$ is of the form $Q=(a,0)$, then the local parameter is $t=y$, which gives the power series for $y$. The series for $x$ is the unique power series $x(t)\in\Q_p\llbracket t\rrbracket$ such that $t^2=f(x(t))$ and $x(0)=a$.

Finally, if $Q$ is the point at infinity, then the local parameter is $t=y/x^{g+1}$. Let $h(x)$ be the polynomial $h(x)=x^{2g+2}\cdot f(1/x)$, so that the complementary affine patch of $C$ is given by $v^2=h(u)$. Let $u(s)$ and $v(s)$ be the power series for $u$ and $v$ at the point $(0,0)$. The Laurent series for $x$ and $y$ are then $x(t)=1/u(t)$ and $y(t)=t\cdot x(t)^{g+1}$. 

\smallskip
\noindent\underline{\textit{Step \ref{strassmann_step}.}} In view of Remark \ref{strassmann_bound_rem}, once enough of the terms of the power series $F(t)$ have been determined, it is a straightforward calculation to find the Stra{\ss}mann bound.

We proceed now to apply all of the methods discussed above to determine the rational points on five curves.

\subsection{Rational points on five curves}\label{five_curves_section}
 
\begin{prop}\label{genus4_curve} Let $C$ be the hyperelliptic curve over $\Q$ defined by the equation $y^2=f(x)\cdot g(x)$, where
\begin{align*}
f(x)&=(x^2 + 1)(3x^2 + 2x - 3),\\
g(x)&=43x^6 + 48x^5 - 81x^4 - 80x^3 + 81x^2 + 48x - 43.
\end{align*} 
The set of affine rational points on $C$ is $\{(\pm 1,\pm 8)\}$.
\end{prop}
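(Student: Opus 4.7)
The plan is to exploit the factorization $y^2 = f(x)\,g(x)$ with $\gcd(f,g)=1$ to perform a two-cover descent of the sort implemented by the \texttt{TwoCoverDescent} function in \textsc{Magma}. Any affine rational point $(x_0,y_0)$ with $y_0 \neq 0$ determines a squarefree integer $d$ such that $f(x_0) = d\,u_0^2$ and $g(x_0) = d\,v_0^2$ for some $u_0,v_0\in\Q^{\ast}$, and conversely such a datum lifts to at most two points of $C(\Q)$. The goal is therefore to enumerate the finite list of square classes of $d$ compatible with local constraints, and for each surviving $d$ to analyze the two auxiliary curves
\[ E_d : u^2 = d\,f(x) \quad (\text{genus } 1) \qquad\text{and}\qquad H_d : v^2 = d\,g(x) \quad (\text{genus } 2). \]

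First, I would bound the square classes of $d$ by local obstructions. At each prime $p$ dividing $\Res(f,g)\cdot\disc f\cdot\disc g$, comparing parities of the $p$-adic valuations of $f(x_0)$ and $g(x_0)$ restricts $d$ modulo $(\Q_p^{\ast})^2$; the archimedean and $2$-adic contributions are handled similarly. The known points $(\pm 1,\pm 8)$ on $C$ show that $d=1$ (from $x_0=1$, where $f=4$, $g=16$) and $d=-1$ (from $x_0=-1$, where $f=-4$, $g=-16$) both arise, so these square classes must survive; the claim to be established is that no other $d$ does.

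Next, for each surviving $d$, I would put $E_d$ in Weierstrass form and compute its Mordell--Weil group. In the rank-$0$ case, a finite list of candidate $x$-coordinates is produced, and one tests directly whether $d\,g(x)\in(\Q^{\ast})^2$. In cases where $E_d$ has positive rank, one instead turns to $H_d$: compute the rank of its Jacobian and apply Chabauty--Coleman whenever the rank is at most $1$, using the methods of \S\ref{hyperelliptic_chabauty_section}. The intersection of the $x$-coordinates coming from $E_d(\Q)$ with those coming from $H_d(\Q)$ then determines the rational $x$-values on $C$ corresponding to $d$.

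The main obstacle is the possibility that some $E_d$ has positive rank \emph{and} the corresponding $H_d$ has Jacobian rank $\geq 2$, in which case neither direct enumeration nor plain Chabauty suffices; then one would fall back on elliptic Chabauty over the quadratic extension $\Q(i)$ (exploiting the factor $x^2+1$ of $f$) or a Mordell--Weil sieve. A useful structural feature easing the bookkeeping is the involution $\iota:(x,y)\mapsto(-1/x,\,y/x^5)$, visible from the identities $x^4 f(-1/x)=-f(x)$ and $x^6 g(-1/x)=-g(x)$; it interchanges the points $(1,\pm 8)$ with $(-1,\pm 8)$ and makes the $d=1$ and $d=-1$ analyses structurally parallel, cutting the casework roughly in half. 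The descent should close with only $d\in\{1,-1\}$ surviving and each yielding only $x=\pm 1$, thereby recovering the four claimed affine rational points $(\pm 1,\pm 8)$.
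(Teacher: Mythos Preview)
Your proposal is correct and follows essentially the same approach as the paper: descent via the factorization $y^2=f(x)g(x)$ to the auxiliary curves $du^2=f(x)$ (genus~1) and $dv^2=g(x)$ (genus~2), with local arguments to eliminate most $d$ and Chabauty on the genus-2 twist for the surviving classes $d=\pm 1$. Your observation of the involution $(x,y)\mapsto(-1/x,\,y/x^5)$ is a nice addition that the paper does not make explicit, though the paper does note that the $d=-1$ case follows by a similar argument to $d=1$; the one wrinkle you should anticipate is that for $d=\pm 5$ neither local obstructions nor rank-0/Chabauty immediately dispose of the genus-2 twist, and the paper handles these via a 2-cover descent showing the fake 2-Selmer set is empty.
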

\begin{proof}
The polynomials $f$ and $g$ have no rational root. Hence, if $(x_0,y_0)$ is an affine rational point on $C$, then $f(x_0)$ and $g(x_0)$ are nonzero, and since their product is a square, they must have the same squarefree part, say $d$. Thus, we have $du_0^2=f(x_0)$ and $dv_0^2=g(x_0)$ for some $u_0,v_0\in\Q$. 

For every squarefree integer $d$, let $C_d\subset\A^3=\Spec\Q[x,u,v]$ be the curve
\[
C_d:\;\begin{cases}
\;du^2=f(x),\\
\;dv^2=g(x).
\end{cases}
\]
Note that there is a map $C_d\to C,\;\; (x,u,v)\mapsto (x, duv)$.
The above argument shows that every affine rational point on $C$ lifts to $C_d(\Q)$ for some $d$.

We can narrow down the possibilities for $d$. Suppose that $(x_0,y_0)\in C(\Q)$ lifts to a point $(x_0,u_0,v_0)\in C_d(\Q)$. If $p$ is a prime dividing $d$, it is easy to see that $x_0, u_0, v_0\in\Z_p$. Reducing the equations $du_0^2=f(x_0)$ and  $dv_0^2=g(x_0)$ modulo $p$, we conclude that $f$ and $g$ have a common root modulo $p$, and therefore $p$ divides the resultant of $f$ and $g$. The prime divisors of $\Res(f,g)$ are 2, 5, and 17, so $d$ can only be divisible by these primes. It follows that
\[d\in S=\{\pm 1,\pm 2,\pm 5,\pm 10,\pm 17,\pm 34, \pm 85, \pm 170\}.\]
Thus, in order to determine $C(\Q)$ it suffices to determine $C_d(\Q)$ for every $d\in S$. Let $X$ and $Y$ be the hyperelliptic curves
\[X:\;y^2=f(x),\;\;\;Y:\; y^2=g(x),\]
and for every $d\in S$ let $X_d, Y_d$ denote the quadratic twists of $X, Y$ by $d$. Note that there are projection maps
\[C_d\to X_d,\;\; (x,u,v)\mapsto (x,u)\;\;\text{and}\;\;C_d\to Y_d,\;\; (x,u,v)\mapsto (x,v).\]
In particular this implies that $C_d(\Q)=\emptyset$ if $X_d(\Q)=\emptyset$ or $Y_d(\Q)=\emptyset$, and in any case the rational points on $C_d$ can be determined if $Y_d(\Q)$ is known. We will now determine $C_d(\Q)$ for every $d\in S$.

Consider first $d=1$. A search for points of small height on $Y$ yields the points $(1,\pm 4)$. A 2-descent shows that the rank of $\Jac(Y)$ is at most 1. The divisor class $[(1,4)-(1,-4)]$ has infinite order, so in fact the rank of $\Jac(Y)$ is 1. A Chabauty argument combined with a Mordell-Weil sieve now shows that $Y(\Q)=\{(1,\pm 4)\}$. Lifting these points to $C_1$ we conclude that
\[C_1(\Q)=\{(1,\pm 2,\pm 4)\}.\]
By a similar argument we obtain $C_{-1}(\Q)=\{(-1,\pm 2,\pm 4)\}$.

We claim that $C_d(\Q)=\emptyset$ for every $d\in S\setminus\{\pm 1\}$. To prove this we will mostly use local arguments to show that either $X_d(\Q)=\emptyset$ or $Y_d(\Q)=\emptyset$.

For $d=\pm 17$ we have $Y_d(\Q_{17})=\emptyset$, and for $d=\pm 85$, $Y_d(\Q_5)=\emptyset$. For $d\in\{\pm 2,\pm 10,\pm 34,\pm 170\}$ we find that $X_d(\Q_2)=\emptyset$. Finally, for $d=\pm 5$, a 2-cover descent calculation for the curve $Y_d$  returns an empty fake 2-Selmer set (in the terminology of \cite{bruin-stoll_2cover_descent}); hence $Y_d(\Q)=\emptyset$. 

To summarize, we have shown that $C_d(\Q)=\emptyset$ for every $d\in S$ except $d=\pm 1$. Thus, every affine rational point on $C$ lifts to a rational point on either $C_1$ or $C_{-1}$. Mapping the points $(1,\pm 2,\pm 4)\in C_1(\Q)$ and $(-1,\pm 2,\pm 4)\in C_{-1}(\Q)$ to $C(\Q)$ we conclude that $C(\Q)=\{(\pm 1,\pm 8)\}$.
\end{proof}

\begin{prop}\label{genus3_chabauty} Let $C$ be the hyperelliptic curve over $\Q$ defined by
\[y^2=f(x):=(x^3+4)(x^4 + 8x^3 + 16x^2 + 64).\]
The set of affine rational points on $C$ is $\{(0,\pm 16), (4,\pm 272)\}$.
\end{prop}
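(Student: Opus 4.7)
The plan is to adapt the quadratic-twist descent used in Proposition \ref{genus4_curve}. Write the defining equation as $y^2=f(x)\cdot g(x)$ with $f(x)=x^3+4$ and $g(x)=x^4+8x^3+16x^2+64$. Neither polynomial has a rational root (a quick check via the rational root theorem), so at any affine rational point $(x_0,y_0)$ on $C$ the values $f(x_0)$ and $g(x_0)$ are nonzero rationals whose product is a square, forcing them to share a common squarefree part $d\in\Q^{\ast}$. For each squarefree integer $d$ introduce the auxiliary curve
\[
C_d\colon\; du^2=f(x),\quad dv^2=g(x)
\]
in $\A^3=\Spec\Q[x,u,v]$; every affine rational point on $C$ then lifts to a point of $C_d(\Q)$ for some such $d$, the map back to $C$ being $(x,u,v)\mapsto(x,duv)$. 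A standard reduction argument modulo any prime $p\mid d$ forces $f$ and $g$ to share a root in $\F_p$, so $p\mid\Res(f,g)$; this restricts $d$ to a finite, explicit set $S$ of squarefree integers.

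For each $d\in S$ the curve $C_d$ projects to the two genus-one twists $X_d\colon du^2=f(x)$ and $Y_d\colon dv^2=g(x)$, and $C_d(\Q)=\emptyset$ whenever either projection has no rational points. The two values $d=1$ (from $(0,\pm 16)$) and $d=17$ (from $(4,\pm 272)$) must be treated positively: $X_1$ is the elliptic curve $y^2=x^3+4$, and $Y_d$ is a quartic model of a twist of the Jacobian of $Y_1$. For each of these I would compute the Mordell-Weil rank via 2-descent, and then either a rank-0 enumeration or, if necessary, a rank-1 Chabauty computation should pin down the rational points on the twist, which in turn determines the lifts in $C_d(\Q)$ and thus the four listed points on $C$. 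For every other $d\in S$ the goal is to rule out $C_d(\Q)$ altogether: the hope is a purely local obstruction, namely $X_d(\Q_p)=\emptyset$ or $Y_d(\Q_p)=\emptyset$ for some small prime $p$ (verified via \texttt{HasPointsEverywhereLocally}), and when that fails a fake 2-Selmer set computation (\texttt{TwoCoverDescent}) on the more tractable of the two twists.

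The main obstacle is the possibility of some $d\ne 1,17$ for which both twists are everywhere locally solvable and of positive rank; as with $d=\pm 5$ in the proof of Proposition \ref{genus4_curve}, one would then have to combine a Mordell-Weil sieve with either a 2-cover descent or a rank-1 Chabauty computation to prove emptiness. Once every $d\in S$ has been disposed of, assembling the lifts coming from $C_1(\Q)$ and $C_{17}(\Q)$ and pushing them down to $C$ yields precisely the claimed set $\{(0,\pm 16),(4,\pm 272)\}$ of affine rational points.
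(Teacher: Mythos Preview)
Your approach is genuinely different from the paper's. The paper applies Chabauty--Coleman directly to $C$ (genus $3$) at the prime $p=11$: a $2$-descent gives $\operatorname{rank} J(\Q)\le 1$, the class $D=[\infty-(0,16)]$ has infinite order so the rank is exactly $1$, and the two-dimensional annihilator $\Ann J(\Q)$ is computed explicitly from Coleman integrals of the standard basis differentials against $D$. Stra{\ss}mann bounds across the twelve residue disks of $C(\F_{11})$, using several linear combinations of the two annihilating differentials, sum to at most $6$; since $C$ has no affine rational Weierstrass point, $\#C(\Q)$ is odd, forcing $\#C(\Q)=5$.

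Your descent via the factorization is a plausible alternative---it trades one genus-$3$ Chabauty computation for a case analysis over genus-$1$ twists---but as written it has a gap. Both $X_d$ and $Y_d$ have genus $1$, so ``a rank-$1$ Chabauty computation'' on either of them is meaningless: Chabauty requires rank strictly less than genus, and a rank-$1$ elliptic curve has infinitely many rational points. If for some relevant $d$ (in particular $d=17$) neither twist has rank $0$, you would need either elliptic-curve Chabauty over a number field or a direct argument on the higher-genus curve $C_d$, and you have not checked which situation actually occurs. The paper's single Chabauty computation on $C$ itself avoids this uncertainty altogether.
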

\begin{proof}
A search for points on $C$ yields the five points $\infty, (0,\pm 16), (4,\pm 272)$.
We aim to show that $\# C(\Q)\le 5$, so that these are all the rational points on $C$. To prove this bound we will use a Chabauty-Coleman argument with the prime $p=11$, which is a prime of good reduction for $C$. We apply the method as described in \S\ref{hyperelliptic_chabauty_section}.

Let $J$ be the Jacobian variety of $C$. Our first step is to find generators for a subgroup of finite index in $J(\Q)$. By a 2-descent calculation we find that the rank of $J(\Q)$ is at most 1. The divisor class $D=[\infty-(0,16)]$ can be shown to have infinite order, so the rank is equal to 1, and $D$ generates a subgroup of finite index in $J(\Q)$.

Next we find a suitable basis for the annihilator of $J(\Q)$. Using the basis
\[\omega_0=(1/2y)dx,\; \omega_1=x\omega_0,\;\omega_2=x^2\omega_0\]

for the global differentials on $C$ and computing the entries of the matrix
\[M=\left(\int_{\infty}^{(0,16)}\omega_0,\;\int_{\infty}^{(0,16)}\omega_1,\; \int_{\infty}^{(0,16)}\omega_2\right)=(a,b,c),\]
we obtain
\begin{align*}
a&=8\cdot11^3 + O(11^5),\\
b&=10\cdot11 + 5\cdot11^3 + 5\cdot11^4 + O(11^5),\\
c&=3\cdot11 + 5\cdot11^2 + 6\cdot11^4  + O(11^5).
\end{align*}
The vectors $(b,-a,0)$ and $(0, -c,b)$ form a basis for the kernel of $M$, so Lemma \ref{annihilator_lem} implies that the differentials $b\omega_0-a\omega_1$ and $-c\omega_1+b\omega_2$ form a basis for $\Ann J(\Q)$. Scaling these differentials so that they will be nonzero modulo 11, we obtain a new basis for $\Ann J(\Q)$:
\[\alpha=\frac{1}{11}(b\omega_0-a\omega_1)\;\;\;\text{and}\;\;\;\beta=\frac{1}{11}(-c\omega_1+b\omega_2).\]

Next we use the differentials $\alpha$ and $\beta$ to find upper bounds for the number of rational points within each residue disk in $C(\Q_{11})$.

The points on the reduction of $C$ modulo 11 are the following:
\[C(\F_{11})=\{\infty, (0,\pm 5), (1,\pm 4), (4,\pm 3), (5,\pm 5), (6,0), (8,\pm 2)\}.\]

For each residue disk $\calR$ above one of these points we will choose a differential $\eta\in\Ann J(\Q)$ and compute an upper bound $N$ for the number of zeros of the function $\lambda_{\eta}(Q)=\int_{\infty}^Q\eta$ in $\calR$. Our goal is to choose $\eta$ in such a way that either $N=0$, in which case $C(\Q)\cap\calR=\emptyset$, or $N=1$ when we know that $\calR$ contains a rational point. By trial and error, choosing various linear combinations of $\alpha$ and $\beta$, we have found such a differential $\eta$ for every residue disk. We summarize the results of our calculations in Table \ref{chabauty3_table} below. The table contains, for every differential $\eta\in\{\alpha, \beta, 4\alpha+\beta, 7\alpha+\beta\}$ and every point $\tilde Q\in C(\F_{11})$, the computed upper bound for the number of zeros of the function $\lambda_{\eta}$ in the residue disk above $\tilde Q$.

\medskip
\begin{table}[h!]
\centering
\begin{tabular}{|c|c|c|c|c|c|c|c|} 
 \hline
& $\infty$ & $(0,\pm 5)$ & $(1,\pm 4)$ & $(4,\pm 3)$ & $(5,\pm 5)$ & $(6,0)$ & $(8,\pm 2)$ \\
 \hline
$\alpha$ & 5 & 1 & 1 & 1 & 1 & 1 & 1\\
 \hline
 $\beta$ & 1 & 2 & 1 & 1 & 1 & 1 & 0\\
 \hline
 $4\alpha+\beta$ & 1 & 1 & 1 & 1 & 0 & 1 & 1\\
 \hline
 $7\alpha+\beta$ & 1 & 1 & 0 & 1 & 1 & 1 & 1\\
 \hline
\end{tabular}
\bigskip
\caption{Bounds for rational points in each residue disk.}
\label{chabauty3_table}
\end{table}

For the residue disks $\calR$ containing a known rational point -- namely those above $\infty$, $(0,\pm 5),$ and $(4,\pm 3)$ -- we can conclude from the above data that $\#(\calR\cap C(\Q))=1$. Indeed, either $\alpha$ or $\beta$ provides an upper bound of 1 for the number of rational points in $\calR$.

For the residue disks $\calR$ above $(1,\pm 4)$, $(5,\pm 5)$, and $(8,\pm 2)$ we can conclude that $C(\Q)\cap\calR=\emptyset$, since at least one of the chosen differentials provides an upper bound of 0.

At this stage we know that $5\le\#C(\Q)\le 6$, since the data does not rule out rational points above $(6,0)$. However, $\#C(\Q)$ must be odd since $C$ has no affine rational Weierstrass point. Therefore $\#C(\Q)=5$, which completes the proof.

As an example, we include the details of one of the computations needed to fill in Table \ref{chabauty3_table}. For the reader interested in reproducing our results, the \textsc{Sage} code used for all of the calculations is available in \cite{chabauty_code}.

Let us show that integration of $\eta=7\alpha+\beta$ rules out rational points in the residue disk above the point $\tilde Q=(1,4)\in C(\F_{11})$. Lifting $\tilde Q$ to a point in $C(\Q_{11})$ we obtain the point
\[Q=(1,4 + 9\cdot 11 + 6\cdot 11^2 + 7\cdot 11^3 + 8\cdot 11^4 + O(11^5)).\]

A local parameter at $Q$ is $t=x-1$. Expanding $x$ and $y$ in powers of $t$ we have $x(t)=1+t$ and $y(t)=y_0+y_1t+y_2t^2+y_3t^3+O(t^4)$, where
\begin{align*}
y_0 &= 4 + 9 \cdot 11 + 6 \cdot 11^{2} + 7 \cdot 11^{3} + O(11^{4}),\\
y_1 &= 9 + 6 \cdot 11 + 7 \cdot 11^{3} + O(11^{4}),\\
y_2 &= 3 + 9 \cdot 11^{2} + 11^{3} + O(11^{4}),\\
y_3 &= 9 + 8 \cdot 11 + 2 \cdot 11^{2} + 9 \cdot 11^{3}+ O(11^{4}).\\
\end{align*}

\vspace{-5mm}
Expressing $\eta$ in the form $\eta=\frac{P(x)}{2y}dx$ and using the above expansions for $x$ and $y$, we find that $\eta=w(t)dt=(c_0+c_1t+c_2t^2+c_3t^3+O(t^4))dt$, where
\begin{align*}
c_0 &= 3 \cdot 11 + 2 \cdot 11^{2} + 8 \cdot 11^{3} + O(11^{4}),\\
c_1 &= 9 + 2 \cdot 11 + 3 \cdot 11^{2} + O(11^{4}),\\
c_2 &= 3 + 4 \cdot 11 + 10 \cdot 11^{2} + 2 \cdot 11^{3} + O(11^{4}),\\
c_3 &= 3 + 3 \cdot 11 + 4 \cdot 11^{2} + 7 \cdot 11^{3} + O(11^{4}).\\
\end{align*}

\vspace{-5mm}
Next, we construct the power series $F(t)=\int_{\infty}^Q\eta+I(pt)$, where $I(t)$ is the formal antiderivative of $w(t)$. We obtain $F(t)=b_0+b_1t+b_2t^2+O(t^3)$, where
\begin{align*}
b_0 &= 4 \cdot 11 + 5 \cdot 11^{2} + 3 \cdot 11^{3} + O(11^{5}),\\
b_1 &= 3 \cdot 11^{2} + 2 \cdot 11^{3} + 8 \cdot 11^{4} + O(11^{5}),\\
b_2 &= 10 \cdot 11^{2} + 6 \cdot 11^{3} + 11^{4} + O(11^{5}).\\
\end{align*}

\vspace{-5mm}
The bound \eqref{power_series_precision} is satisfied with $n=2$, so these coefficients are enough  to determine the Stra{\ss}mann bound for $F$. Clearly $b_0$ has the minimal valuation among the coefficients of $F$, and there is no other coefficient with equal valuation. Hence the Stra{\ss}mann bound is 0, as claimed.
\end{proof}

\begin{prop}\label{genus4_chabauty}
Let $C$ be the hyperelliptic curve over $\Q$ defined by
\[y^2 = (x^2 + 1)(x^2 - 2x - 1)(x^6 - 3x^4 - 16x^3 + 3x^2 - 1).\]
The set of affine rational points on $C$ is
\[\{(0, \pm 1), (\pm 1, \pm 8), (3, \pm 40), (-1/3, \pm 40/243)\}.\]
\end{prop}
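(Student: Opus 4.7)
The plan is to apply the Chabauty--Coleman method of \S\ref{hyperelliptic_chabauty_section} directly to $C$, which is a hyperelliptic curve of genus $g=4$ since the right-hand side of the defining equation is squarefree of degree $10$. A search for rational points yields the eight listed affine points together with two points at infinity $\infty_+, \infty_-$ (the leading coefficient of $h(x)$ is a square), so $C(\Q)$ contains at least $10$ points, and the goal is to prove the matching upper bound.

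First, I would determine the rank $r$ of $J(\Q)$, where $J$ is the Jacobian of $C$. An upper bound would be obtained from a two-descent via \texttt{RankBound} in \textsc{Magma}, while a lower bound would come from divisor classes of the form $[P_i - \infty_+]$ built from the known rational points, checking independence by pairing against the basis of regular $1$-forms $\omega_j = x^{j-1}/(2y)\,dx$ as in Lemma~\ref{annihilator_lem}. The key expected outcome is $r \le 3$, so that $\Ann J(\Q)$ has dimension at least $1$ and Chabauty gives nontrivial constraints.

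Assuming $r \le 3$, I would choose a prime $p$ of good reduction, avoiding $p=3$ because of the denominator in $(-1/3, \pm 40/243)$, and follow the workflow of \S\ref{hyperelliptic_chabauty_section}: compute a basis of $\Ann J(\Q)$ from the matrix of Coleman integrals $M_{ij}=\lambda_{\omega_j}(D_i)$ using the Balakrishnan--Bradshaw--Kedlaya algorithm; enumerate the residue disks indexed by $C(\F_p)$; for each disk $\mathcal R$ choose $\eta\in\Ann J(\Q)$ reducing to a nonzero differential on $C/\F_p$; expand $\eta=w(t)\,dt$ in the local parameter $t$; form $F(t)=\int_P^Q\eta+I(pt)$; and apply Stra\ss mann's Theorem to bound $\#(\mathcal R\cap C(\Q))$. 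As in Proposition~\ref{genus3_chabauty}, I would rotate through several $\Q_p$-linearly independent combinations of basis differentials, record the Stra\ss mann bound for each (disk, differential) pair in a table, and verify that the minimal bounds sum to exactly $10$.

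The main obstacles are twofold. The workflow of \S\ref{hyperelliptic_chabauty_section} is written for odd-degree models, whereas here $\deg h=10$; a minor adaptation is required at the two infinite residue disks, where one uses a local parameter of the form $t=y/x^{g+1}-1$ (or $+1$) rather than $t=y/x^{g+1}$ to distinguish $\infty_+$ from $\infty_-$. More seriously, if $r$ turns out to equal $3$ then $\Ann J(\Q)$ is only one-dimensional, so there is little flexibility in choosing $\eta$; certain disks may then yield Stra\ss mann bounds strictly larger than required, in which case a Mordell--Weil sieve (or a parallel Chabauty calculation at a second prime) must be invoked to eliminate the extraneous candidate zeros. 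As a contingency, should $r=4$, direct Chabauty fails; one would then revert to a descent strategy based on the factorization $h=(x^2+1)(x^2-2x-1)\bigl((x^2-1)^3-16x^3\bigr)$ coming from the identity $x^6-3x^4-16x^3+3x^2-1=(x^2-1)^3-16x^3$, with squarefree parts constrained by the resultants of the factors, and reduce the problem to rational points on a finite collection of conics and genus-$2$ curves treated by the methods already used in Lemmas~\ref{A5_curve_lem}--\ref{B4_curve_lem}.
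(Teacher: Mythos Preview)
Your overall strategy matches the paper's, but there are two concrete issues, one minor and one substantive.

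First, a miscount: the set $\{(0,\pm 1),(\pm 1,\pm 8),(3,\pm 40),(-1/3,\pm 40/243)\}$ contains ten affine points, not eight (the notation $(\pm 1,\pm 8)$ stands for four points), so together with the two points at infinity there are twelve known rational points, not ten.

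Second, and more importantly, you are missing the key idea that closes the argument. In the paper the rank turns out to be exactly $2$ (not merely $\le 3$), so $\Ann J(\Q)$ is two-dimensional; nevertheless, the Chabauty--Coleman computation at $p=13$ only yields $\#C(\Q)\le 14$, not $12$. Two residue disks (over $(2,0)$ and over $\infty$ on the odd-degree model) each contribute a Stra{\ss}mann bound of $1$ that cannot be reduced to $0$ by any choice of annihilating differential. The paper bridges this gap not by a Mordell--Weil sieve or a second prime, but by observing that $C$ carries the order-$4$ automorphism $\tau(x,y)=(-1/x,\,y/x^5)$, whose square is the hyperelliptic involution; since none of $\tau,\sigma,\sigma\tau$ has a rational fixed point, $\#C(\Q)$ must be divisible by $4$, forcing $14$ down to $12$. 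Without this symmetry argument (or an equivalent one) your plan would stall at the verification step where you hope the bounds ``sum to exactly'' the target.

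A smaller methodological difference: rather than adapt the local parameter at the two infinite disks of the even-degree model, the paper applies a M\"obius change of variables over $\Q_{13}$ sending one of the Weierstrass points $x=\pm i$ to infinity, producing an odd-degree model $X/\Q_{13}$ to which the Balakrishnan--Bradshaw--Kedlaya implementation applies directly. This is worth knowing since it avoids writing any new code for the infinite disks.
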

\begin{proof}
We claim that the number of rational points on $C$ must be a multiple of 4. Let $\sigma$ denote the hyperelliptic involution on $C$. Note that $C$ has an automorphism $\tau$ given by $\tau(x,y)=(-1/x,y/x^5)$. Since $\tau^2=\sigma$, $\tau$ has order 4. Neither $\tau$, $\sigma$, nor $\sigma\tau$ have rational fixed points, so the orbit of every point in $C(\Q)$ under $\tau$ must consist of exactly four points. This implies our claim. 

We will now use the method of Chabauty and Coleman to show that $\#C(\Q)\le 14$ and therefore, in view of the above, $\#C(\Q)\le 12$. Since we already know 12 rational points on $C$ (namely the two points at infinity and the ten affine points listed above), this will prove that $\#C(\Q)= 12$, from which the proposition follows.

For the Chabauty argument we use the prime $p=13$. Given that $C$ is not defined by a model of odd degree, we begin by making a change of variables in order to be able to apply the Coleman integration algorithms from \cite{balakrishnan-bradshaw-kedlaya}. Let $i$ denote a square root of $-1$ in the field $\Q_{13}$. The change of variables

\[(x,y)\mapsto\left(\frac{32-96i}{x+i},\frac{y(32-96i)^4}{(x+i)^5}\right)\]

induces an isomorphism $\phi:C\to X$ (defined over $\Q_{13}$), where $X$ is a hyperelliptic curve over $\Q_{13}$ defined by an equation of the form
\[y^2=x^9 + (144i + 432)x^8+\cdots+2^{44}(336i-527).\]

The map $\phi$ gives rise to a pullback isomorphism
\[\phi^{\ast}: H^0(X,\Omega_{X/\Q_{13}}^1)\to H^0(C,\Omega_{C/\Q_{13}}^1).\]

To simplify notation we will henceforth write $\omega^{\ast}$ instead of $\phi^{\ast}(\omega)$. 

Using the model $X$ and the integration algorithms from \cite{balakrishnan-bradshaw-kedlaya} we can compute Coleman integrals on $C$. Indeed, by the change of variables formula (see \cite[Thm. 2.7]{coleman_integrals}) we have
\[\int_P^Q\omega^{\ast}=\int_{\phi(P)}^{\phi(Q)}\omega\]

for all points $P,Q\in C(\Q_p)$ and every global differential $\omega$ on $X$. With this observation in mind, we proceed to find an upper bound for $\#C(\Q)$.

Let $J$ be the Jacobian variety of $C$. A 2-descent computation shows that the rank of $J(\Q)$ is at most 2. Since the genus of $C$ is 4, this guarantees that $\Ann J(\Q)$ is nontrivial. We will now find a basis for the annihilator.

Note that the two rational points at infinity on $C$ are mapped by $\phi$ to the points $(0,\pm (32-96i)^4)$ on $X$. Let $B=(0,(32-96i)^4)\in X(\Q_{13})$ and $A=\phi^{-1}(B)\in C(\Q)$.

Let $\omega_0,\ldots, \omega_3$ be the differentials on $X$ given by $\omega_i=(x^i/2y)dx$. Computing the entries of the $2\times 4$ matrix
\[
M=\begin{pmatrix}
\int_A^{(0,1)}\omega_0^{\ast} & \int_A^{(0,1)}\omega_1^{\ast} & \int_A^{(0,1)}\omega_2^{\ast} & \int_A^{(0,1)}\omega_3^{\ast}\\
\int_A^{(1,8)}\omega_0^{\ast} & \int_A^{(1,8)}\omega_1^{\ast} & \int_A^{(1,8)}\omega_2^{\ast} & \int_A^{(1,8)}\omega_3^{\ast}\\
\end{pmatrix}
\]

we find that the 13-adic valuations of the entries of $M$ are $2,1,1,1$ in the first row and $1,1,1,3$ in the second row. It follows that the rows of $M$ are linearly independent, and therefore $M$ has rank 2.

The fact that the entries of $M$ are nonzero implies that the divisor classes $D_1=[(0,1)-A]$ and $D_2=[(1,8)-A]$ have infinite order in $J(\Q)$. Moreover, since $M$ has rank 2, $D_1$ and $D_2$ must be linearly independent over $\Z$. (Any relation of the form $mD_1=nD_2$ for nonzero integers $m$ and $n$ would imply that the rows of $M$ are linearly dependent.) We can therefore conclude that $D_1$ and $D_2$ generate a subgroup of finite index in $J(\Q)$.

Since $M$ has rank 2, the kernel of $M$ has dimension 2. Computing a basis for the kernel we obtain the vectors $v=(1,0,a,b)$ and $w=(0,1,c,d)$, where
\begin{align*}
a &= 3 + 6 \cdot 13 + 3 \cdot 13^{2} + 6 \cdot 13^{3} + O(13^{4}),\\
b &= 11 + 11 \cdot 13 + 11 \cdot 13^{2} + 6 \cdot 13^{3} + O(13^{4}),\\
c &= 3 + 13 + 8 \cdot 13^{2} + 8 \cdot 13^{3} + O(13^{4}),\\
d &= 9 \cdot 13 + 5 \cdot 13^{2} + 8 \cdot 13^{3} + O(13^{4}).\\
\end{align*}

\vspace{-5mm}
By Lemma \ref{annihilator_lem}, the differentials 
\[\alpha^{\ast}:=\omega_0^{\ast}+a\omega_2^{\ast}+b\omega_3^{\ast}\;\;\;\text{and}\;\;\;\beta^{\ast}:=\omega_1^{\ast}+c\omega_2^{\ast}+d\omega_3^{\ast}\]

form a basis for the annihilator of $J(\Q)$. Thus we have
\begin{equation}\label{genus4_chabauty_annihilator}
\int_{B}^{\phi(P)}\alpha\;=\;\int_{B}^{\phi(P)}\beta\;=\;0\;\;\text{for all}\;\;P\in C(\Q).
\end{equation}

Let $S$ denote the set $\phi(C(\Q))$. We aim to show that $\#S\le 14$. For every global differential $\eta$ on $X$ we define a function $\lambda_{\eta}:X(\Q_{13})\to \Q_{13}$ by the formula $\lambda_{\eta}(Q)=\int_{B}^Q\eta$. By \eqref{genus4_chabauty_annihilator}, if $\eta$ is a linear combination of $\alpha$ and $\beta$, then $\lambda_{\eta}$ vanishes on $S$.

For every residue disk $\calR\subset X(\Q_{13})$ we will choose a differential $\eta$ on $X$, constructed as a linear combination of $\alpha$ and $\beta$, and determine an upper bound for the number of zeros that the function $\lambda_{\eta}$ has in $\calR$. Since $X$ is hyperelliptic and is given by a model of odd degree, a bound can be computed as discussed in \S\ref{hyperelliptic_chabauty_section}. Let $N_{\calR,\eta}$ denote the bound thus obtained.

We summarize the results of our calculations below. The code used for all of these computations is available in \cite{chabauty_code}.

Mapping the 12 known rational points on $C$ into $X$ using the map $\phi$ and then reducing modulo 13, we obtain the set
\[\tilde S=\{(0,\pm 4),(4,\pm 6),(5,\pm 2),(8,\pm 4),(9,\pm 6),(12,\pm 3)\}.\]

The set of $\F_{13}$-points on the reduction of $X$ is given by

\[X(\F_{13})=\tilde S\cup\{(2,0),(6,\pm 3),(11,\pm 2),\infty\}.\]

For every residue disk $\calR$ above a point in $\tilde S$ we compute $N_{\calR,\alpha}=1$. Hence each one of these 12 residue disks contains exactly one point in $S$.

For the residue disks above $\infty$ and above $(2,0)$ we also obtain $N_{\calR,\alpha}=1$. Hence each one of these disks contains at most one point in $S$.

Finally, for the residue disks above $(6,\pm 3)$ we find that $N_{\calR,\alpha+5\beta}=0$, and for those above $(11,\pm 2)$ we obtain $N_{\calR,\alpha+\beta}=0$. Hence these residue disks do not intersect $S$.

We conclude that $\#S\le 14$ and therefore $\#C(\Q)\le 14$, as desired.
\end{proof}

\begin{prop}\label{genus6_descent}
Let $C$ be the hyperelliptic curve over $\Q$ defined by the equation $y^2=f(x)\cdot g(x)$, where
\begin{align*}
f(x) &= (x^4-1)(x^2 - 2x - 1),\\
g(x) &= x^8 - 8x^5 + 2x^4 + 8x^3 + 1.
\end{align*}
The set of affine rational points on $C$ is $\{(\pm 1,0), (0,\pm 1)\}$.
\end{prop}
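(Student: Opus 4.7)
The plan is to carry out an explicit $2$-covering descent on $C$, exactly mirroring the argument used for Proposition~\ref{genus4_curve}. I would begin by factoring $f(x) = (x-1)(x+1)(x^2+1)(x^2-2x-1)$, noting that $\pm 1$ are the only rational roots of $f$ and give the rational points $(\pm 1, 0)$. One checks that $g(x)$ has no rational root (by the rational root theorem, since $g$ is monic with constant term $1$) and that $f$ and $g$ are coprime in $\Q[x]$. Hence for any other affine rational point $(x_0, y_0) \in C(\Q)$, both $f(x_0)$ and $g(x_0)$ are nonzero rationals whose product is a square, so they share a common squarefree part $d \in \Z$. Writing $f(x_0) = d u_0^2$ and $g(x_0) = d v_0^2$, we obtain a lift $(x_0, u_0, v_0)$ to the descent curve
\[
C_d \subset \A^3 \;:\; \begin{cases} d u^2 = f(x), \\ d v^2 = g(x), \end{cases}
\]
with $y_0 = d u_0 v_0$.

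Next I would restrict $d$: for any prime $p \mid d$, the usual $p$-adic integrality argument forces $x_0, u_0, v_0 \in \Z_p$, and reducing the defining equations modulo $p$ yields a common root of $f$ and $g$ in $\F_p$, so $p \mid \Res(f, g)$. A direct resultant computation gives $\Res(f, g) = 2^{12}\cdot 5^2\cdot 17$, so $d$ lies in the finite set
\[
S = \{\pm 1,\, \pm 2,\, \pm 5,\, \pm 10,\, \pm 17,\, \pm 34,\, \pm 85,\, \pm 170\}.
\]

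For each $d \in S$ I would analyze $C_d(\Q)$ through the two projection maps onto the quadratic twists $X_d : dy^2 = f(x)$ (a genus-$2$ curve) and $Y_d : dy^2 = g(x)$ (a genus-$3$ curve); clearly $C_d(\Q) = \emptyset$ whenever $X_d(\Q) = \emptyset$ or $Y_d(\Q) = \emptyset$. For $d = 1$, a rank bound on $\Jac(Y_1)$ combined with a Chabauty argument (feasible provided the rank is less than $3$) should show that the only rational points on $Y_1$ are the lifts of $(0, \pm 1)$, pulling back to $C_1(\Q) = \{(0, \pm 1, \pm 1)\}$ and thence to $\{(0, \pm 1)\} \subset C(\Q)$. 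For each other $d \in S$, I would first test local solvability of $X_d$ and $Y_d$ at the primes in $\{2, 5, 17, \infty\}$; by analogy with Proposition~\ref{genus4_curve} I expect most $d$'s to be eliminated by an obstruction at $2$ or $17$ or by a real sign obstruction. Any remaining $d$'s would be dispatched by an explicit two-cover descent on the easier of $X_d$ or $Y_d$ (via \textsc{Magma}'s \texttt{TwoCoverDescent}), anticipating an empty fake $2$-Selmer set as in the $d = \pm 5$ case of Proposition~\ref{genus4_curve}.

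The principal obstacle is handling any $d \in S$ that turn out to be everywhere locally solvable and for which a two-cover descent is inconclusive; in the worst case one would need to combine two-cover descent with a Mordell--Weil sieve, or exhibit explicit generators of $\Jac(Y_d)(\Q)$ and run a Chabauty--Coleman analysis of the sort developed in Section~\ref{rational_points_section}. The reward of the descent is the systematic reduction of a genus-$6$ problem on $C$ to a short list of genus-$2$ and genus-$3$ problems, each of which is tractable by the methods reviewed there.
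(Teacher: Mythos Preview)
Your descent framework matches the paper's, but you take the harder of the two available projections at the crucial step. For $d=1$ the paper projects onto $X:y^2=f(x)$, a genus-$2$ curve whose Jacobian has rank~$0$; the rational points $(\pm1,0),(0,\pm1)$ are then read off immediately, and since $x_0\ne\pm1$ one concludes $x_0=0$. You instead propose Chabauty on the genus-$3$ curve $Y_1:y^2=g(x)$, which is substantially harder and, as stated, slightly off: $Y_1(\Q)$ also contains $(\pm1,\pm2)$ (since $g(\pm1)=4$), not just the points over $x=0$. Those extra points do not lift to new points of $C$, so your argument could be repaired, but there is no need to work that hard.

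For the remaining $d$'s the paper is also simpler than you anticipate. Since $g(x)>0$ for all real $x$, one has $d>0$, cutting your set $S$ in half. Then every $d\in\{2,5,10,34,85,170\}$ is killed by $Y_d(\Q_2)=\emptyset$, and $d=17$ by $Y_d(\Q_3)=\emptyset$; no two-cover descent or Mordell--Weil sieve is required anywhere. So your plan would work, but the paper's choices (project to $X$ for $d=1$; use positivity of $g$ and local tests at $2$ and $3$) collapse the argument to a few lines.
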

\begin{proof} For every squarefree integer $d$, let $C_d\subset\A^3=\Spec\Q[x,u,v]$ be the curve
\[
C_d:\;\begin{cases}
\;du^2=f(x),\\
\;dv^2=g(x).
\end{cases}
\]

Note that there is a map $C_d\to C$ given by $(x,u,v)\mapsto (x, duv)$. Suppose that $(x_0,y_0)$ is an affine rational point on $C$ with $x_0\ne\pm 1$. Then $f(x_0)$ and $g(x_0)$ are nonzero and $y_0^2=f(x_0)g(x_0)$. Thus $f(x_0)$ and $g(x_0)$ have the same squarefree part, say $d$, so there exist $u_0,v_0\in\Q$ such that $(x_0,u_0,v_0)\in C_d(\Q)$. 

The prime divisors of $\Res(f,g)$ are $2,5$, and 17, so $d$ can only be divisible by these primes. Furthermore, $g(x)$ only takes positive values for $x\in\R$, so $d$ must be positive. Therefore
\[d\in S=\{1,2,5,10,17,34,85,170\}.\]

Let $X$ and $Y$ be the hyperelliptic curves $y^2=f(x)$ and $y^2=g(x)$, respectively. We denote by $Y_d$ the quadratic twist of $Y$ by $d$. Since $dv_0^2=g(x_0)$, the curve $Y_d$ has a rational point. Now, for $d\in S\setminus\{1\}$ we find that $Y_d(\Q_2)=\emptyset$ if $d\ne 17$, and $Y_d(\Q_3)=\emptyset$ if $d=17$. Hence we must have $d=1$.

Since $d=1$, then $u_0^2=f(x_0)$, so $x_0$ is the first coordinate of a rational point on $X$. Now, the Jacobian of $X$ has rank 0; this allows us to show that the affine rational points on $X$ are $(\pm 1, 0)$ and $(0,\pm 1)$. Since we are assuming $x_0\ne\pm 1$, this implies that $x_0=0$. Solving $y_0^2=f(0)g(0)$ we obtain $y_0=\pm 1$ and therefore $(x_0,y_0)=(0,\pm 1)$.
\end{proof}

\begin{prop}\label{genus8_descent_chabauty}
Let $C$ be the hyperelliptic curve over $\Q$ defined by the equation $y^2=f(x)\cdot g(x)$, where
\begin{align*}
f(x) &= (x^2+1)(x^8 - 8x^5 + 2x^4 + 8x^3 + 1),\\
g(x) &= (x^2 - 2x - 1)(x^6 - 3x^4 - 16x^3 + 3x^2 - 1).
\end{align*}
The set of affine rational points on $C$ is $\{(\pm1,\pm16), (0,\pm 1)\}$.
\end{prop}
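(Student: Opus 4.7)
I would follow the twist-descent pattern of Propositions \ref{genus4_curve} and \ref{genus6_descent}, closing the remaining cases by a Chabauty-Coleman computation in the spirit of Proposition \ref{genus3_chabauty}. The rational root theorem applied to $f$ and $g$ (both monic with constant term $\pm 1$) forces the only candidate rational roots to be $\pm 1$, and a direct evaluation shows that $f$ and $g$ have no rational roots at all. Consequently, for every affine rational point $(x_0, y_0) \in C(\Q)$ the values $f(x_0)$ and $g(x_0)$ are both nonzero rationals whose product is the square $y_0^2$; they therefore share a common squarefree part $d \in \Z$, and the point lifts to
\[
C_d \subset \A^3 : \quad du^2 = f(x), \quad dv^2 = g(x).
\]
The projections $C_d \to X_d$ and $C_d \to Y_d$ onto the quadratic twists $X_d : du^2 = f(x)$ (of genus $4$) and $Y_d : dv^2 = g(x)$ (of genus $3$) will provide the leverage for local obstructions.

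I would next restrict the possible $d$ to a finite list. The remark in the proof of Proposition \ref{genus6_descent} asserts that $x^8 - 8x^5 + 2x^4 + 8x^3 + 1 > 0$ for every real $x$; combined with $x^2 + 1 > 0$, this gives $f > 0$ on $\R$ and hence $d > 0$. Every prime dividing $d$ must also divide $\Res(f, g)$, and computing this resultant produces a finite set $S$ of admissible $d$. The known affine rational points realise $d = 1$ (at $x_0 = 0$) and $d = 2$ (at $x_0 = \pm 1$). For every other $d \in S$, I would seek a prime $p$ with $X_d(\Q_p) = \emptyset$ or $Y_d(\Q_p) = \emptyset$, using the local-solvability routines of \textsc{Magma}; if local testing alone fails to eliminate some $d$, a two-cover descent on $Y_d$ (as in the proof of Proposition \ref{genus4_curve}) would serve as a back-up.

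For the surviving cases $d \in \{1, 2\}$, the projection $C_d \to Y_d$ reduces the problem to determining the rational points on the genus-$3$ hyperelliptic curves $Y : v^2 = g(x)$ and $Y_2 : 2v^2 = g(x)$. After bounding the Mordell-Weil ranks of their Jacobians by $2$-descent and searching for independent rational divisors to pin down the rank exactly, I would execute an explicit Chabauty-Coleman argument at a prime of good reduction, following the template of Proposition \ref{genus3_chabauty}, to produce a sharp upper bound on the number of rational points. Pulling the resulting rational points back along $C_d \to Y_d$ and checking that the $u$-coordinate is rational recovers exactly the eight points $(0, \pm 1)$ and $(\pm 1, \pm 16)$. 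The main obstacle is the Chabauty step for $Y_2$: quadratic twisting can raise the rank of the Jacobian, and if the rank matches the genus then classical Chabauty no longer suffices; one would then need to combine Coleman integration with a Mordell-Weil sieve (the \texttt{Chabauty} function in \textsc{Magma}) to cut the candidate set of residue disks down to the known points.
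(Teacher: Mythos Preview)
Your descent framework matches the paper's: restrict to squarefree $d>0$ supported on the primes $2,5,17$ dividing $\Res(f,g)$, kill most $d$ by local obstructions (the paper uses $X_d(\Q_7)=\emptyset$ for $d\in\{5,10,17,34\}$, $Y_d(\Q_5)=\emptyset$ for $d=170$, and a two-cover descent on $Y_{85}$), and then treat the survivors $d\in\{1,2\}$. The genuine divergence is in this last step. You propose to run Chabauty--Coleman directly on the genus-$3$ curves $Y$ and $Y_2$; the paper instead exploits the symmetry $x\mapsto -1/x$ of $g$ to produce a degree-$2$ quotient map
\[
\phi:Y\dashrightarrow H,\qquad (x,y)\longmapsto\Bigl(\tfrac{x^2-1}{2x},\,\tfrac{y(x^2+1)}{8x^3}\Bigr),
\]
onto the genus-$2$ curve $H:t^2=s^6-s^5+s^4-3s^3+2s^2-2s+2$. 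Both $\Jac(H)$ and $\Jac(H_2)$ have rank $1$, so \textsc{Magma}'s automated genus-$2$ Chabauty disposes of them immediately; pulling back the points of $H$ and $H_2$ then yields $x_0\in\{\pm 1\}$ (after excluding $x_0=0$, where $\phi$ is undefined). Your route is viable in principle but trades a clean reduction for a bespoke genus-$3$ computation whose success hinges on rank bounds you have not verified; the paper's quotient trick sidesteps that uncertainty entirely. (Minor slip: the listed affine points number six, not eight.)
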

\begin{proof}
Suppose that $(x_0,y_0)$ is a rational point on $C$ with $x_0\ne 0$, and let $d$ be the common squarefree part of $f(x_0)$ and $g(x_0)$. The prime divisors of $\Res(f,g)$ are 2, 5, and 17. Moreover, $f(x)$ only takes positive values for $x\in\R$, so $d$ must be positive. Therefore
\[d\in S=\{1,2,5,10,17,34,85,170\}.\]

Let $X$ and $Y$, respectively, denote the hyperelliptic curves defined by $y^2=f(x)$ and $y^2=g(x)$.

For $d=5, 10, 17$, and $34$ we find that $X_d(\Q_7)=\emptyset$, and for $d=170$ we have $Y_d(\Q_5)=\emptyset$. For $d=85$ the curve $Y_d$ has an empty fake 2-Selmer set, so $Y_d(\Q)=\emptyset$. Thus $d=1$ or 2.

Suppose that $d=1$, so that there exists $z_0\in\Q$ such that $(x_0,z_0)\in Y(\Q)$. Let $H$ be the hyperelliptic curve defined by
\[t^2=s^6 - s^5 + s^4 - 3s^3 + 2s^2 - 2s + 2.\]

There is a map $\phi:Y\dashedrightarrow H$ given by
\[\phi(x,y)=\left(\frac{x^2-1}{2x},\frac{y(x^2+1)}{8x^3}\right).\]

Since $x_0\ne 0$, $\phi$ is defined at the point $(x_0,z_0)$, and $\phi(x_0,z_0)\in H(\Q)$. Now, the Jacobian of $H$ has rank 1, and  a Chabauty computation shows that $H$ has only one affine rational point, namely $(1,0)$. Pulling this point back to $Y$ we obtain no rational point on $Y$, which is a contradiction.

The only remaining possibility is that $d=2$. In this case there exists $z_0\in\Q$ such that $(x_0,z_0)\in Y_2(\Q)$. The same map $\phi$ defined above maps $Y_2$ to the quadratic twist $H_2$. The Jacobian of $H_2$ has rank 1, and a Chabauty computation shows that the only rational points on $H_2$ are $(0,\pm 1)$ and $(1,0)$. Pulling these points back to $Y_2$ we obtain the points $(\pm1, \pm 4)$. We must therefore have $x_0=\pm 1$, and the proposition follows.
\end{proof}

\subsection*{Acknowledgements} The author would like to thank Joseph Wetherell for sharing \textsc{Sage} code used in Chabauty-Coleman calculations, and Imme Arce for her help in preparing the figures.

\appendix
\section{Galois group data}\label{galois_data_appendix}

We define here the groups appearing in our main result, Theorem \ref{phi4_factorization_galois_thm}. Every group is presented as a subgroup of the symmetric group $S_{12}$ by specifying generators for the group. Further information about these groups can be accessed by searching the \textsc{Small Groups} library \cite{small_groups}, which is distributed with the \textsc{Gap} and \textsc{Magma} computer algebra systems. Every group in this library is identified by a pair $(o, n)$ where $o$ is the order of the group; for each group listed below we provide the corresponding identification pair.

\subsection*{\underline{Group $\calW$}} \;\; ID: $(384, 5557)$. Generators:
\begin{align*}
&(1,6,11,2,7,12,3,8,9,4,5,10),\\
&(1,7,4,6,3,5,2,8)(9,10,11,12).
\end{align*}

\subsection*{\underline{Group $\calG$}} \;\; ID: $(192, 944)$. Generators:
\begin{align*}
&(5, 8, 7, 6)(9, 12, 11, 10),\\
&(1, 3)(2, 4)(5, 10, 7, 12)(6, 11, 8, 9), \\
&(1, 12, 7)(2, 9, 8)(3, 10, 5)(4, 11, 6).
\end{align*}

\subsection*{\underline{Group $\calH$}} \;\;ID: $(128, 490)$. Generators:
\begin{align*}
&(1, 10, 4, 9, 3, 12, 2, 11),\\
&(1, 9, 3, 11)(2, 10, 4, 12),\\
&(1, 11, 4, 10, 3, 9, 2, 12)(5, 8, 7, 6).
\end{align*}    

\subsection*{\underline{Group $\calI$}} \;\; ID: $(64, 101)$. Generators:
\begin{align*}
&(1, 4, 3, 2), \\
&(9, 10, 11, 12), \\
&(1, 10, 4, 9, 3, 12, 2, 11), \\
&(1, 10)(2, 11)(3, 12)(4, 9)(5, 7)(6, 8).
\end{align*}

\bibliography{ref_list}
\bibliographystyle{amsplain}
\end{document}